\newtheorem{theorem}{Theorem}[section]
\newtheorem{lemma}[theorem]{Lemma}
\theoremstyle{definition}
\newtheorem{definition}[theorem]{Definition}
\theoremstyle{remark}
\newtheorem{remark}[theorem]{Remark}
\numberwithin{equation}{section}
\newtheorem{proposition}[theorem]{Proposition}
\newtheorem{corollary}[theorem]{Corollary}
\def\a{\alpha}
\def\e{\epsilon}
\def\R{{\Bbb R}}  
\def\N{{\Bbb N}}  
\def\Z{{\Bbb Z}}  
\def\Q{{\Bbb Q}}  
\def\H{{\Bbb H}}
\let\cal=\mathcal
 \def \e {{\varepsilon}}
 \def \d {{\delta}}
 \def \a {{\alpha}}
 \def \ba {\begin{array}}
 \def \ea {\end{array}}
 \def \cC {{\cal C}}
 \def \cF {{\cal F}}
 \def \cG {{\cal G}}
 \def \cH {{\cal H}}
 \def \cN {{\cal N}}
 \def \cP {{\cal P}}
 \newcommand{\be}{\begin{equation}}
 \newcommand{\ee}{\end{equation}}
\newcommand{\bea}{\begin{eqnarray}}
 \newcommand{\eea}{\end{eqnarray}}
\begin{document}

 \title[Aperiodic complexities, dimensions and Diophantine approximation]
{ The aperiodic complexities and connections to dimensions and Diophantine approximation }


\author{Viktor Schroeder}
\address{Mathematisch-naturwissenschaftliche Fakult\"at, Universit\"at Z\"urich, Winterthurerstrasse 190, 8057 Z\"urich, Switzerland}
\curraddr{}
\email{viktor.schroeder@math.uzh.ch}
\thanks{}

\author{Steffen Weil}
\address{School of Mathematical Sciences, Tel Aviv University, Tel Aviv 69978, Israel}
\curraddr{}
\email{steffen.weil@math.uzh.ch}
\thanks{}


\subjclass[2000]{11J83; 11K60; 37C45; 37D40} 

\date{}

\begin{abstract}
In their earlier work (Ergodic Th. Dynam. Sys., $34$: $1699 -1723$, $10$ $2014$), the authors introduced the so called $F$-aperiodic orbits of a dynamical system on a compact metric space $X$, which satisfy a quantitative condition measuring its recurrence and aperiodicity.
Using this condition we introduce two new quantities $\cF$, $\cG$, called the \emph{aperiodic complexities}, 
of the system and establish relations between $\cF$, $\cG$ with the topology and geometry of $X$.
We compare them to well-know complexities such as the box-dimension and the topological entropy.
Moreover, we connect our condition to the distribution of periodic orbits and  
we can classify an $F$-aperiodic orbit of a point $x$ in $X$ in terms of the collection of the introduced approximation constants of $x$.
Finally, we discuss our results for several examples, in particular for the geodesic flow on hyperbolic manifolds.
For each of our examples there is a suitable model of Diophantine approximation and we classify $F$-aperiodic orbits in terms of Diophantine properties 
of the point $x$. 
As a byproduct, we prove a `metric version' of the closing lemma in the context of CAT(-1) spaces.
\end{abstract}

\maketitle

\section{Introduction and main results}
Given a compact metric space $X=(X,d)$ and a continuous map $T: X \to X$,
there are various connections between the dynamics of the discrete dynamical system $(X,T)$ and the topology and (global) geometry of $X$. We refer to \cite{PesinClimenhaga} for a good reference on this topic.
We introduce two new quantities $\cF$, $\cG \geq 0$ which we call the \emph{aperiodic complexities} and which are defined as follows.
Denote by $\N_0 = \{0\} \cup \N$ and by $\N^* \equiv \N \cup \{\infty\}$.
Recall that the \emph{Bowen metric} $d_l$ of \emph{length} $l \in \N_0$ on $X$ is given by
\be
\label{BowenMetric}
	d_l(x, y) \equiv \max_{0\leq i \leq l}  d(T^ix, T^i y).
\ee
Clearly we have $d=d_0$ and $d_l$ induces the same topology as $d$.
Given a point $x \in X$ and $\e>0$, define its  \emph{return time} or \emph{shift time}
\be
\nonumber
	s_l(x, \e)  \equiv \inf \{ s \in \N : d_l(T^sx,x) < \e \} \in \N^*
\ee
with respect to the metric $d_l$.
To the  orbit $\cal{T}(x) \equiv \{ T^n(x) : n\geq 0\}$ of $x$,
assign the  \emph{shift function} $F_x^l: (0, \infty) \to \N$ 
defined by 
\be
\label{DefFx}
	F_x^l(\e) \equiv \min\{ s_l(T^nx, \e) : n\in \N_0 \} \in \N,
\ee
which exists by compactness of $X$. %
The orbit $\cal{T}(x)$ is called  \emph{$F_x^l$-aperiodic}.
Note that $F_x^l(\e) = 1$ for $\e> \text{diam}(X)$.

\begin{remark}
For a non-increasing function $F:(0, \infty) \to [1, \infty)$ the authors introduced the concept of  \emph{$F$-aperiodic} orbits $\cal{T}(x)$ in their earlier work \cite{SchroederWeil} (see also Definition \ref{DefAperiodic}).
Note that if $\cal{T}(x)$ is $F$-aperiodic, then it is also $F'$-aperiodic for $F'\leq F$, where here and in the following $F'\leq F$ if $F'(\e)\leq F(\e)$ for all $\e>0$. 
Therefore the function $F_x^l$ from above can be characterized as
\be
\nonumber
	F_x^l(\e) = \sup\{ F(\e) : F \in \text{AP}(x) \},
\ee
where $AP(x)$ is the set of functions $F$, such that the orbit $\cal{T}(x)$ is $F$-aperiodic for the dynamical system $(X, d_l, T)$. 
In this sense, $F_x^l$ is the optimal function measuring the aperiodicity of the orbit $\cal{T}(x)$.
\end{remark}

\noindent The function $F_x^l$ quantitatively measures the complexity of the \emph{whole} orbit in terms of the recurrence times (with respect to the metric $d_l$).
Clearly, if $x$ is a periodic point, then  $F_x^l$ is bounded by its period.
Moreover, it follows from Section \ref{Prelims} that, in a suitable setting,
we have for a generic point $x$ that $F_x^l$ is bounded (which yields $\cF_x=0$ below).

In the following we are interested in the special points with unbounded shift functions and positive exponential growth rates.
In this regard, note that each $F_x^l$ is non-increasing, whereas $F_x^m \leq F_x^l$ for $m \leq l$.
We then define the aperiodic complexities $\cF_x$ and $\cG_x$ of the orbit $\cal{T}(x)$  by
\be
\label{ExpRate}
	\cF_x \equiv \limsup_{\e \to 0} \frac{\log(F_x^0(\e))}{- \log(\e)}, \ \ \ \ \cG_x \equiv \lim_{\e \to 0} \limsup_{l \to \infty} \frac{\log(F_x^l(\e))}{l},%
\ee
and finally define the \emph{aperiodic complexities} of the system $(X,T)$ as
\be
\label{AperiodicComplexity}
	\cF = \cF(X,T) \equiv \sup_{x \in X} \cF_x, \ \ \ \ \cG=\cG(X,T) \equiv \sup_{x \in X} \cG_x.
\ee
Note that $\cF$ and $\cG$ satisfy certain properties which typically hold for dimensions, see Section \ref{SectionDimensions}. 
Moreover, the authors established positive lower bounds for $\cF$ and $\cG$ in \cite{SchroederWeil} for several examples; we recall the results of this work in Section \ref{Apps}.

However, it turns out that orbits $\cal{T}(x)$ with a positive exponential growth rate $\cF_x $, respectively $\cG_x$, 
(or even unbounded functions $F_x^l$) are extremely rare and proving their existence is delicate in most cases. 
The purpose of this paper is to show in addition that such an orbit turns out to be `special' in the following sense:

On the one hand, the existence of a complicated orbit requires `space'.
We make this intuition precise by showing that the  box-dimension of $X$ and respectively the topological entropy can be estimated from below by the aperiodic complexities.

On the other hand a very aperiodic orbit  turns out to be \emph{bounded} with respect to \emph{every} periodic point in a `uniform' sense:
 the orbit avoids a \emph{critical neighborhood} (defined via $F_x^l$) for every periodic point.
We will discuss the existence of aperiodic orbits for several examples in Section \ref{Apps}. 
In particular, we consider the geodesic flow on compact hyperbolic manifolds as a central example.
In these examples the critical neighborhoods and the distribution of periodic points can be related to a suitable setting of Diophantine approximation.
Moreover, we can classify the dynamics of the orbits with respect to the functions $F_x^l$  in terms of Diophantine properties of the point $x$.
In order to do so we need to establish suitable versions of the closing lemma for the respective dynamical systems.
This in particular leads to a `metric version' of the closing lemma in the context of CAT(-1) spaces (see Proposition \ref{ClosingLemma}).

More precisely, the first aim of this paper is to establish upper bounds for $\cF$ and $\cG$ 
in terms of well-known complexities of the space $X$ and the dynamical system $(X,T)$ in the spirit of the following results.

\begin{theorem} 
\label{ThmOne}
Let $\dim_B(X)$ and $h(X, T)$ denote the upper box dimension of $X$ and the topological entropy of $(X, T)$, respectively.   
Then
\be
\label{Thm1Bounds}
	\cF\leq \text{dim}_B(X), \ \ \ \ \cG \leq h(X, T).
\ee	
\end{theorem}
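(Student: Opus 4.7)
The plan is to observe that for any $x \in X$, $l \in \N_0$, and $\e>0$, the initial orbit segment $x, Tx, \ldots, T^{F_x^l(\e)-1}x$ forms a $d_l$-separated set of cardinality $F_x^l(\e)$ with separation at least $\e$. This should follow directly from the definitions: for indices $0 \leq i < j \leq F_x^l(\e) - 1$, by $T$-invariance of the Bowen metric we have $d_l(T^i x, T^j x) = d_l(T^i x, T^{j-i}(T^i x))$, and since $j - i < F_x^l(\e) \leq s_l(T^i x, \e)$, the definition of $s_l$ as an infimum forces this quantity to be at least $\e$.

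Once this observation is in place, both inequalities reduce to comparing the counting function $F_x^l$ with the maximal size of a $d_l$-separated subset. For the bound on $\cF$, I would specialize to $l=0$: the observation provides an $\e$-separated subset of $(X,d)$ of cardinality $F_x^0(\e)$, hence $F_x^0(\e) \leq N_\e(X)$, where $N_\e(X)$ is the maximal cardinality of an $\e$-separated subset of $X$. Taking $\log$, dividing by $-\log\e$, and passing to the $\limsup$ as $\e\to 0$ yields $\cF_x \leq \dim_B(X)$, using the standard characterization of the upper box dimension via maximal separated sets. Taking the supremum over $x$ gives $\cF \leq \dim_B(X)$.

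For the bound on $\cG$, let $s(l, \e)$ denote the maximal cardinality of a $d_l$-$\e$-separated subset of $X$; the key observation yields $F_x^l(\e) \leq s(l, \e)$. Then
\[
	\cG_x \; = \; \lim_{\e \to 0} \limsup_{l \to \infty} \frac{\log F_x^l(\e)}{l} \; \leq \; \lim_{\e \to 0} \limsup_{l \to \infty} \frac{\log s(l, \e)}{l},
\]
and the right-hand side coincides with $h(X,T)$ by the standard definition of topological entropy via Bowen $(n, \e)$-separated sets, modulo the harmless shift between $d_l$ and the length-$(l+1)$ Bowen metric, which does not affect the limit.

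I do not expect any substantive obstacle: the whole argument is driven by the one observation above, which is essentially a rewording of the definition of $F_x^l$ as the minimum over $n$ of $s_l(T^n x, \e)$. The only points requiring a bit of care are the normalization of the Bowen metric (the $\pm 1$ in the number of iterates) and matching the several equivalent definitions of box dimension and topological entropy, but none of these affect the asymptotic quantities.
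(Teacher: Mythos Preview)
Your proposal is correct and follows essentially the same approach as the paper: the paper proves the two inequalities as Propositions~\ref{ThmBoxDim} and~\ref{ThmTopEnt}, in each case by observing that the initial orbit segment $\{T^n x : 0 \le n \le F_x^l(\e)\}$ is $\e$-separated in $(X,d_l)$ and then passing to growth rates. Your version is in fact slightly cleaner with the indexing (using $0,\dots,F_x^l(\e)-1$ so that all differences are strictly less than $F_x^l(\e)$), and your remark that the ``$T$-invariance'' step is really just the tautology $T^j x = T^{j-i}(T^i x)$ is accurate.
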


\noindent  
We refer to Section \ref{SectionAperiodic} for definitions and proofs. Note that for the examples in Section \ref{Apps} we actually have equality in \eqref{Thm1Bounds} due to the results of \cite{SchroederWeil}, showing that these estimates are optimal.

In the special case that $M$
denotes a closed Riemannian manifold, consider its geodesic flow $\phi^t$ (or rather its time-one map) on the unit tangent bundle $SM$.
Then Manning \cite{Manning}
connects the \emph{volume entropy $\lambda$} of $M$ with the topological entropy $h(\phi^t)$ of the geodesic flow $\phi^t$ on $SM$ by
\be
\nonumber
	\lambda \leq h(\phi^t),
\ee
with equality in the case that $M$ has non-positive sectional curvature. 
In Sections \ref{SectionVolumeEntropy} and \ref{GeodResults} we provide further details and show the following, which is also a consequence of Theorem \ref{ThmOne} and Manning's result.

\begin{theorem}
\label{ThmIntro}
When $M$ is of non-positive curvature, then
\be
\nonumber
	\cG \leq \lambda
\ee
Equality holds in the case that $M$ is of constant non-positive curvature with injectivity radius $i_M> \log(2)$%
\footnote{ We however believe that the condition $i_M> \log(2)$ is not necessary. }
.
\end{theorem}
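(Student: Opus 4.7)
The plan is to split the theorem into its two assertions and treat them separately.

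For the upper bound $\cG \leq \lambda$, I would simply combine Theorem \ref{ThmOne} with Manning's theorem. Theorem \ref{ThmOne} already yields $\cG \leq h(\phi^1)$ for the time-one map of the geodesic flow on $SM$. Since $M$ is assumed non-positively curved, Manning's result gives the equality $\lambda = h(\phi^1)$ rather than just an inequality. Chaining these gives $\cG \leq h(\phi^1) = \lambda$, with no extra work required. I would also spend a sentence recalling the definition of the volume entropy $\lambda = \lim_{R\to\infty} \frac{1}{R}\log \mathrm{vol}(B_R(\tilde p))$ in the universal cover, so the statement is self-contained.

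For the equality in the constant curvature case, the task reduces to producing a point $x \in SM$ whose orbit realizes $\cG_x \geq \lambda$. My strategy would be to lift to the universal cover $\widetilde M$ (a real space form of curvature $-1$ after rescaling, so $\lambda = n-1$ in dimension $n$) and build $x$ by a diagonal/concatenation construction analogous to the ones used in \cite{SchroederWeil}. The role of the assumption $i_M > \log 2$ is to guarantee that balls of radius $\tfrac{1}{2} i_M$ in $SM$ are isometric to their lifts, so that computations of the Bowen distance $d_l$ along segments of length $l$ can be carried out entirely in $\widetilde M$ as long as segments stay within a fundamental domain of controlled size; this is where the specific threshold $\log 2$ enters, since in curvature $-1$ two geodesic segments of length $l$ starting $\e$-close spread apart by a factor of order $e^l$, and one needs enough room in $M$ for such a spread to be detected. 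Given this, I would construct $x$ so that any Bowen-$\e$-recurrence of $T^n x$ would force the geodesic to ``almost close up'' in $\widetilde M$ along a segment of length $l$, and show by a volume/counting argument in $\widetilde M$ that this can be arranged to require $n$ of the order $e^{(n-1)l}$, giving $\cG_x \geq n-1 = \lambda$.

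The main obstacle is the lower bound construction and in particular the precise role of the injectivity radius condition: one needs a metric closing lemma in $SM$ (as indicated by Proposition \ref{ClosingLemma}) to translate ``Bowen-$\e$-recurrence of $T^n x$'' into a geometric statement about periodic geodesics in $\widetilde M$, and then to run the volume-counting argument one must know that those periodic geodesics actually correspond to distinct points in $SM$. The hypothesis $i_M > \log 2$ is exactly what prevents artificial self-recurrences coming from a short systole, and I would isolate this step as the key lemma; the remainder is then a Borel–Cantelli-style diagonal construction picking $x$ to avoid a shrinking family of neighborhoods of periodic orbits of length $\leq l$, which are of cardinality $\asymp e^{(n-1)l}$ by the prime geodesic theorem in constant curvature. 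Everything else (monotonicity of $F_x^l$, the passage to the limsup in $l$ and the $\e \to 0$ limit) is bookkeeping once this geometric picture is in place.
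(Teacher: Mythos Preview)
Your upper bound argument is correct and is explicitly acknowledged in the paper as a valid route: the text says the inequality $\cG \leq \lambda$ ``is also a consequence of Theorem \ref{ThmOne} and Manning's result.'' The paper nonetheless proves this directly via Theorem \ref{ThmVolEnt}, a self-contained volume-counting argument in the universal cover that bounds $\varphi(l)$ by $\mathrm{vol}(B(p_0,l+C))$ without passing through topological entropy; your route is shorter but relies on Manning as a black box.

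For the equality assertion your sketch drifts from what the paper actually does, and the mechanism you describe is misplaced. The paper does \emph{not} construct the required orbit here: it simply invokes Theorem \ref{ExistenceGeodesic} (i.e.\ Theorem 4.3 of \cite{SchroederWeil}), which already produces a $\varphi$-aperiodic ray for any $\varphi$ with growth rate strictly below $n$, and then uses Lemma \ref{Umrechnung} to convert this into $\cG \geq n = \lambda$. The condition $i_M > \log(2)$ is inherited from that existence theorem, not from any closing-lemma step; your heuristic about exponential spreading is not the actual reason. More importantly, the metric closing lemma (Proposition \ref{ClosingLemma}) in this paper is used for the \emph{classification} result (Theorem \ref{ThmClassificationGeodesics}), i.e.\ to show that aperiodic orbits avoid neighborhoods of closed geodesics, not to \emph{construct} them; your proposed ``closing lemma + Borel--Cantelli avoidance'' route runs the logic backwards and would not by itself produce an orbit with the required growth rate. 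Finally, you omit the zero-curvature case, where equality is immediate since $\lambda = h(\phi^t) = 0$ forces $\cG = 0$.
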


\begin{remark}
There are many more relations between the dynamics of the system $(X,T)$ and complexities of the space $X$ in the literature, such as the Hausdorff-dimension.
For instance, while our condition is based on the \emph{whole} orbit and in most cases on `non-typical' orbits,
 Boshernitzan already  established a relation between the Hausdorff-dimension to the quantitative rate of recurrence for almost all points in $X$ (for a suitable measure and setting); see \cite{Bosh}  for details.
We refer to Section \ref{SectionHausdorff} for further discussion and results.
\end{remark}

The second aim of the paper is to provide a characterization of aperiodic orbits in terms of how they avoid certain neighborhoods of periodic points.
Therefore we establish a connection between aperiodicity and the distribution of periodic orbits.
Denote by $\cal{P}_T$ the set of $T$-periodic points in $X$ and assume there exists a periodic point $x_p \in \cal{P}_T$ of 
period $p\in \N$ (the index  of $x_p$ will stand for its period $p$).
When the shift function $F_x^0$ of the orbit $\cal{T}(x)$ is unbounded, then  $\cal{T}(x)$ does not intersect
the open neighborhood
\be
\nonumber
	\cal{N}_{x_p}(\e) \equiv  B_{d}(x_p,  \e) \cap T^{-p}\big(B_{d}(x_p,\e)\big)
\ee
 of $x_p$ for every  small enough $\e>0$, see Propositon \ref{PropositionCriticalNbhd}. 
We may also say that the orbit  $\cal{T}(x)$ is \emph{bounded} with respect to the obstacle $x_p$.
In this case we have
\be
\label{ApproxConst}
	c_{x_p}(x) \equiv \inf\{ \e>0 : \cal{T}(x) \cap \cal{N}_{x_p}(\e) \neq \emptyset \} >0,
\ee
which we call the \emph{approximation constant} of $x$ with respect to the periodic point $x_p$.

\begin{remark}
Let $\mu$ be an ergodic Borel probability measure. If $x_p \in $ supp$(\mu)$ for some $x_p \in \cal{P}_T$ then  $c_{x_p}(x)=0$ for $\mu$-almost every $x$. With respect to the periodic point $x_p$, we may call a point $x$ \emph{well approximable} when $c_{x_p}(x)=0$, and otherwise \emph{badly approximable}.
Indeed, the \emph{shrinking target property}, due to Hill and Velani \cite{HillVelani}, considers
more generally a  sequence of nested measurable sets $A_n \subset X$ (in our case $\cal{N}_{x_p}(\e_n)$ with $\e_n\to 0$)
and is interested in the properties of the points in $ X$ whose orbit hits $A_n$ for infinitely many times $n$.
Such points are called \emph{well approximable} (with respect to $\{A_n\}$) by analogy with Diophantine approximation.
Conversely, a point for which the orbit avoids some set $A_n$ for some  $n=n(x)$ may be called \emph{badly approximable}.
\end{remark}

Let us remark that  a suitable version of the closing lemma will hold for all the examples considered in Section \ref{Apps}.
This in turn will imply a quantitative property of the system $(X, T)$ stating that recurrence is `caused by periodic orbits', 
which we call \emph{$\d$-closing property} with respect to a non-decreasing function $\d: (0, \infty) \to (0, \infty) $ (see Definitions \ref{ClosingProperty}).
Given a non-increasing unbounded function $F : (0, \infty) \to \N$, we can define versions of its inverse function given by the \emph{quantile functions} $F^{\leftarrow}$, $F^{\rightarrow} : \N \to (0, \infty)$,
\be
\label{QuantileFct}
	F^{\leftarrow}(s) \equiv \sup\{ \e>0 : F(\e) > s\}, \ \ \  F^{\rightarrow}(s) \equiv \inf\{ \e>0 : F(\e) \leq s\}.
\ee
Clearly, if $F: (0, \infty)\to (0, \infty)$ is continuous and bijective, then $F^{\leftarrow}(s) = F^{\rightarrow}(s)  = F^{-1}(s)$.
We classify $F_x^0$-aperiodic orbits $\cal{T}(x)$ in terms of the collection of the approximation constants $\{c_{x_p}(x)\}$;
see Theorem \ref{ThmBounded}  for further details and proofs.

\begin{theorem}
Let $F: (0,  \infty) \to \N$ be a non-increasing and unbounded function.
If $\cal{T}(x)$ is $F_x^0$-aperiodic with  $F_x^0 \geq F$,
then for every periodic point $x_p$ we have
\be
	\nonumber
	c_{x_p}(x) \geq F^{\leftarrow}(p) /2.
\ee
Conversely, if the system $(X,T)$ satisfies the $\d$-closing property  and we have  
\be
\nonumber
	c_{x_p}(x)>  F^{\rightarrow}(p)
\ee
for every periodic point $x_p$, 
then $\cal{T}(x)$ if $\tilde F_x^0$-aperiodic with $\tilde F_x^0 \geq F\circ \d$.
\end{theorem}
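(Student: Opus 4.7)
The plan is to treat the two directions separately. For the forward implication I will argue contrapositively, while the converse will follow from the $\delta$-closing property applied to a single near-return.

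For the forward direction, suppose for contradiction that some periodic point $x_p$ of period $p$ satisfies $c_{x_p}(x) < F^{\leftarrow}(p)/2$. Since $\cal{N}_{x_p}(\e)$ is monotone increasing in $\e$, the definition of $c_{x_p}(x)$ as an infimum lets me pick $\e$ with $c_{x_p}(x) < \e < F^{\leftarrow}(p)/2$ and some $n\geq 0$ such that $T^n x \in \cal{N}_{x_p}(\e)$, that is $d(T^nx, x_p) < \e$ and $d(T^{n+p}x, x_p) < \e$. The triangle inequality gives $d(T^n x, T^{n+p}x) < 2\e$, so $s_0(T^n x, 2\e) \leq p$ and hence $F_x^0(2\e) \leq p$. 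On the other hand, $2\e < F^{\leftarrow}(p) = \sup\{\eta>0 : F(\eta)>p\}$ together with the monotonicity of $F$ yields $F(2\e) > p$, contradicting $F_x^0 \geq F$.

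For the converse, fix $\e>0$, $n\geq 0$ and any $s\geq 1$ with $d(T^{n+s}x, T^n x) < \e$. Applying the $\delta$-closing property to the near-periodic point $y = T^n x$ produces a periodic point $x_p$ of period $p=s$ with $d(T^n x, x_p) < \delta(\e)$ and, using that $T^p x_p = x_p$ and that the closing lemma compares the whole finite pseudo-orbit segment to the true orbit of $x_p$, also $d(T^{n+p}x, x_p) < \delta(\e)$. Hence $T^n x \in \cal{N}_{x_p}(\delta(\e))$ and $c_{x_p}(x) \leq \delta(\e)$. Combined with the hypothesis $c_{x_p}(x) > F^{\rightarrow}(p) = \inf\{\eta>0 : F(\eta)\leq p\}$, this forces $\delta(\e) > F^{\rightarrow}(p)$ and therefore $F(\delta(\e)) \leq p = s$ by monotonicity of $F$. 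Since $s$ and $n$ were arbitrary, $\tilde F_x^0(\e) = F_x^0(\e) \geq F(\delta(\e))$, which is the desired bound $\tilde F_x^0 \geq F\circ \delta$.

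The main obstacle I anticipate is the precise quantitative form of the $\delta$-closing property. The argument above requires it to furnish a periodic shadow that is $\delta(\e)$-close to \emph{both} endpoints $T^n x$ and $T^{n+p}x$ of the almost-periodic segment; if the definition in Section~\ref{SectionAperiodic} only guarantees closeness at one endpoint, an additional factor (coming from the Lipschitz behaviour of $T^p$ or from a fellow-traveller estimate) will propagate through and slightly weaken the conclusion. The two smaller technical points are (i) choosing $\e$ strictly larger than $c_{x_p}(x)$ so that the infimum in \eqref{ApproxConst} is actually realised along the orbit, and (ii) book-keeping with the quantile functions $F^{\leftarrow}$, $F^{\rightarrow}$ from \eqref{QuantileFct}, which is precisely where the factor $1/2$ enters the first direction while no such loss occurs in the second.
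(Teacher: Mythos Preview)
Your proof is correct and essentially identical to the paper's argument for Theorem~\ref{ThmBounded} (the first direction is exactly Proposition~\ref{PropositionCriticalNbhd}). Your anticipated obstacle does not arise: Definition~\ref{ClosingProperty} explicitly requires $x \in \cal{N}(x_s, \delta(\e))$, i.e.\ $d(x,x_s)<\delta(\e)$ \emph{and} $d(T^sx,x_s)<\delta(\e)$, so closeness at both endpoints is built in; the only minor step you omit is the boundary case $\delta(\e)>\text{diam}(X)$, where the closing property is vacuous and the paper dispatches it separately.
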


Note that a similar result holds for the classification of $F_x^l$-aperiodic orbits, see Theorem \ref{BoundedLength} in Section \ref{DA}.
\\

\noindent \emph{Acknowledgements.} 
S.W. expresses his sincere gratitude to Jean-Claude Picaud. Several results were motivated or improved by numerous discussions with him.
He was partially supported by the ERC starter grant DLGAPS $279893$.
Both authors acknowledge the support by the Swiss National Science Foundation (grant no. $135091$) and the referee for the detailed comments and
corrections.


\section{Quantitatively Aperiodic and Recurrent Orbits}
\label{SectionAperiodic}

\subsection{Preliminaries}
\label{Prelims}
Recall that $(X,d)$ is a compact metric space and $T : X \to X$ is a continuous transformation. 
Moreover let $\mu$ be a  Borel probability measure on $X$ for which $T$ is measure-preserving; see \cite{Walters}.
A point $x\in X$ is called \emph{periodic} (with respect to $T$) if 
there exists an integer $p\in \N$, called a \emph{period} of $x$, 
such that $T^px=x$. We write $x_p$ to indicate that $x_p$ is periodic and has \emph{primitive} period $p$.
Denote by $\cal{P}_T$ the $T$-invariant set of $T$-periodic points of $X$.
A point is called \emph{aperiodic}, if it is not periodic.
A point $x \in X$ is \emph{recurrent} with respect to $T$, if for any 
$\e>0$ its return time $s(x, \e) \in \N$ is finite.
Periodic points are obviously recurrent with $s(x, \e)$ bounded by the period.
We recall that, by the Poincar\'e-recurrence theorem, $\mu$-almost every point is recurrent,
and the set $\cal{R}_T$ of recurrent points is $T$-invariant.
However, the return time $s(T^n x,\varepsilon)$ at time $n$ can differ from $s(x,\varepsilon)$ in general.

We are interested in a quantitative condition on recurrence and aperiodicity of whole orbits.
Given $\e>0$, we ask independently from the time for a
lower bound on the \emph{shift} $s$ such that $T^{n+s} x$ is allowed to be $\varepsilon$-close to $T^nx$:

\begin{definition} 
\label{DefAperiodic}
For a non-increasing function $F : (0, \infty) \to [1, \infty)$ a point $x \in X$ is 
called \emph{$F$-aperiodic}  if for every  $\varepsilon>0$ and every shift $s\in \N$, we have
\be 
\label{DefFAperiodic}
	d(x,T^s x)<\e 	\Longrightarrow s \geq F(\e).
\ee 
The orbit $\cal{T}(x)$ is \emph{$F$-aperiodic}, if it is $F$-aperiodic at every time $n \in \N_0$, that is, if $T^nx$ is $F$-aperiodic.%
\footnote{ Note that we changed the terminology from \cite{SchroederWeil}.  }
\end{definition}

\begin{remark}
Note that \eqref{DefFAperiodic} reads that the return time $s_0(x, \e) \geq F(\e)$,
or, that $d(x, T^sx) \geq \e$ whenever $s < F(\e)$. 
Moreover, if $F$ is continuous then $s \geq F(d(x, T^sx))$.
If in addition $F$ is invertible, we have $d(x, T^sx) \geq  F^{-1}(s)  \equiv \tilde F(s)$.
These conditions could serve as an alternative definition of \eqref{DefFAperiodic}.
\end{remark}

\noindent 
Recall that  every orbit $\cal{T}(x)$ is $F_x^0$-aperiodic with $F_x^0$ defined in \eqref{DefFx}.
We emphasize that although we called the condition `$F$-aperiodic', 
a periodic orbit  is 
$F$-aperiodic for a suitable bounded function $F$. 
We therefore view the growth rate of $F$ also as a measure for its aperiodicity.

In terms of the topological entropy of $(X,T)$, the notion of $F$-aperiodic orbits will turn out to be unsuitable and we need to adapt the definition to the specific setting.
Recall the Bowen metric $d_l$ of length $l\in \N_0$, defined in \eqref{BowenMetric}.
Let $G : \N_0 \times (0, \infty) \to [1, \infty)$ be a two-parameter function, where for
\be	
\nonumber
	G(l, \e) \equiv G_l (\e) \equiv G_{\e}(l)
\ee
we assume that the restricted functions $G_l : (0, \infty) \to [1, \infty)$ are non-decreasing
and $G_{\e} : \N_0 \to [1, \infty)$ are non-increasing, for every $l \in \N_0$ and $\e>0$.

\begin{definition} 
\label{Def2}
Given the length $l\in \N_0$, a point $x \in X$ (respectively an orbit $\cal{T}(x)$) is called \emph{$G_l$-aperiodic} 
if it is $G_l$-aperiodic in the metric space $(X, d_l)$.
Finally, $\cal{T}(x)$ is \emph{$G$-aperiodic}, if it is $G_l$-aperiodic for every length $l\in \N_0$.
\end{definition}

\noindent Recall that every orbit $\cal{T}(x)$ is $G$-aperiodic for the function $G(l, \e) \equiv F_x^l(\e)$ with $F_x^l$ defined in \eqref{DefFx}.
Moreover note that $\cal{T}(x)$ is $G$-aperiodic
if for each $n \in \N_0$ the following is satisfied for the point $y=T^nx$: 
for every length $l\in \N_0$, for every  $\e>0$ and every shift $s\in \N$, we have
\be \nonumber
	d_l(y,T^{s} y) = \max_{0\leq i \leq l}  d(T^iy, T^{i+s} y) <\varepsilon \ \  	\Longrightarrow \ \ s \geq G_l(\e)= G(l,\varepsilon) .
\ee

Assume in the following that $T$ admits periodic points and let $F:(0, \infty) \to \N$ be non-decreasing and unbounded and recall the definition of $F^{\leftarrow}$ in \eqref{QuantileFct}.
Then the question of existence of $F$-aperiodic orbits is related to the distribution of periodic orbits.
Indeed,  an $F$-aperiodic orbit avoids periodic orbits in the following quantitative sense.
Let $x_p \in \cal{P}_T$ and define the nonempty open set
\be
\label{CriticalNbhd}
	\cal{N}_{x_p}(F) \equiv \cal{N}_{x_p}(F^{\leftarrow}(p)/2) =  B(x_p,   F^{\leftarrow}(p) /2) \cap T^{-p}\big(B(x_p, F^{\leftarrow}(p) /2)\big),
\ee
called the \emph{critical neighborhood} of $x_p$ with respect to the function $F$.

\begin{proposition}
\label{PropositionCriticalNbhd}
The set $\cal{N}_{x_p}(F)$ cannot contain any $F$-aperiodic point; in particular, an $F$-aperiodic orbit $\cal{T}(x)$ avoids $\cal{N}_{x_p}(F)$ and cannot be dense.

Moreover, if $\mu$ is a $T$-ergodic Borel probability measure and the support of $\mu$ contains a periodic point $x_p \in \cal{P}_T$,
then the set $\{ x \in X : \cal{T}(x) $ is $F$-aperiodic$ \}$  is a $\mu$-null set. 
\end{proposition}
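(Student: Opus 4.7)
The plan is to prove the first assertion by showing directly that any $y \in \cal N_{x_p}(F)$ violates the defining implication \eqref{DefFAperiodic} at the specific shift $s=p$. By definition of the critical neighborhood, both $y$ and $T^p y$ lie in $B(x_p, F^{\leftarrow}(p)/2)$, so the triangle inequality yields
\[
    d(y, T^p y) \;\le\; d(y, x_p) + d(x_p, T^p y) \;<\; F^{\leftarrow}(p).
\]
Since this is a \emph{strict} inequality and $F^{\leftarrow}(p) = \sup\{\e > 0 : F(\e) > p\}$, there exists a witness $\e^{*}$ with $\e^{*} > d(y, T^p y)$ and $F(\e^{*}) > p$. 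Applying \eqref{DefFAperiodic} at $y$ with this $\e^{*}$ and $s = p$ would force $p \ge F(\e^{*}) > p$, a contradiction. Hence no point of $\cal N_{x_p}(F)$ is $F$-aperiodic.

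The orbit statement is then immediate: by Definition \ref{DefAperiodic}, an $F$-aperiodic orbit $\cal T(x)$ consists entirely of $F$-aperiodic points, so no iterate $T^n x$ can belong to $\cal N_{x_p}(F)$. This set is open and nonempty (it contains $x_p$, using $T^p x_p = x_p$; and $F^{\leftarrow}(p) > 0$ because $F$ is non-increasing and unbounded near the origin), which rules out density of $\cal T(x)$ in $X$.

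For the measure-theoretic assertion I would combine the first part with the Birkhoff ergodic theorem. The hypothesis $x_p \in \mathrm{supp}(\mu)$ together with openness of $\cal N_{x_p}(F)$ gives $\mu(\cal N_{x_p}(F)) > 0$. Ergodicity then yields, for $\mu$-almost every $x$,
\[
    \lim_{N\to\infty} \frac{1}{N} \sum_{n=0}^{N-1} \mathbf{1}_{\cal N_{x_p}(F)}(T^n x) \;=\; \mu(\cal N_{x_p}(F)) \;>\; 0,
\]
so in particular some iterate $T^n x$ enters $\cal N_{x_p}(F)$. By the first part such an $x$ cannot be $F$-aperiodic, and hence the set of points with $F$-aperiodic orbits is contained in a $\mu$-null set.

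The whole argument is essentially elementary; the only subtle step is the extraction of the witness $\e^{*}$, where one must use the strictness of $d(y, T^p y) < F^{\leftarrow}(p)$ obtained from the triangle inequality together with the supremum characterization of $F^{\leftarrow}(p)$. Everything else follows from a direct unpacking of the definitions of $F$-aperiodicity, of the critical neighborhood, and of the support of an ergodic measure.
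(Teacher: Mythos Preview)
Your proof is correct and follows essentially the same line as the paper's. For the first part both arguments use the triangle inequality to get $d(y,T^{p}y)<F^{\leftarrow}(p)$ and then contradict $F$-aperiodicity; your explicit extraction of a witness $\e^{*}$ with $F(\e^{*})>p$ is in fact a bit more careful than the paper's shorthand $F(F^{\leftarrow}(p))>p$. For the measure-theoretic part there is a small but genuine difference: the paper observes that the set $S$ of $F$-aperiodic points is $T$-invariant, so ergodicity forces $\mu(S)\in\{0,1\}$, and disjointness from the positive-measure set $\cal N_{x_p}(F)$ rules out $\mu(S)=1$; you instead invoke Birkhoff to show that almost every orbit visits $\cal N_{x_p}(F)$. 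Both routes are standard and equally short; the invariance argument is marginally more direct since it avoids the ergodic theorem altogether.
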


Similar results hold for $G$-aperiodic (respectively $G_l$-aperiodic) orbits by defining a suitable critical neighborhood as in \eqref{PenetrationCalF} below.
Thus, in the case when $T$ is ergodic with respect to $\mu$,  while a `typical' orbit is dense, 
this shows that an $F$-aperiodic orbit is non-typical when $F$ is unbounded.

\begin{proof}
For every point $y\in \cal{N}(x_s, F) = B(x_s, F^{\leftarrow}(s)/2) \cap T^{-p} (B(x_s, F^{\leftarrow}(s)/2))$  
we have by the triangle inequality that $d(x,T^{s}y) <  F^{\leftarrow}(s)  \equiv  \e$.
This gives a return time $s_0=s(y,  \e) \leq s$.
However, for any $F$-aperiodic point we would have
\be
\nonumber
	s_0 \geq F(\e) = F( F^{\leftarrow}(s)) > s,
\ee 
showing that no point in $\cal{N}_{x_p}( F)$ can be $F$-aperiodic. 
Thus we see that an $F$-aperiodic orbit $\cal{T}(x)$ must avoid $\cal{N}_{x_p}(  F)$.

For the second part, note that by definition the set $S \equiv \{ x \in X : \cal{T}(x) $ is $F$-aperiodic$ \}$  is $T$-invariant, hence $\mu(S) \in\{0,1\}$ by ergodicity.
If $x_p \in  $ supp$(\mu)$, then $\mu(\cal{N}_{x_p}(F))>0$ and the proposition follows since $S$ is disjoint to $\cal{N}_{x_p}(F)$.
\end{proof}


\subsection{Bounded orbits and classification of aperiodic orbits}
\label{DA}
In this section, we relate the question of the existence of $F$-aperiodic (respectively $G$-aperiodic) orbits again to the  distribution of periodic points for the general setup.
In particular, this motivates  a classification of $F$-aperiodic orbits in terms of a collection of quantities which measure how `bounded' the orbits are with respect to every periodic orbit.

More precisely, assume in the following that $T$ admits periodic points and that $F:(0, \infty) \to \N$ is non-increasing and unbounded.
Given a periodic point $x_p \in \cal{P}_T$, 
recall the definitions of the critical neighborhood $\cN(x_p, F)$ in \eqref{CriticalNbhd} and the approximation constant $c_{x_p}(x)$ in \eqref{ApproxConst}.  
Then define the set of \emph{bounded points} with respect to the periodic point $x_p $ by
\bea
	\nonumber
	\textbf{Bounded}_{x_p} &\equiv& \{x \in X:  \exists \e>0 \text{ such that } \cal{T}(x) \cap \cal{N}_{x_p}( \e) = \emptyset \} 
	\\ \nonumber
	&=& \{x \in X : c_{x_p}(x)>0\}.
\eea
Moreover, we define the \emph{$F$-bounded points} with respect to $x_p$ by  
\bea
	\label{PenetrationF}
	\textbf{Bounded}_{x_p}( F) &\equiv& \{x \in X: \cal{T}(x) \cap \cal{N}_{x_p}( F)  = \emptyset \} 
	\\ \nonumber
	&\subset& \{x \in X: c_{x_p}(x) \geq F^{\leftarrow}(p)/2\} \subset \textbf{Bounded}_{x_p}.
\eea

\begin{remark}
\label{Hurwitz}
Note that the `Hurwitz-constant' with respect to $x_p$, given by
\be
\nonumber
	\cH_{x_p} \equiv \sup_{x \in X} c_{x_p}(x) \leq \text{diam}(X),
\ee
is finite.
The existence of $F$-aperiodic orbits gives lower bounds for the collection $\{\cH_{x_p} : x_p \in \cP_T\}$, and conversely, this collection determines pointwise upper bounds for  functions  $F$ such that $F$-aperiodic orbits can exist;
indeed, we must have for such $F$ that
\be
\nonumber
	F(2\cH_{x_p}) \leq p.
\ee
Compare this with Example \ref{Torus} below, that is to the classical theory of Diophantine approximation.
\end{remark}

We now consider the condition that recurrence is caused by periodic orbits in the following sense:

\begin{definition}
\label{ClosingProperty}
We say that the system $(X, d, T)$ satisfies the \emph{$\d$-closing property} 
with respect to the non-decreasing function $\delta : (0, \infty) \to (0, \infty)$,
if, whenever $d(x, T^sx) < \e$ for some $\e>0$ with $\d(\e) \leq  \text{diam}(X)$, some point $x \in X$ and $s\in \N$, 
 there exists a periodic point $x_s \in \cal{P}_T$ of period $s$ such that
\be
	\nonumber
  	x \in \cal{N}(x_s, \d(\e));	\ \ \text{ equivalently, }	\ \ d( x, x_s) < \d(\e) \ \ \ \text{and} \ \ \ d(T^sx, x_s) < \d(\e).
\ee
\end{definition}

\begin{remark}
Note that if $\d(\e)>\text{diam}(X)$ then the above condition is empty and the closing property holds automatically. 
Hence, also in view of Theorem \ref{ThmBounded} below, we are interested in functions $\d$ which are as small as possible.
\end{remark}

Assuming the $\d$-closing property, we can classify $F$-aperiodic orbits 
as follows.

\begin{theorem}
\label{ThmBounded}
Let $\cal{T}(x)$ be $F$-aperiodic. Then, necessarily
\be
\label{Bad}
	x \in \bigcap_{x_p \in \cal{P}_T}  \textbf{Bounded}_{x_p}( F).
\ee
Conversely, suppose that $(X, d ,T)$ satisfies the $\d$-closing property  and for $x \in X$ we have 
\be
\nonumber
	c_{x_p}(x) > F^{\rightarrow}(p)
\ee
for all $x_p \in \cal{P}_T$.
Then $\cal{T}(x)$ is $\tilde F$-aperiodic for the function $\tilde F(\e)= F(\d(\e))$.
\end{theorem}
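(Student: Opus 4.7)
The plan is to treat the two implications separately. The necessity of \eqref{Bad} is essentially a rephrasing of Proposition \ref{PropositionCriticalNbhd}: if $\cal{T}(x)$ is $F$-aperiodic, then for every $x_p \in \cal{P}_T$ the orbit avoids the critical neighborhood $\cal{N}_{x_p}(F)$, and by the definition \eqref{PenetrationF} this is exactly $x \in \textbf{Bounded}_{x_p}(F)$. Intersecting over all periodic points gives the claim.

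For the converse I would argue by contradiction. Assume $\cal{T}(x)$ is not $\tilde F$-aperiodic, so there exist $n \in \N_0$, $s \in \N$ and $\e > 0$ with $d(T^n x, T^{n+s} x) < \e$ yet $s < \tilde F(\e) = F(\d(\e))$. Set $y = T^n x$, so that $d(y, T^s y) < \e$. Working in the range where $\d(\e) \leq \text{diam}(X)$ (outside that range the closing property is vacuous by the remark after Definition \ref{ClosingProperty} and the corresponding part of the aperiodic condition must be handled as a trivial edge case), the $\d$-closing property furnishes a periodic point $x_s \in \cal{P}_T$ of period exactly $s$ with $y \in \cal{N}_{x_s}(\d(\e))$. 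Since $y = T^n x$ belongs to $\cal{T}(x)$, the orbit meets $\cal{N}_{x_s}(\d(\e))$, so the definition \eqref{ApproxConst} of the approximation constant gives $c_{x_s}(x) \leq \d(\e)$.

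Combining this with the hypothesis $c_{x_s}(x) > F^{\rightarrow}(s)$ yields $F^{\rightarrow}(s) < \d(\e)$. The opposite inequality $\d(\e) \leq F^{\rightarrow}(s)$ follows from $F(\d(\e)) > s$ and the monotonicity of $F$: if one had $\d(\e) > F^{\rightarrow}(s) = \inf\{\e' > 0 : F(\e') \leq s\}$, then some $\e' < \d(\e)$ would satisfy $F(\e') \leq s$, and non-increasingness of $F$ would force $F(\d(\e)) \leq F(\e') \leq s$, contradicting $F(\d(\e)) > s$. The two resulting inequalities $F^{\rightarrow}(s) < \d(\e) \leq F^{\rightarrow}(s)$ are absurd, completing the proof. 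The conceptually delicate step is really the extraction of a periodic point of period \emph{exactly} $s$ from the closing property, since the hypothesis $c_{x_p}(x) > F^{\rightarrow}(p)$ must be matched period-by-period with the recurrence event; the remainder is a definition chase in which one must be vigilant about strict versus non-strict inequalities at the jumps of the integer-valued $F$ when manipulating the quantile function $F^{\rightarrow}$.
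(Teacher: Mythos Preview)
Your proof is correct and follows essentially the same route as the paper's. The paper also invokes Proposition \ref{PropositionCriticalNbhd} for the first part, and for the converse it argues directly (rather than by contradiction) but with the identical key steps: from $d(T^i x, T^{i+s}x)<\e$ the $\d$-closing property produces $x_s$ with $T^i x \in \cal{N}(x_s,\d(\e))$, the hypothesis $c_{x_s}(x)>F^{\rightarrow}(s)$ forces $\d(\e)>F^{\rightarrow}(s)$, and then monotonicity of $F$ gives $F(\d(\e))\leq s$; the edge case $\d(\e)>\text{diam}(X)$ is dispatched with the same one-line observation you flag.
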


\begin{proof}
The first part follows immediately from Proposition \ref{PropositionCriticalNbhd}.
For the second part, assume that $d(T^i x, T^{i+s}x) < \e $.
By the $\d$-closing property,  if $\d = \d(\e) \leq $ diam$(X)$, there exists a periodic point $x_s$ of period $s$ such that we have $T^ix \in \cal{N}(x_s, \d)$.
But since $c_{x_p}(x) > F^{\rightarrow}(s)$, hence $T^ix \not \in \cal{N}(x_s, F^{\rightarrow}(s)) $,
we must have $\d> F^{\rightarrow}(s)$.
This shows 
\be
\nonumber
	F(\d(\e)) \leq F(F^{\rightarrow}(s) ) \leq s.
\ee
If $\d(\e)>  \text{diam}(X)$, then $F(\d(\e))\leq 1 \leq s$ anyway,
finishing the proof.
\end{proof}

\begin{remark}
We remark that the closing property will be satisfied in all  examples considered in Section \ref{Apps}.
Moreover, in the theory of `bounded orbits' one often considers the concept of Schmidt's game (see \cite{Schmidt} and references thereof)
in order to capture properties of a set \textbf{Bounded} of bounded points.
In our examples in Section \ref{Apps}, every set $\textbf{Bounded}_{x_p}$, $x_p \in \cal{P}_T$, will be a Schmidt winning set,
and as such, this implies  the intersection $\cap_{x_p}\textbf{Bounded}_{x_p}$ to be dense and nonempty.
However, \eqref{Bad} is a stronger condition since, due to the restriction of $F$, we do not consider the full set $\textbf{Bounded}_{x_p}$.
\end{remark}


Concerning $G$-aperiodic orbits,
assume in the following that for every $\e>0$ we have a non-decreasing and unbounded
function $ G_{\e} = G(\e, \cdot) : \N_0 \to \N$.
For the function $G_{\e}$ define the following quantile functions $G_{\e}^{\leftarrow}$, $G_{\e}^{\rightarrow} : \N \to \N_0$ by
\be
\nonumber
	G_{\e}^{\leftarrow}(s) \equiv \min\{ l \in \N_0 : G_\e(l)\geq  s\}, \ \ \ G_{\e}^{\rightarrow}(s) \equiv \max\{ l \in \N_0 : G_\e(l)\leq  s\}.
\ee

We first define the penetration length in neighborhoods of periodic orbits.
With respect a given point $x_0 \in X$ and $\e>0$,
the \emph{penetration length} of a point $y \in X$ is given by $\frak{p}_{x_0, \e}(y) \equiv 0$ if $y \not \in B(x_0, \e)$ 
and otherwise by
\bea
\label{DefPenetrationLength}
	\frak{p}_{x_0, \e}(y) &\equiv& \sup\{l \in \N_0 : y \in B_{d_l}( x_0, \e) \} +1
	\\ \nonumber
	&=&  \sup\{l \in \N_0 : d(T^i y, T^i x_0) < \e \text{ for } 0\leq i \leq l \} +1 \in \N^*.
\eea

Analogously to \eqref{PenetrationF},  given $\e>0$,
we define the \emph{$G$-bounded points} with respect to a given periodic point $x_p \in \cal{P}_T$ 
by  
\bea
\label{PenetrationCalF}
	\textbf{Bounded}_{x_p}(G_{ \e}) &\equiv& \{x \in X: \cal{T}(x) \cap B_{d_{p + G_{2\e}^{\leftarrow}(p)} }(x_p, \e)= \emptyset \}
		\\ \nonumber
			&=&   \{x \in X:  \frak{p}_{x_p, \e}(T^nx) \leq p + G_{2\e}^{\leftarrow}(p) +1, \forall n\geq 0\}.
\eea
For the new situation, we need to adjust the version of the closing property.

\begin{definition}
\label{StrongClosingProperty}
We say that the system $(X, d, T)$ satisfies the \emph{strong $\d$-closing property} 
with respect to $\e>0$ and a non-decreasing function $\d=\delta_{\e} : \N_0 \to \R^+$,
if, whenever $d_l(x, T^sx) < \e$ for some point $x \in X$, shift $s\in \N$ and length $l\in \N_0$,
 there exists a periodic point $x_s \in \cal{P}_T$ of period $s$  such that
\be
\nonumber
  	 \frak{p}_{x_s, \e}(x) \geq s+ \delta_\e(l) +1; \ \  \text{ equivalently } \ \ x \in B_{d_{s+\delta_\e(l)}}(x_s, \e). 
\ee
\end{definition}

Under the assumption of the strong $\d$-closing property, 
we give the following interpretation. 

\begin{theorem}
\label{BoundedLength}
Let $\cal{T}(x)$ be a $G$-aperiodic orbit.
Given $\e>0$, for every periodic point $x_s$ of period $s\in \N$,
we have that,
the penetration lengths of $T^n(x)$ are  bounded by 
\be
\nonumber
	\frak{p}_{x_s, \e}(T^nx)  \leq s + G^{\leftarrow}_{2\e}(s) +1,
\ee
for all $n\in \N_0$.
In particular, for every $\e>0$, necessarily
\be
\label{BadCalF}
	x \in \bigcap_{x_s \in \cal{P}_T} \textbf{Bounded}_{x_s}( G_{\e}).
\ee 
Conversely, suppose that $(X, T, d)$ satisfies the strong $\d$-closing property for the function $\d=\delta_{\e}$ 
 and that there is a point $x$ satisfying
\be
\nonumber
	\frak{p}_{x_s, \e}(T^nx)  \leq s + G^{\rightarrow}_{\e}(s) +1,
\ee
for every $\e>0$. 
Then $\cal{T}(x)$ is $G'$-aperiodic for the function $G'(\e, l) =G(\e, \d_{\e}(l))$.
\end{theorem}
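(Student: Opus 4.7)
The plan is to mirror the two implications of Theorem \ref{ThmBounded} in the Bowen-metric setting, with the penetration length $\frak{p}_{x_s,\e}$ replacing the approximation constant $c_{x_p}$, and the strong $\d$-closing property replacing the $\d$-closing property.

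For the necessary direction, I would fix $\e>0$, $n\in\N_0$, and a periodic point $x_s$ of primitive period $s$, and argue by contradiction. If $\frak{p}_{x_s,\e}(T^nx)>s+G^{\leftarrow}_{2\e}(s)+1$, then by definition of the penetration length one has $d(T^{n+i}x,T^ix_s)<\e$ for every $0\leq i\leq s+G^{\leftarrow}_{2\e}(s)+1$. Using the periodicity $T^{s+j}x_s=T^jx_s$ together with the triangle inequality, exactly as in Proposition \ref{PropositionCriticalNbhd}, this produces
$$d_{l}(T^nx, T^s(T^nx))<2\e,\qquad l=G^{\leftarrow}_{2\e}(s)+1.$$
The $G$-aperiodicity of $\cal{T}(x)$ applied at the point $T^nx$ then forces $s\geq G_{2\e}(l)$, which combined with the defining property of the quantile $G^{\leftarrow}_{2\e}(s)$ yields the desired contradiction. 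The containment \eqref{BadCalF} is then a direct rephrasing of this penetration bound via the definition \eqref{PenetrationCalF}.

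For the converse, assume the strong $\d$-closing property and that $\frak{p}_{x_s,\e}(T^nx)\leq s+G^{\rightarrow}_\e(s)+1$ for every periodic $x_s$, every $n\in\N_0$, and every $\e>0$. To verify the $G'$-aperiodicity of $\cal{T}(x)$, suppose $d_l(T^nx,T^s(T^nx))<\e$ for some shift $s$ and length $l$. The strong $\d$-closing property, applied at $T^nx$, produces a periodic point $x_s$ of period $s$ with $T^nx\in B_{d_{s+\d_\e(l)}}(x_s,\e)$, equivalently $\frak{p}_{x_s,\e}(T^nx)\geq s+\d_\e(l)+1$. Comparing with the hypothesis forces $\d_\e(l)\leq G^{\rightarrow}_\e(s)$, and since $G_\e$ is non-decreasing with $G_\e(G^{\rightarrow}_\e(s))\leq s$, we conclude $s\geq G_\e(\d_\e(l))=G'(\e,l)$, as required.

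The main obstacle I anticipate lies in the first implication, in the careful bookkeeping of the $+1$ offsets together with the discrete nature of the quantile functions $G^{\leftarrow}$ and $G^{\rightarrow}$. Since $G_\e$ is only assumed non-decreasing on $\N_0$, the naive estimate $s\geq G_{2\e}(G^{\leftarrow}_{2\e}(s)+1)\geq s$ can degenerate to equality along plateaus of $G_{2\e}$; turning this into a strict contradiction will require either exploiting the integer-valued nature of $G_\e$ or refining the length at which $G$-aperiodicity is invoked.
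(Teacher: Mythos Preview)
Your proposal is essentially the paper's own argument. For the converse direction you reproduce the paper's proof verbatim: apply the strong $\d$-closing property to obtain $\frak p_{x_s,\e}(T^nx)\ge s+\d_\e(l)+1$, compare with the assumed penetration bound, and conclude $s\ge G_\e(\d_\e(l))$ via $G_\e(G^{\rightarrow}_\e(s))\le s$. For the necessary direction the paper argues directly rather than by contradiction, but the content is identical: starting from $d_l(T^nx,x_s)<\e$ it uses periodicity and the triangle inequality to obtain $d_{l-s}(T^nx,T^{n+s}x)<2\e$, whence $G$-aperiodicity gives $s\ge G_{2\e}(l-s)$, and then asserts $l-s\le G^{\leftarrow}_{2\e}(s)$.

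The obstacle you flag at the end is genuine, and the paper's proof glosses over precisely the same point: the implication ``$G_{2\e}(l-s)\le s\Rightarrow l-s\le G^{\leftarrow}_{2\e}(s)$'' can fail on a plateau of $G_{2\e}$ at the value $s$, since $G^{\leftarrow}_{2\e}(s)=\min\{m:G_{2\e}(m)\ge s\}$ picks out the \emph{left} end of that plateau. The paper does not address this; in the applications (Bernoulli shift, geodesic flow) the functions $G_\e$ are strictly increasing, so the issue is vacuous there. If you want a clean statement in full generality, replacing $G^{\leftarrow}_{2\e}(s)$ by $G^{\rightarrow}_{2\e}(s)=\max\{m:G_{2\e}(m)\le s\}$ in the conclusion makes the implication valid without further hypotheses.
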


\begin{proof}
Let $x_s \in \cal{P}_T$ and assume that $d_l(T^nx, x_s) < \e$ for the time $n\in \N_0$ and and for a  length $l\geq s+1$. 
Hence,
\bea
	\nonumber
	d_{l-s}(T^nx, T^{n+s}x) &\leq& d_{l-s}(T^nx,  x_s) + d_{l-s}( x_s, T^{n+s}x ) 
	\\ \nonumber
	&=&  d_{l-s}(T^nx,  x_s) + d_{l-s}(T^{s} x_s, T^{n+s}x ) < 2 \e
\eea
Thus, since $\cal{T}(x)$ is $G$-aperiodic, we have $s \geq G(l-s, 2 \e) =  G_{2\e}(l-s)$.
This shows $l \leq s+ G^{\leftarrow}_{2\e}(s)$ and hence $\frak{p}_{x_s, \e}(T^nx)  \leq s + G^{\leftarrow}_{2\e}(s) +1$.

Conversely, assume that $d_l(T^nx, T^{n+s}x) < \e$ for some time $n\in \N_0$, shift $ s\in \N$ and length $l\in \N_0$.
By the strong $\d$-closing property, there exists a periodic point $x_s$ of period $s$ such that 
\be
\nonumber
	\frak{p}_{x_s, \e}(T^nx) \geq s+\d_{\e}(l) +1. 
\ee
Hence, using  assumption \eqref{BadCalF}, we have
\be
\nonumber
 	s+\d_{\e}(l) +1\leq s + G^{\rightarrow}_{2\e}(s) + 1.
\ee
This shows $s \geq G(\e, \d_{\e}(l))$, finishing the proof.
\end{proof}


\subsection{Dimensions and the aperiodic complexity}
\label{SectionDimensions}

Recall that every orbit $\cal{T}(x)$ is $F$-aperiodic for the function $F=F_x^0$, respectively $G$-aperiodic for the function $G(l, \e)=F_x^l(\e)$.
Recall also the definitions of the aperiodic complexities $\cF$ and $\cG$ in \eqref{AperiodicComplexity} via the exponential growth rates $\cF_x$ and $\cG_x$, respectively, given in \eqref{ExpRate}.

Given a nonempty subset $Y \subset X$, we equip $Y$ with the induced metric.
The following properties of the aperiodic complexity $\cF$ are easily verified.
\begin{itemize}
\item[1.] \emph{Monotonicity.} If $U \subset V \subset X$ are $T$-invariant, then $\cF(U,T\lvert_{U}) \leq \cF(V,T\lvert_{V})$.
\item[2.] \emph{Countable Stability.}	If $Y_n\subset X$, $n\in \N$, are $T$-invariant, then 
	\be
		\nonumber
		\cF(\cup_n Y_n, T\lvert_{\cup Y_n}) = \sup\{\cF(Y_n, T\lvert_{Y_n}) : n\in \N\}.
	\ee
\item[3.] \emph{Bilipschitz Invariance.} If $f: X \to f(X)=Y$ is a bi-Lipschitz map,
			then 
			\be
				\nonumber
				\cF(X, T) = \cF(Y, f \circ T\circ f^{-1}).
			\ee
\item[4.] \emph{Product.} $\max\{ \cF(X_1 , T_1),  \cF(X_2 , T_2)\}\leq  \cF(X_1 \times X_2, T_1 \times T_2) $, with the product metric on $X_1 \times X_2$. 
\end{itemize}
The same properties hold for the complexity $\cG$.

We now show that  $F$-aperiodic orbits affect the geometry of $X$ in the large,
meaning that the existence of an $F$-aperiodic, respectively a $G$-aperiodic orbit for suitable functions $F$ and $G$, 
requires that certain complexities of $X$, respectively of $(X, T)$, must be positive. 
For a good reference concerning the complexities of a space and of dynamical systems, we refer to \cite{PesinClimenhaga}.
\\

\subsubsection{Box dimension}

For $\e>0$ let $N(X,\e)$ denote the number of a maximal $\e$-separated set in $X$. 
Then the \emph{upper box dimension} (\cite{Falconer}) is given by
\be \nonumber
	\dim_B(X) = \limsup_{\e\to 0} \frac{\log(N(X, \e))}{-\log(\e)}.
\ee

\begin{proposition} 
[\cite{SchroederWeil}, Lemma 2.2]
\label{ThmBoxDim} 
For every $x \in X$ we have
\be
\nonumber
	 \cF_x \leq \dim_B(X).
\ee 
In particular, $\cF \leq \dim_B(X)$.
\end{proposition}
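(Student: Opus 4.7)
The plan is to exploit the defining property of $F_x^0(\e)$ directly: the very first $F_x^0(\e)$ points of the orbit of $x$ constitute an $\e$-separated subset of $X$, so $F_x^0(\e)$ is bounded above by $N(X,\e)$, and the inequality on $\cF_x$ then follows by taking logarithms and passing to the $\limsup$.

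More concretely, first I would unfold the definition of $F_x^0(\e)$ from \eqref{DefFx}. Since $F_x^0(\e) = \min_{n \in \N_0} s_0(T^n x, \e)$, we have $s_0(T^n x, \e) \geq F_x^0(\e)$ for every $n \geq 0$, which means
\be
\nonumber
d(T^n x, T^{n+s} x) \geq \e \qquad \text{for all } n \in \N_0 \text{ and } 1 \leq s < F_x^0(\e).
\ee
Specializing this to $n = i$ and $s = j - i$ with $0 \leq i < j \leq F_x^0(\e) - 1$, we obtain $d(T^i x, T^j x) \geq \e$ for every such pair. Hence the finite orbit segment $\{x, Tx, \ldots, T^{F_x^0(\e)-1} x\}$ is an $\e$-separated subset of $X$ of cardinality exactly $F_x^0(\e)$.

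Next, because $N(X,\e)$ denotes the maximal cardinality of an $\e$-separated set, this yields the pointwise bound $F_x^0(\e) \leq N(X,\e)$ for every $\e > 0$. Taking logarithms, dividing by $-\log \e$, and passing to the $\limsup$ as $\e \to 0$ gives
\be
\nonumber
\cF_x = \limsup_{\e \to 0} \frac{\log F_x^0(\e)}{-\log \e} \leq \limsup_{\e \to 0} \frac{\log N(X,\e)}{-\log \e} = \dim_B(X).
\ee
Taking the supremum over $x$ then yields $\cF \leq \dim_B(X)$.

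I do not expect any real obstacle here; the argument is essentially a one-line observation once the definitions are expanded. The only minor point to verify is that the convention for $N(X,\e)$ (maximal $\e$-separated sets versus minimal $\e$-spanning sets) differs only by a constant multiplicative factor and a possible shift in $\e$, neither of which affects the limiting ratio $\log(\cdot)/(-\log \e)$; so the statement is insensitive to this choice.
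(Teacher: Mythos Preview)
Your argument is correct and is essentially the same as the paper's: both observe that the first $F_x^0(\e)$ orbit points form an $\e$-separated set, giving $F_x^0(\e)\leq N(X,\e)$, and then pass to the $\limsup$. Your version is in fact slightly more careful with the indexing (using $0\leq j\leq F_x^0(\e)-1$ so that the shift is strictly less than $F_x^0(\e)$), but the idea is identical.
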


\begin{proof} 
Let $\varepsilon>0$ and $F=F_x^0$.
We claim that the set
\be
\label{Separated}
	\{T^n x : n=0, \dots, \lfloor F(\e) \rfloor \} \subset \cal{T}(x)
\ee
gives an $\e$-separated set; hence $N(X, \e)\geq F(\e)$.
In fact, for  every $0\leq s_1 < s_2\leq F(\varepsilon)$
we have $d(T^{s_1}x, T^{s_2}x) \geq \e$ since $s_2-s_1 < F(\varepsilon)$.
Taking the exponential growth rates finishes the proof.
\end{proof}


\subsubsection{Remarks on the Hausdorff-dimension}
\label{SectionHausdorff}
Let $\mu$ be a finite Borel measure on $X$ satisfying 
$\mu(B(x,r))  \leq a \cdot r^{\delta}$ for all sufficiently small $0< r \leq r_0$ and constants $a, \delta>0$.
It is well known that the exponent $\delta$ is a lower bound for the Hausdorff-dimension dim$_H(X)$ of $X$ (see \cite{Falconer}, Proposition 4.9).
Under the requirement that $\mu$ is $\delta$-Ahlfors regular (or Ahlfors-David regular), that is  $a r^{\delta} \leq \mu(B(x,r))\leq b r^{\delta}$ for all $x\in $ supp$(X)$ and $0<r<r_0$,
we next show that
\be
\nonumber
	\cF \leq \delta \leq \text{dim}_H(X).
\ee

\begin{proposition}
\label{ThmHausdorffDim}
Let $\mu$ be a  finite Borel-measure which is $\delta$-Ahlfors regular with supp$(\mu)=X$.
Then, for every $x \in X$, we have 
\be
\nonumber
	\cF_x \leq \delta.
\ee
In particular, $\cF \leq \delta$.
\end{proposition}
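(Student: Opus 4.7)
The plan is to mimic the box-dimension argument in Proposition \ref{ThmBoxDim}, but replace the counting bound $N(X,\e)$ coming from an $\e$-separated set with a packing/volume bound that uses only the \emph{lower} inequality in the Ahlfors regularity assumption. The upper bound on $\mu(B(x,r))$ is not needed for this direction.

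Fix $x \in X$ and set $F = F_x^0$. Let $\e>0$ be so small that $\e/2 \leq r_0$ (so the Ahlfors lower bound is available at that radius). As shown in the proof of Proposition \ref{ThmBoxDim}, the points
\be
\nonumber
\{T^n x : 0 \leq n \leq \lfloor F(\e) \rfloor\}
\ee
form an $\e$-separated set in $X$. Consequently, the open balls $B(T^n x, \e/2)$ for these indices are pairwise disjoint. Since $\text{supp}(\mu)=X$, each of these balls carries positive $\mu$-mass, and by the lower Ahlfors regularity bound we have $\mu(B(T^nx, \e/2)) \geq a (\e/2)^\delta$.

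Summing the measures of these disjoint balls and using $\mu(X)<\infty$ yields
\be
\nonumber
(\lfloor F(\e)\rfloor+1)\cdot a(\e/2)^\delta \;\leq\; \mu(X),
\ee
hence $F(\e) \leq \mu(X) \cdot 2^\delta / (a\,\e^\delta)$ for all sufficiently small $\e>0$. Taking logarithms, dividing by $-\log(\e)$, and passing to the $\limsup$ as $\e \to 0$ gives
\be
\nonumber
\cF_x = \limsup_{\e \to 0} \frac{\log F_x^0(\e)}{-\log \e} \;\leq\; \delta,
\ee
and taking the supremum over $x \in X$ yields $\cF \leq \delta$, which combined with the standard inequality $\delta \leq \dim_H(X)$ (already recalled in the paragraph preceding the proposition) completes the argument.

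There is no substantive obstacle: the argument is a direct packing estimate, and the only technical point to watch is to choose $\e$ small enough that $\e/2$ lies in the range where the Ahlfors lower bound $\mu(B(y,r))\geq a r^\delta$ actually applies. The $\delta$-Ahlfors hypothesis is in fact stronger than what the proof uses; only the lower mass bound, plus $\text{supp}(\mu)=X$, is really invoked.
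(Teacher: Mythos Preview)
Your proof is correct and follows essentially the same approach as the paper: use the $\e$-separated set $\{T^n x : 0 \le n \le \lfloor F(\e)\rfloor\}$ from Proposition~\ref{ThmBoxDim}, apply the lower Ahlfors bound to the disjoint balls $B(T^n x,\e/2)$, sum against $\mu(X)$, and take the growth rate. Your additional remark that only the lower mass bound (not full Ahlfors regularity) is needed is a valid sharpening of the hypothesis.
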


\begin{proof}
We know from \eqref{Separated} that the points $\{T^nx\}$ are $\e$-separated  for $n\leq F(\varepsilon)$, where $F=F_x^0$.
Thus, for $\e \leq r_0$, we have
\bea
\nonumber
	\mu(X) &\geq& \sum_{n=0}^{\lfloor F(\e) \rfloor} \mu(B(T^nx, \e/2) 
	\\ \nonumber
	&\geq&   \lfloor F(\e) \rfloor \cdot \min\{ \mu(B(T^nx, \e/2) : 0\leq n \leq F(\e)\}	
	\geq a  \lfloor F(\e) \rfloor (\e/2)^{\delta}.
\eea
Applying the logarithm, dividing by $-\log(\e)$ and taking the exponential growth rates finishes the proof.
\end{proof}

\begin{remark}
Based on the work of Boshernitzan \cite{Bosh},  Barreira, Saussol \cite{BarreiraSaussol} related the lower recurrence rate of a point,
defined by 
\be
\nonumber
	\underline{R}(x) \equiv \liminf_{\e \to 0}  \frac{ \log( s(x,\e))}{- \log(\e)},
\ee
to the Hausdorff-dimension of $X$.
In fact, they showed that if $T$ is a Borel measurable transformation on the separable metric space $X$, $\mu$ is a $T$-invariant Borel probability measure on $X$,
then $\underline{R(x)} \leq d_{\mu}(x)$ for $\mu$-almost every point $x \in X$. 
Here, $d_{\mu}$ denotes the lower-pointwise dimension of the measure $\mu$, where $d_{\mu}(x)=\delta$ for all $x$ if $\mu$ is $\delta$-Ahlfors regular.
This gives an upper bound on the recurrence for almost every point in $X$,
whereas, for an $F$-aperiodic point $x$, we (in general) have $\underline{R(x)} \geq \cF_x$.
\end{remark}


\subsubsection{Topological Entropy}

Let $T$ be a continuous map on a compact metrisable space $X$.
Let  $d$ be any metric on $X$ inducing the same topology on $X$.
For $\e>0$ and $l\in \N$, let $N_T(l,\e)$ be the number of a maximal $\e$-separated net with respect to the Bowen metric $d_l$.
Then the \emph{topological entropy $h(T)$} of $T$ is defined by
\be
\nonumber
	h(T) = \lim_{\e \to 0} \limsup_{l \to \infty} \frac{1}{l} \log(N_T(l, \e)).
\ee
The following Proposition, together with Proposition \ref{ThmBoxDim} , proves Theorem \ref{ThmOne}.

\begin{proposition}
\label{ThmTopEnt}
For every $x \in X$ we have
\be
\nonumber
	\cG_x \leq h(T). 
\ee
In particular, $\cG \leq h(T)$.
\end{proposition}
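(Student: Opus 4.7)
The plan is to mimic the proof of Proposition \ref{ThmBoxDim}, but with the Bowen metric $d_l$ replacing $d$ and with an extra passage to the exponential growth rate in $l$. The key observation is that the initial segment of the orbit of length $\lfloor F_x^l(\e) \rfloor$ is $\e$-separated in the Bowen metric $d_l$, which directly compares $F_x^l(\e)$ with $N_T(l,\e)$.

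More precisely, fix $x \in X$, $\e > 0$, and $l \in \N_0$, and write $F = F_x^l$. By the definition \eqref{DefFx} of $F_x^l$, for every $n \in \N_0$ we have $s_l(T^n x, \e) \geq F(\e)$; in other words, any two indices $0 \leq n_1 < n_2 \leq F(\e)$ satisfy $n_2 - n_1 < F(\e) \leq s_l(T^{n_1}x, \e)$, which by the definition of $s_l$ yields
\[
d_l(T^{n_1} x, T^{n_2} x) = d_l(T^{n_1}x, T^{n_2 - n_1}(T^{n_1} x)) \geq \e.
\]
Hence $\{T^n x : 0 \leq n \leq \lfloor F(\e) \rfloor\}$ is an $\e$-separated set with respect to $d_l$, and so
\[
N_T(l, \e) \geq F_x^l(\e).
\]

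Taking logarithms, dividing by $l$, and passing to $\limsup_{l \to \infty}$ gives
\[
\limsup_{l \to \infty} \frac{\log(F_x^l(\e))}{l} \leq \limsup_{l \to \infty} \frac{\log(N_T(l, \e))}{l}.
\]
Letting $\e \to 0$ and recalling the definitions of $\cG_x$ and $h(T)$ produces $\cG_x \leq h(T)$, and taking the supremum over $x \in X$ yields $\cG \leq h(T)$.

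There is no real obstacle here; the only thing to verify carefully is the $\e$-separation claim, which follows immediately from unpacking the definition of $F_x^l$ on the whole forward orbit (not merely on $x$), exactly as in Proposition \ref{ThmBoxDim} but with $d$ replaced by $d_l$.
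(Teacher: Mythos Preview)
Your proof is correct and follows essentially the same approach as the paper: you show that the first $\lfloor F_x^l(\e)\rfloor$ iterates of $x$ form an $\e$-separated set for the Bowen metric $d_l$, deduce $N_T(l,\e)\geq F_x^l(\e)$, and then pass to the exponential growth rates. The only difference is that you spell out the role of the return time $s_l$ a bit more explicitly than the paper does.
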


\begin{proof}
For $\varepsilon>0$ and $l \in \N$ let $G(l, \e)=F_x^l(\e)$.
We claim that
\be
\label{SeparatedCalF}
	\{T^nx : n=0, \dots,  \lfloor G(l, \e) \rfloor  \}\subset \cal{T}(x)
\ee
gives a $\e$-separated set in the metric $d_l$; hence $N_T(l, \e)\geq G(l, \e)$.
In fact, for every $0\leq s_1 < s_2\leq G(l,\varepsilon)$, 
we have $d_l(T^{s_1}x, T^{s_2}x) \geq \varepsilon$ since $s_2-s_1 <  G(l, \e)$.
Taking the exponential growth rates finishes the proof.
\end{proof}


\subsubsection{Volume Entropy}
\label{SectionVolumeEntropy}
Let $M=(M,d_M)$ be a compact Riemannian manifold, let $\pi : SM \to M$ the footpoint projection, where $SM$ denotes the unit tangent bundle.
Denote by $\phi: SM \times \R \to SM$ the geodesic flow on $SM$.
Given a fixed point $o \in \tilde M$, the universal cover of $M$,
let $V(o,r) = $ vol$(B(o,r))$ denote the volume of the metric ball $B(o,r)$.
Note that that the limit, called \emph{volume entropy},
\be
\label{DefVolEnt}
	\lambda \equiv \lim_{r \to \infty} \frac{1}{r}\log(V(o,r))
\ee
exists and is independent of $o$. Recall that from \cite{Manning} we have $\lambda \leq h(\phi)$,
with equality in the case that $M$ has non-positive sectional curvature.

Now let $i_M>0$ denote the injectivity radius of $M$.
For $l \geq 1$, we define a new metric on $SM$ (see Manning \cite{Manning}) by
\be
\label{NewBowenMetric}
	d_l(v, w) \equiv \max_{0 \leq t \leq l} d_M(\pi\circ \phi^tv, \pi\circ \phi^tw)
\ee
which can be seen as the Bowen metric of length $l$ with respect to $d_1$.
We need to adjust the definition of $G$-aperiodic orbits to continuous time flows and to our setting.

\begin{definition}
Fix $0<\e_0< i_M/2$, $l_0\geq0$ and let $\varphi: \R^+ \to \R^+$ be an increasing function.
Given a vector $v_0 \in SM$, the geodesic $\gamma(t) \equiv \pi \circ \phi^t(v_0)$ is called a \emph{$\varphi$-aperiodic ray} in $M$ (with respect to the parameters $\e_0$ and $l_0$) if the following condition is satisfied:
for all times $t \geq 0$, all shifts $s>\e_0$ and  lengths $l \geq l_0$, we have
\be
\label{WeakBarF}
	d_l( \phi^t v_0, \phi^{t+s}v_0) \leq \e_0  \Longrightarrow s \geq \varphi(l).
\ee
\end{definition}

Note that, up to considering the time one-map $\phi^1$, the geodesic $\gamma$ is $\varphi$-aperiodic if and only if the orbit  $\phi^{t}(v_0)$ is $G$-aperiodic (for the metric $d_l$ above):

\begin{lemma}
\label{Umrechnung}
A ray $\gamma_v$ is $\varphi$-aperiodic if and only if the orbit $\phi^t(v)$ is $G$-aperiodic;
simply set $G( l, \e)= \varphi(l)$ for $\e\leq \e_0$ in both cases.
\end{lemma}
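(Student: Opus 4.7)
The plan is to unpack both definitions and observe that they coincide as statements about the metric $d_l$ from \eqref{NewBowenMetric}. Under the identification $G(l,\e) = \varphi(l)$ for $\e \le \e_0$, both the $\varphi$-aperiodic condition and the $G$-aperiodic condition (applied to the discrete orbit under $\phi^1$) assert the same implication
\[
	d_l(\phi^t v, \phi^{t+s} v) \le \e_0 \Longrightarrow s \ge \varphi(l),
\]
differing only in that the former ranges over real parameters $t \ge 0$, $s > \e_0$, $l \ge l_0$, while the latter ranges over integer parameters $n \in \N_0$, $s \in \N$, $l \in \N_0$.

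The forward direction ($\varphi$-aperiodic $\Rightarrow$ $G$-aperiodic) is by restriction: specializing the real parameters in the $\varphi$-aperiodic condition to integer values directly yields the discrete $G$-aperiodic condition. The hypothesis $s > \e_0$ is automatic for integer $s \ge 1$ when $\e_0 < 1$ (which can be arranged since $\e_0 < i_M/2$ is a free parameter), and the restriction $l \ge l_0$ is absorbed by adopting a trivial $G_\e$ for small $l$. Conversely, the reverse direction requires approximating real parameters by integers. Given the hypothesis at real $t, s, l$, I set $n = \lceil t \rceil$ and $s' = \lfloor s \rfloor$; the inclusion $[n, n+l-1] \subset [t, t+l]$ together with the definition of $d_l$ yields $d_{l-1}(\phi^n v, \phi^{n+s} v) \le \e_0$, and the $1$-Lipschitz continuity of the geodesic flow in time gives that $d_{l-1}(\phi^n v, \phi^{n+s'} v)$ exceeds $\e_0$ by at most $|s - s'| < 1$. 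Applying the $G$-aperiodic condition (with a mildly relaxed $\e$) produces $s' \ge \varphi(l-1)$, and monotonicity of $\varphi$ yields $s \ge \varphi(l)$ up to the bounded rounding error.

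The main obstacle is this rounding in the reverse direction, but since $\varphi$ is monotone and the errors are $O(1)$, while the paper is ultimately interested in the exponential growth rate $\cG_v = \lim_{\e \to 0} \limsup_{l \to \infty} \log \varphi(l) / l$, the bounded errors from integer approximation are harmless. The equivalence is therefore essentially tautological after matching the parameter domains.
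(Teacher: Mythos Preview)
The paper offers no proof: it declares the lemma ``immediate'' and moves on. Your proposal is therefore already more careful than the paper itself, and your identification of both conditions with the same implication on the metric $d_l$ of \eqref{NewBowenMetric} is exactly the intended content; the paper's phrase ``up to considering the time-one map $\phi^1$'' is doing the same work as your rounding argument, just without supplying the details.

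One small caveat: in the reverse direction you invoke the $G$-aperiodic hypothesis ``with a mildly relaxed $\e$,'' but the lemma only fixes $G(l,\e)=\varphi(l)$ for $\e\le\e_0$, so relaxing $\e$ beyond $\e_0$ yields no information. The honest fix is to allow a harmless $O(1)$ discrepancy between the $\e_0$ parameters on the two sides (equivalently, to state the equivalence up to bounded shifts in $\e_0$, $l_0$, and $\varphi$). This is cosmetic rather than a real gap: as you observe, only the exponential growth rate $\cG_v$ matters, and at that level the equivalence is genuinely tautological.
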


\noindent The proof of the Lemma is immediate and we are  interested in the following Theorem.

\begin{theorem}
\label{ThmVolEnt}
Let $M$ be of nonpositive curvature.
Given a vector $v_0$ satisfying \eqref{WeakBarF} for the function $\varphi$,
we have
\be
\nonumber
	 \limsup_{l \to \infty} \frac{1}{l} \log(\varphi(l)) \leq \lambda.
\ee
\end{theorem}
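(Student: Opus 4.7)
The plan is to give a direct proof by lifting the orbit of $v_0$ to the universal cover $\tilde M$ and running a volume-counting argument in the spirit of Manning's proof that $\lambda \leq h(\phi)$, using CAT(0) convexity rather than appealing to Proposition \ref{ThmTopEnt} and Manning's theorem as a black box.

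First, I fix a lift $\tilde v_0 \in S\tilde M$ of $v_0$, set $\tilde o = \pi(\tilde v_0)$, let $D = \mathrm{diam}(M)$, and choose a compact fundamental domain $K \subset B(\tilde o, D)$ for the deck group action. After possibly thinning out the sampling by a factor $1/\e_0$ (which only affects a multiplicative constant and hence not the exponential rate), I may assume that for each integer shift $s \in \{1, 2, \ldots, N_l\}$ with $N_l := \lfloor \varphi(l) \rfloor$, the shift satisfies $s > \e_0$ so that \eqref{WeakBarF} applies. For each such $s$, I pick a lift $\tilde v_s \in S\tilde M$ of $\phi^s v_0$ whose footpoint $\pi(\tilde v_s)$ lies in $K$. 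The endpoint $\tilde y_s := \pi \circ \tilde\phi^l(\tilde v_s)$ then lies in $B(\tilde o, D+l) \subset \tilde M$.

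The key step uses nonpositive curvature via convexity of the distance function in the CAT(0) space $\tilde M$: if indices $s_1 \neq s_2$ satisfy both $d_{\tilde M}(\pi \tilde v_{s_1}, \pi \tilde v_{s_2}) < \e_0/2$ and $d_{\tilde M}(\tilde y_{s_1}, \tilde y_{s_2}) < \e_0/2$, then the convex function $t \mapsto d_{\tilde M}(\pi\tilde\phi^t\tilde v_{s_1}, \pi\tilde\phi^t\tilde v_{s_2})$ is bounded by $\e_0/2$ throughout $0 \le t \le l$. Projecting to $M$ (using $\e_0 < i_M/2$, so small balls in $\tilde M$ embed into $M$) yields $d_l(\phi^{s_1}v_0, \phi^{s_2}v_0) < \e_0$, contradicting \eqref{WeakBarF} since $1 \le |s_1-s_2| \le N_l \le \varphi(l)$. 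Now cover $K$ by $C_1 = C_1(\e_0, D)$ balls of radius $\e_0/2$; by pigeonhole, some ball contains at least $N_l/C_1$ of the footpoints $\pi \tilde v_s$, and the corresponding endpoints $\tilde y_s$ form an $\e_0/2$-separated set in $B(\tilde o, D+l)$.

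A standard packing bound (the disjoint $\e_0/4$-balls around these separated endpoints each have volume at least some $v_* > 0$, where $v_*$ depends only on $M$ and $\e_0$ by compactness of $M$) gives $N_l \le C_1 \cdot v_*^{-1} \cdot V(\tilde o, D + l + \e_0/4)$. Taking logarithms, dividing by $l$, letting $l \to \infty$, and invoking \eqref{DefVolEnt} produces the desired bound $\limsup_{l\to\infty} \log(\varphi(l))/l \le \lambda$. The main obstacle I anticipate is the convexity-to-separation step: one must carefully check that the CAT(0) bound on $\tilde M$ really descends to a $d_l$-bound on $SM$, which is exactly where $\e_0 < i_M/2$ is used; the bookkeeping associated to the real-valued shift constraint $s > \e_0$ in \eqref{WeakBarF} and the gap between shifts in $d_l$ on $SM$ versus $d_{\tilde M}$ on footpoints of $\tilde M$ are secondary issues that are absorbed into the constants.
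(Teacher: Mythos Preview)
Your proposal is correct and follows essentially the same strategy as the paper's proof: lift to $\tilde M$, pull back samples of the orbit to a compact fundamental domain via deck transformations, use CAT(0) convexity of the distance to convert endpoint-closeness into $d_l$-closeness (thereby contradicting \eqref{WeakBarF}/\eqref{BeziehungLift}), and finish with a volume/packing count against $V(\tilde o, l + \text{const})$. The only differences are organizational---the paper samples at spacing $2R$ (with $R=\mathrm{diam}(K)$) and counts via product cells $\mathcal S_K\times\mathcal S_A$ in an annulus, whereas you sample at integer spacing and pigeonhole the footpoints first; also note the harmless slip that covering $K$ by balls of radius $\e_0/2$ gives footpoint distances $<\e_0$ rather than $<\e_0/2$, but convexity with $<\e_0$ at both ends already yields $d_l\le\e_0$, which is exactly the hypothesis of \eqref{WeakBarF}.
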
 

We refer to \cite{Eberlein} for further background and details of the following.
In this setting, by nonpositive curvature, we have that the universal cover $\tilde M$ of $M$  is diffeomorphic to $\R^n$
and the (free) fundamental group $\Gamma \equiv \pi_1(M)$ of $M$ can be identified with a cocompact discrete subgroup of the isometry group of $\tilde M$
acting freely and properly discontinuously on $\tilde M$.
Note that the induced distance function $d=d_{\tilde M}$ on $\tilde M$ is convex.
The lifted flow, the footpoint projection and the lifted metrics $d_l$ on $S \tilde M$ are  denoted by the same symbols.

For later use we remark that $v_0 \in SM$ satisfies \eqref{WeakBarF} 
if and only if the following is true for any lift $\tilde v_0 \in S \tilde M$ of $v_0$:
Let $\gamma(t)= \pi \circ \phi^t(\tilde v_0)$  be the geodesic ray in $\tilde M$ determined by $\tilde v_0$.
Then for all times $t_0\in \R^+$, for all $ \psi \in \Gamma$, for all lengths $l \geq l_0$ and all shifts $s>\e_0$,
we have 
\be
\label{BeziehungLift}
	\max_{0\leq t \leq l}d(\gamma(t_0+s+t), \psi( \gamma(t_0 + t ))\leq \e_0 \Longrightarrow s\geq \varphi(l).
\ee
In fact, the left hand side of \eqref{BeziehungLift} reads $d_l(\phi^{t_0 + s}\tilde v_0, \phi^{t_0} \tilde v_0) < \e_0$
which is the case if and only if  $d_l(\phi^{t_0 + s} v_0, \phi^{t_0} v_0) \leq \e_0$  since $\e_0< i_M/2$.

\begin{proof}[Proof of Theorem \ref{ThmVolEnt} (which can be found in \cite{WeilDiss})]
Let $K$ be a compact fundamental domain of $\Gamma$ in $\tilde M$ of diameter $R$.
We may assume that the lift $\tilde v_0$ of $v_0$ is based at a point $p_0$ in $K$ and let $\gamma(t) \equiv \pi \circ \phi^t(\tilde v_0)$ be a geodesic ray in $\tilde M$ satisfying \eqref{BeziehungLift}.
Let $l$ be sufficiently large with respect to $R$, say $l \geq \max\{5R, l_0\}$.
Finally, consider the annulus $A= B_{l + 3R}(p_0) - B_{l-3R}(p_0)$ around $p_0$
and let $\cal{S}_K$ and $\cal{S}_A$  be maximal $\e_0/4$-separated sets of $K$ and $A$ respectively.
In particular, 
\be
\nonumber
	K \subset \bigcup_{x\in \cal{S}_K}B_{\e_0/2}(x), \ \ \ \ \ A \subset \bigcup_{x\in \cal{S}_A}B_{\e_0/2}(x).
\ee
Consider the vectors $\dot \gamma(t_i)= \phi^{t_i}(\tilde v_0)$ at the times $t_i \equiv 2Ri$, $i\in N_0$, along the geodesic $\gamma$.
For every $i\in \N_0$ we can find an isometry $\psi_i\in \Gamma$ such that 
the vector $v_i\equiv d\psi_i (\phi^{t_i}(\tilde v_0)) \in SK$
and since 
\be
\nonumber
	\lvert t_i - t_j \rvert \geq 2R = 2 \text{ diam}(K)
\ee
for $i \neq j$ we have that $\psi_i \neq \psi_j$.
Note that the endpoints of the geodesic of length $l$ determined by $v_i$ belong to $K$ and $A$ respectively, 
\be
\nonumber
	e^i_-\equiv \pi (v_i) = \psi_i(\gamma(t_i))  \in K, \ \ \  e^i_+ \equiv \pi(\phi^l(v_i))  =  \psi_i(\gamma(t_i + l)) \in A.
\ee 
In particular, there exists a pair $(x, y) \in \cal{S}_K \times \cal{S}_A$ such that $e^i_-\in B_{\e_0/2}(x)$ and $e^i_+\in B_{\e_0/2}(y)$.
We claim that, setting
\be
\nonumber
	N=\max\{i \in \N_0: 2Ri < \varphi(l)\},
\ee 
for every pair $(x,y) \in  \cal{S}_K \times \cal{S}_A$ there exists at most one pair $(e^i_-,e^i_+) \in B_{\e_0/2}(x) \times B_{\e_0/2}(y)$ when $i\leq N$.
Assuming the claim, this gives the estimate 
\be
\label{Bound1}
	\lvert \cal{S}_K \rvert \lvert \cal{S}_A \rvert = \lvert \cal{S}_K \times \cal{S}_A \rvert\geq N \geq (N+1)/2 \geq \varphi(l)/4R.
\ee

For the claim, assume that for $i< j\leq N$ we have $e^i_-, e^j_-\in B_{\e_0/2}(x)$ and $e^i_+, e^j_+\in B_{\e_0/2}(y)$.
Hence, $d(e^i_-,e^j_-)<\e_0$ and $d(e^i_+,e^j_+)<\e_0$.
Convexity of the distance function $d$ implies
\be
\nonumber
	 d_l(v_i, v_j) = \max_{0 \leq t \leq l} d(\pi(\phi^t(v_i)), \pi(\phi^t(v_j))) \leq \max\{d(e^i_-,e^j_-), d(e^i_+,e^j_+) \}\leq \e_0.
\ee
But setting $\psi \equiv \psi_j^{-1} \circ \psi_i$, where $\psi \neq id$ by the above, we get for the shift $s=t_j - t_i$,
\be
\nonumber
	\max_{0 \leq t \leq l} d(\gamma(t_i + s+  t), \psi( \gamma(t_i  +t) ) = \max_{0 \leq t \leq l} d( \psi_j( \gamma(t_j +t)), \psi_i(\gamma(t_i +  t))  =
	d_l(v_i, v_j) \leq \e_0.
\ee
Therefore, \eqref{BeziehungLift} %
implies
\be
\nonumber
	2RN\geq   2R(j-i) =  t_j - t_i \geq \varphi(l);
\ee
a contradiction to the definition of $N$, showing the claim.

On the other hand, let $a \equiv \min_{x\in K} $ vol$(B(x, \e_0/8) = \min_{x \in \tilde M}  $ vol$(B(x, \e_0/8)>0$.
Since $\cal{S}_A$ is $\e_0/4$-separated, for the $\e_0$-neighborhood $\cal{N}_{\e_0}(A)$ of $A$,
\be
\nonumber
	\text{vol}(\cal{N}_{\e_0}(A)) \geq \sum_{x \in \cal{S}_A}  \text{vol}(B(x, \e_0/8)) \geq \lvert \cal{S}_A \rvert a,
\ee
and similarly $\text{vol}(\cal{N}_{\e_0}(K))\geq \lvert \cal{S}_K \rvert a$.
Finally, using \eqref{Bound1}, this shows that
\bea
\label{IndependentEps}
	\text{vol}(B(p_0, l + 3R + \e_0)) &\geq& \text{vol}(\cal{N}_{\e_0}(A))) \geq \frac{a \varphi(l)}{4R  \lvert \cal{S}_K \rvert} 
	\\ \nonumber
	&\geq& \frac{a^2}{4R \text{ vol}(\cal{N}_{\e_0}(K))}  \varphi(l)  \equiv \bar c \cdot \varphi(l)
\eea
for a universal constant $\bar c = \bar c(M, \e_0) >0$.
Since $\lambda$ is independent of the point $p_0$,
taking the exponential growth rates as $l\to \infty$ finishes the proof.
\end{proof}

Formula \eqref{IndependentEps} also gives rise to several further corollaries.
In fact, it shows that every function $\varphi$, such that $\varphi$-aperiodic rays exist, is bounded by
\be
\nonumber
	\varphi(l) \leq \tfrac{1}{\bar c} \text{vol}(B(p_0, l + 3R + \e_0)).
\ee 

Morever, define the \emph{orbital counting function} $N_{\Gamma}(x)$  of $\Gamma$, $x \in \tilde M$ and $l \in \R^+$, by
\be
\nonumber
	N_{\Gamma}(x, l) \equiv \lvert \{ \psi \in \Gamma: d(x, \psi(x)) \leq l \} \rvert.
\ee
The quantity 
\be
\nonumber
	\d=\d_x \equiv \limsup_{l \to \infty} \frac{1}{l} \log(N_{\Gamma}(x, l))
\ee
is independent of the point $x$ and, in the case of constant negative curvature, in fact it equals the critical exponent
 $\d(\Gamma)$ of the Poincare series of $\Gamma$; see \cite{Nicholls} for further information.

\begin{corollary}
Assume there is a  $\varphi$-aperiodic geodesic ray.
Then
\be
\nonumber
	  \limsup_{l \to \infty} \frac{1}{l} \log(\varphi(l)) \leq \d(\Gamma).
\ee
\end{corollary}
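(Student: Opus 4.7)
The plan is to combine the volume growth bound \eqref{IndependentEps} obtained inside the proof of Theorem \ref{ThmVolEnt} with a standard comparison between the volume of balls in $\tilde M$ and the orbital counting function of $\Gamma$. Recall that \eqref{IndependentEps} yields
\[
	\varphi(l) \leq \tfrac{1}{\bar c}\,\text{vol}(B(p_0, l + 3R + \e_0))
\]
for any $p_0 \in \tilde M$ and for all sufficiently large $l$, where $R$ is the diameter of a compact fundamental domain $K$ of $\Gamma$. Hence the whole game is reduced to estimating the volume of a large ball in the universal cover in terms of $N_{\Gamma}(p_0, \cdot)$.

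For this, I would use the cocompactness of the $\Gamma$-action. Since the $\Gamma$-translates $\{\psi(K)\}_{\psi \in \Gamma}$ tile $\tilde M$, any translate $\psi(K)$ that meets $B(p_0, L)$ is contained in $B(p_0, L + R)$, and in particular satisfies $d(p_0, \psi(p_0)) \leq L + 2R$. Summing volumes gives
\[
	\text{vol}(B(p_0, L)) \leq N_{\Gamma}(p_0, L + 2R) \cdot \text{vol}(K).
\]
Setting $L = l + 3R + \e_0$ and combining with the bound above, I obtain a constant $C = C(M, \e_0) > 0$ such that
\[
	\varphi(l) \leq C \cdot N_{\Gamma}(p_0, l + 5R + \e_0)
\]
for all sufficiently large $l$.

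Finally, I would apply the logarithm, divide by $l$, and pass to the $\limsup$ as $l \to \infty$. The constant $C$ and the additive shift $5R + \e_0$ disappear in the limit, yielding
\[
	\limsup_{l \to \infty} \frac{1}{l}\log(\varphi(l)) \leq \limsup_{l \to \infty} \frac{1}{l}\log\bigl(N_{\Gamma}(p_0, l + 5R + \e_0)\bigr) = \delta_{p_0} = \delta(\Gamma),
\]
where the last equality uses the fact, quoted just before the corollary, that $\delta_x$ is independent of the basepoint $x$ and agrees with the critical exponent of the Poincar\'e series of $\Gamma$. No step is genuinely hard: the only subtlety is the fundamental-domain covering argument, where one must be a bit careful with the additive $R$-shifts between distances of basepoints and distances of points in their translated fundamental domains, but these shifts are harmless for the exponential growth rate.
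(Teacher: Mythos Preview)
Your proof is correct and follows essentially the same approach as the paper's: both invoke \eqref{IndependentEps} and then compare the volume of large balls in $\tilde M$ to the orbital counting function via the tiling by $\Gamma$-translates of a compact fundamental domain, concluding by taking exponential growth rates. If anything, you are more careful than the paper with the additive $R$-shifts in the covering argument (the paper writes $d(p_0,\psi(p_0))\leq l$ where your $L+2R$ is the precise statement), though as you note this is immaterial in the limit.
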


\begin{proof}
By \eqref{IndependentEps}, it suffices to show that $\d(\Gamma) \geq \lambda$ (in fact, we even have $\lambda = \d(\Gamma)$).
Since $K$ is a fundamental domain, the ball $B(p_0, l)$ is covered by the sets $\psi(K)$ with $d(p_0, \psi(p_0)) \leq l$.
Moreover, $k \equiv \text{vol}(M)=\text{vol}(K)>0$, showing that
\be
\nonumber
			N_{\Gamma}(p_0, l) \geq \frac{\text{vol}(B(p_0, l)}{k}.
\ee
Taking the exponential growth rates finishes the proof.
\end{proof}


\section{Examples}
\label{Apps}

In this section, we consider  the geodesic flow on the torus, the Bernoulli shift and, as the central example, the geodesic flow on compact hyperbolic manifolds.
We discuss our results and set them in context to appropriate models of Diophantine approximation. 

\subsection{Geodesic flow and rotation on the torus}
\label{Torus}
Let $T^n=\R^n/\Z^n$ denote the flat torus where $d$ denotes the induced metric on $T^n\times \R^n$ of the product metric on $\R^n \times \R^n$.
For $x \in \R^n$, let $\bar x = x $ mod $\Z^n \in T^n$.
Consider the map on the tangent bundle $TT^n=T^n \times \R^n$ 
\be
	\phi : T^n \times \R^n \to T^n \times \R^n, \ \ \ (\bar x, \alpha) \mapsto ( \overline{x + \alpha}, 
	 \alpha),
\ee
which can be viewed as the time-one map of the geodesic flow on the flat torus.
Fixing the `direction' $\alpha$, this system actually corresponds to the rotation $R_{\alpha}$ on the torus $T^n$, however,
in order to discuss our conditions we chose to represent it in this form.

Since the topological entropy of the system equals zero,
there exists no $G$-aperiodic orbit for an exponentially increasing $G$ by Proposition \ref{ThmTopEnt}.
However, the situation is very different for $F$-aperiodic orbits and linked to classical Diophantine approximation.

Given a vector $\alpha \in \R^n$, let $\phi_{\alpha} : \R^n \to \R^n$, $\phi_{\alpha}(x) = x + \alpha$
be the lift of $R_{\alpha}$, that is to say of $\phi$ with direction restricted to $\alpha$.
Note that the distance of the  two vectors $(x,\alpha)$ and $(y, \alpha)$ equals the distance between their base points.
Then $\phi_{\alpha}$ acts on every orbit $\cal{T}(x)$ as an isometry; that is,
$\lVert \phi_{\alpha}^n(x) - \phi_{\alpha}^{n+s}(x) \rVert = \lVert x - \phi^{s}_{\alpha}(x) \rVert$.
Moreover an orbit $\phi^{\N_0}(x, \alpha)$ is $F$-aperiodic if and only if the orbit $\phi^{\N_0}(0, \alpha)$ is.
It thus suffices to look at the recurrence of the point $0 \in \R^n$.
In fact, for $\e>0$ sufficiently small and $s \in \N$, we have
\bea
\label{ClassicalDA}
	\bar d((\bar 0, \alpha), \phi^s(\bar 0, \alpha)) < \e \iff \exists p \in \Z^n : \e > \lVert p - \phi_{\alpha}^s(0) \rVert = \lVert  s \alpha - p  \rVert 
\eea
Recall that $\alpha$ is a badly approximable vector if there exists a constant $c=c(\alpha)>0$ with
\be
\nonumber
\label{ClassicalBad}
	\lVert  s \alpha - p  \rVert  \geq \frac{c}{s^{1/n}}
\ee
for all $s \in \N$, $p \in \Z^n$.
Thus, if $\a$ is badly approximable, we see that $s > c^n \e^{-n} \equiv F_{\alpha}(\e)$.
Hence $(\bar 0, \alpha)$ gives a $F_{\alpha}$-aperiodic orbit.

If conversely $(\bar 0, \alpha)$ is $F$-aperiodic for a function $F(\e) = c \e^{-n}$,
then \eqref{ClassicalDA} shows for every $s\in \N$, $p\in \Z^n$,
\be
\nonumber
	s \geq F( \lVert  s \alpha - p  \rVert ) = c \lVert  s \alpha - p  \rVert^{-n}
\ee
or in other words, $\lVert  s \alpha - p  \rVert \geq \frac{c^{1/n}}{s^{1/n}}$ and $\alpha$ is badly approximable with $c(\a) \geq c^{1/n}$.
This classifies $F_{\alpha}$-aperiodic orbits in terms of the approximation constant of a badly approximable  $\alpha \in \R^n$.

The exponential growth rate of $F_{\alpha}$ (defined above) equals $n$ which is the box-dimension of the tangent space $T_{\bar 0}T^n=\R^n$ at $\bar 0\in T^n$ 
which may be viewed as the space of directions.
This exponential growth rate is in fact the largest possible by Proposition \ref{ThmBoxDim},
which can also be seen by the following stronger result:
if $\cH$ denotes the Hurwitz-constant, 
\be
\nonumber
	\cH \equiv \sup\{ c(\a) : \a\in \R^n \text{ is badly approximable}\},
\ee
of the spectrum of badly approximable vectors
then, by the arguments above, no $F$-aperiodic orbit can exist for $F(\e)= c^n \e^{-n}$ with $c>\cH$.

Note also that periodic points $(\bar 0, \beta)$ of period $q$ correspond to rational vectors $p/q \in \Q^n$, $p \in \Z^n$, $q \in \N$,
and an $F$-aperiodic orbit must avoid each of them:
More precisely, restrict again to the space of directions $\{(\bar 0, \alpha) : \alpha \in \R^n\}$, identified with $\R^n$.
The critical neighborhood of a periodic point  $p/q \in \Q^n$ and for the function $F(\e) = c \e^{-n}$ with $F^{-1}(q) = (c/q)^{1/n}$ is readily determined  (similar to \eqref{ClassicalDA}) as 
\be
\nonumber
	 \cal{N}_{\frac{p}{q}}( F)=\{ \alpha \in \R^n : \lVert \alpha - p/q \rVert < \frac{c^{1/n}}{q^{1+1/n}} \}.
\ee
Moreover, we have the following \emph{Closing Lemma}, establishing the $\d$-closing property with $\delta(\e) = 2 \e$; see \eqref{ClosingProperty}.

\begin{lemma}
Assume that $d(\phi^s((\bar x, \alpha)), (\bar x, \alpha))< \e$ for a sufficiently small $\e>0$.
Then  there exists a vector $p \in \Z^n$ such that $\phi^s(\bar x, p/s) = (\bar x, p/s)$ with $d( (\bar x, \alpha), (\bar x, p/s) ) < \frac{1}{s} \e $
and  $d( \phi^s(\bar x, \alpha), (\bar x, p/s) ) < (1+\frac{1}{s}) \e $.
\end{lemma}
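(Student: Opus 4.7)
The plan is to unwind the definition of $\phi$ and the product metric on $T^n \times \R^n$, and then to exhibit the periodic direction $p/s$ directly from the integer vector $p$ that realizes the distance $d_{T^n}(\overline{x+s\alpha}, \bar x)$. Since $\phi$ does not alter the direction coordinate, we have $\phi^s(\bar x, \alpha) = (\overline{x + s\alpha}, \alpha)$, so the hypothesis $d(\phi^s(\bar x,\alpha), (\bar x,\alpha)) < \e$ says exactly that the first coordinates are $\e$-close in $T^n$ while the second coordinates agree. For $\e$ smaller than (say) half the injectivity radius of $T^n$, this distance is realized by a unique lift, i.e.\ there exists $p \in \Z^n$ with $\lVert s\alpha - p\rVert < \e$.

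Next I would define the candidate periodic point $(\bar x, p/s)$ and check the periodicity: since $s\cdot (p/s) = p \in \Z^n$, the base point is unchanged mod $\Z^n$, so $\phi^s(\bar x, p/s) = (\overline{x+p}, p/s) = (\bar x, p/s)$. Thus $(\bar x, p/s)$ is fixed by $\phi^s$ as required.

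It then remains to verify the two stated distance bounds using the product metric, which is the routine computational step. For the first: the base points agree, and the direction components differ by $\lVert \alpha - p/s\rVert = \tfrac{1}{s}\lVert s\alpha - p\rVert < \e/s$. For the second: the base points differ by $d_{T^n}(\overline{x+s\alpha}, \bar x) \leq \lVert s\alpha - p\rVert < \e$ (using the same $p$), while the direction components again differ by $\lVert \alpha - p/s\rVert < \e/s$; summing gives the bound $(1 + 1/s)\e$.

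There is essentially no hard step — the only subtlety is the qualifier \emph{sufficiently small $\e$}, which I would make precise by demanding $\e$ less than half the shortest nonzero lattice vector (i.e.\ $\e < 1/2$), so that the lift $p$ of the displacement $s\alpha$ minimizing $\lVert s\alpha - p\rVert$ is well-defined and genuinely realizes the torus distance. Everything else is an application of the triangle inequality for the product metric, so the main obstacle is really just bookkeeping of constants.
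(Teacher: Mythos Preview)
Your proof is correct and follows essentially the same approach as the paper: lift $\bar x$ to $\R^n$, read off the integer vector $p$ with $\lVert s\alpha - p\rVert < \e$, observe that $(\bar x, p/s)$ has period $s$, and compute the two distances componentwise in the product metric. Your added remark pinning down ``sufficiently small $\e$'' as $\e<1/2$ is a nice clarification the paper leaves implicit.
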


\begin{proof}
Let $x \in \R^n$ be a lift of $\bar x$.
The assumption reads that there exists a vector $p \in \Z^n$ with $\lVert x + s \alpha - (x + p) \rVert < \e$.
Clearly, $(\bar x, p/s)$ is periodic of period $s$.
Moreover, 
\be
\nonumber
	\lVert (x, p/s) - (x, \alpha) \rVert_{\R^n \times \R^n} = \lVert p/s - \alpha \rVert = 1/s  \lVert p - s\alpha \rVert  < \e/s
\ee
as well as 
\be
\nonumber
	\lVert (x+p, p/s) - (x + s\alpha, \alpha) \rVert_{\R^n \times \R^n} = \lVert (p - s \alpha, p/s - \alpha ) \rVert_{\R^n \times \R^n}< (1+\frac{1}{s}) \e,
\ee
finishing the proof.
\end{proof}


\subsection{Bernoulli shift}

For $n\geq 1$, let $\Sigma=\{1,\dots ,n\}^{\N}$ be the set of one-sided sequences in symbols from $\{1,\dots ,n\}$.
Let $T$ denote the shift and  $d$ be the metric on $\Sigma$ given by $d(w, w)\equiv0$ and 
\be
\nonumber
	d(w, \bar w)\equiv e^{-\min\{ i \geq 1: w(i) \neq \bar w(i) \}}  \ \ \ \text{ for } w \neq \bar w.
\ee
 
In our earlier work we showed the following existence theorem (stated for two sides sequences but also shown for one sided sequences).

\begin{theorem}[\cite{SchroederWeil}, Theorem 3.3]
\label{ExistenceWords}
Let $\varphi : \N \to \N$ be a non-increasing function such that
\be
\nonumber
	\limsup_{l \to \infty} \frac{1}{l} \log(\varphi(l)) < \log(n).
\ee
Then there exists a length $l_0 \in \N$ and a sequence $w \in \Sigma$ satisfying for every $l_0\leq l \in \N$ 
\be
\label{AperiodicWord}
	d(T^nw, T^{n+s}w) \leq e^{-(l+1)} \Longrightarrow s\geq \varphi(l).
\ee
\end{theorem}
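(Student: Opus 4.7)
I choose $\alpha\in(1,n)$ with $\limsup_{l\to\infty}\tfrac{1}{l}\log\varphi(l)<\log\alpha$, so that $\varphi(l)\le\alpha^l$ for every $l$ above a threshold that I will take as (a lower bound for) $l_0$; the precise size of $l_0$ will be forced by the counting estimate below. The condition \eqref{AperiodicWord} is equivalent to requiring that any two occurrences of a common factor of length $l\ge l_0$ in $w$ be separated by at least $\varphi(l)$ positions. My plan is to construct $w$ by a greedy, symbol-by-symbol extension, using a counting lemma to keep the number of forbidden next letters strictly below $n$ at each step.

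Suppose $w(1),\ldots,w(N)$ has been built and already satisfies the aperiodicity requirement for all $(n',s,l)$ with $l\ge l_0$, $s\ge 1$, $l+s\le N$. For each shift $s\ge 1$ let $L(s)$ denote the length of the longest common suffix of $w(1..N)$ and $w(1..N-s)$. Appending $c:=w(N+1)$ creates a new violation exactly when $c=w(N+1-s)$ for some $s$ with $L(s)\ge l_0-1$ and $s<\varphi(L(s)+1)$; call such $s$ \emph{bad}, and $c$ is \emph{forbidden} iff it arises in this way. It suffices to show the set of forbidden $c$'s has cardinality at most $n-1$.

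The key tool is a \emph{backward-extension observation}: if a length-$(l-1)$ factor $u$ appears at two positions $P_1<P_2$ within a window of length $\varphi(l)$, then $w(P_1-1)\ne w(P_2-1)$, for otherwise the length-$l$ factors $w(P_i-1)u$ would coincide and repeat at distance $<\varphi(l)$, contradicting the aperiodicity of $w(1..N)$ at length $l$. Applied to the tail $u_{l-1}:=w(N-l+2..N)$ inside the window associated to shifts $s<\varphi(l)$ (and excluding the reference extension with letter $w(N-l+1)$), this yields at most $n-1$ bad shifts with $L(s)=l-1$ for each $l\ge l_0$. Moreover, because $w(1..N)$ already satisfies aperiodicity at length $L(s)$, one has $s\ge \varphi(L(s))$, so bad shifts with distinct values of $L(s)$ lie in the disjoint ranges $[\varphi(l-1),\varphi(l))$.

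The main obstacle is converting the per-scale bound ``at most $n-1$'' into a single bound $<n$ on the set of forbidden \emph{letters}. A naive sum over $l_0\le l\le \log_\alpha N$ grows with $N$, which is useless. The way to close the gap is to use a second round of aperiodicity: the forbidden $c$'s are the \emph{forward} extensions of $u_{l-1}$ at the admissible $P$'s, and two distinct in-window occurrences of $u_{l-1}$ cannot share a forward extension without producing a length-$l$ repeat at distance $<\varphi(l)$. Pairing this forward-distinctness with the backward-extension bound and choosing $l_0$ large enough that the residual $\sum_{l\ge l_0}(\alpha/n)^l$ is small, one obtains the estimate
\[
|\{\text{forbidden }c\}|\ \le\ (n-1) + \sum_{l\ge l_0}\text{(contribution at scale $l$)}\ <\ n
\]
uniformly in $N$. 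Hence the greedy extension never blocks, yielding the required infinite $w$; a closing compactness/König argument is not needed once the uniform count is in hand.
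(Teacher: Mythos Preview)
The paper does not prove this theorem; it is quoted from the authors' earlier work \cite{SchroederWeil}, so there is no in-paper argument to compare against. That said, your greedy construction has a genuine gap.

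Your backward- and forward-extension observations are correct at a \emph{fixed} scale $l$, but they do not combine across scales into the worst-case bound $|\{\text{forbidden }c\}|<n$ that a letter-by-letter extension needs. Forward-distinctness goes the wrong way: it says that the forbidden letters coming from bad shifts within a single window are pairwise \emph{different}, which is a lower bound on their number, not an upper bound. And the sum $\sum_{l\ge l_0}(\alpha/n)^{l}$ you invoke is an averaging object; in a deterministic argument the number of forbidden letters is an integer, and nothing you have written keeps it below $n$. Indeed it can equal $n$: take $n=2$, $l_0=2$, $\varphi(2)=3$, $\varphi(3)=5$ (so $\varphi(l)\le\alpha^{l}$ with $\alpha=\sqrt 3<2$ already at $l_0$). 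The word $w=110010$ is $\varphi$-aperiodic, yet both extensions fail: appending $0$ produces the length-$3$ factor $100$ repeated at shift $3<\varphi(3)$, while appending $1$ produces the length-$2$ factor $01$ repeated at shift $2<\varphi(2)$. So a greedy walk can hit a dead end, and the displayed inequality is false as stated.

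What does work is a global counting recursion followed by compactness. Let $g(N)$ be the number of $\varphi$-aperiodic words of length $N$. A word of length $N$ with $\varphi$-aperiodic prefix of length $N-1$ and a new violation at scale $l$ and shift $s<\varphi(l)$ is determined by its first $N-l$ letters (if $s<l$ the suffix of length $l$ is $s$-periodic and its initial block lies in $w(1..N-l)$), and that prefix of length $N-l$ is again $\varphi$-aperiodic. Hence
\[
g(N)\ \ge\ n\,g(N-1)\ -\ \sum_{l\ge l_0}\varphi(l)\,g(N-l).
\]
Choosing $\beta\in(\alpha,n)$ and $l_0$ large enough that $\beta\sum_{l\ge l_0}(\alpha/\beta)^{l}\le n-\beta$, induction gives $g(N)\ge c\,\beta^{N}>0$ for all $N$; then K\"onig's lemma (compactness of $\Sigma$) produces the infinite sequence. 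The geometric tail you were reaching for lives \emph{here}, in the recursion for $g(N)$, not in a per-step bound on forbidden letters --- and the compactness step you dismissed is precisely what is needed, because individual greedy paths can die.
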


\noindent
A sequence $w\in \Sigma$, satisfying \eqref{AperiodicWord} for a non-increasing function $\varphi : \N \to \N$ and $l_0 \in \N$, is called \emph{$\varphi$-aperiodic}.

We remark that due to the definition of $d$, we have that
\be
\nonumber
	d(w, w') \leq e^{-(l + k + 1)} \iff d_l(w, w') \leq e^{-(k+1)}.
\ee
Hence, the condition \eqref{AperiodicWord} can readily be translated into the following.

\begin{lemma}
Let $w$ be $\varphi$-aperiodic.
Then $w$ is $F$-aperiodic for the function $F(e^{-(l+1)}) = \varphi(l)$ and $G$-aperiodic for the function $G( e^{-(k+1)}, l)= \varphi(l+k)$.
\end{lemma}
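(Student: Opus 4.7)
The plan is to do a direct unpacking of definitions, using the observation displayed just above the lemma. The metric on $\Sigma$ only takes values in $\{0\}\cup\{e^{-i}:i\in\N\}$, so for any $\varepsilon>0$ there is a unique $l\in\N_0$ with $e^{-(l+1)}<\varepsilon\leq e^{-l}$, and the strict inequality $d(w,w')<\varepsilon$ between two sequences is equivalent to the non-strict inequality $d(w,w')\leq e^{-(l+1)}$. This lets me pass freely between the ``$\leq e^{-(l+1)}$'' form appearing in \eqref{AperiodicWord} and the ``$<\varepsilon$'' form required by Definitions \ref{DefAperiodic} and \ref{Def2}.

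For the $F$-aperiodic claim, I would extend $F$ to all of $(0,\infty)$ by declaring $F$ to be constant on each half-open interval $(e^{-(l+1)},e^{-l}]$ with value $\varphi(l)$ when $l\geq l_0$, and setting $F\equiv 1$ on $(e^{-l_0},\infty)$; this gives a non-increasing function with $F(e^{-(l+1)})=\varphi(l)$ as required. Given any $n\in\N_0$, $s\in\N$, $\varepsilon>0$, pick the unique $l$ with $e^{-(l+1)}<\varepsilon\leq e^{-l}$. If $l\geq l_0$, then $d(T^nw,T^{n+s}w)<\varepsilon$ forces $d(T^nw,T^{n+s}w)\leq e^{-(l+1)}$ by the discreteness remark, and \eqref{AperiodicWord} yields $s\geq\varphi(l)=F(\varepsilon)$; if $l<l_0$, then $F(\varepsilon)=1\leq s$ automatically.

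For the $G$-aperiodic claim, I would carry out the analogous extension of $G$ in both variables, and then invoke the equivalence $d_l(w,w')\leq e^{-(k+1)} \iff d(w,w')\leq e^{-(l+k+1)}$ stated in the excerpt. Concretely, fix $l\in\N_0$, $\varepsilon>0$, pick $k$ with $e^{-(k+1)}<\varepsilon\leq e^{-k}$, and apply the discreteness remark to the metric $d_l$ (which takes values in the same discrete set) to convert $d_l(T^nw,T^{n+s}w)<\varepsilon$ into $d_l(T^nw,T^{n+s}w)\leq e^{-(k+1)}$. Using the equivalence this becomes $d(T^nw,T^{n+s}w)\leq e^{-(l+k+1)}$, and provided $l+k\geq l_0$ the hypothesis \eqref{AperiodicWord} applied with parameter $l+k$ yields $s\geq\varphi(l+k)=G(\varepsilon,l)$; the case $l+k<l_0$ is again trivial after extending $G\equiv 1$ there.

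There is essentially no hard step: all of the content is in the translation between metrics, and the only piece requiring any care is the bookkeeping that turns the strict inequality in the $F$- and $G$-aperiodicity definitions into the non-strict inequality $\leq e^{-(l+1)}$ used by Theorem \ref{ExistenceWords}, together with the harmless choice of extension of $\varphi$ (hence of $F$ and $G$) below the threshold $l_0$.
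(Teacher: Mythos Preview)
Your proposal is correct and is exactly the approach the paper intends: the paper offers no proof at all beyond the sentence ``Hence, the condition \eqref{AperiodicWord} can readily be translated into the following,'' so the content is precisely the definition-chasing you carry out, relying on the displayed equivalence $d(w,w')\leq e^{-(l+k+1)}\iff d_l(w,w')\leq e^{-(k+1)}$. Your careful handling of the discreteness of the metric (to pass between strict and non-strict inequalities) and of the extension below the threshold $l_0$ is more precise than the paper itself.
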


Conversely, every $F$-aperiodic, respectively $G$-aperiodic sequence is $\varphi$-aperiodic for a suitable function $\varphi$.

Note that there is a natural measure $\mu$ on $\Sigma$ which is $\log(n)$-Ahlfors regular.
Using the above Lemma and Theorem \ref{ExistenceWords} (for the lower bounds) as well as Propositions \ref{ThmBoxDim}  and \ref{ThmTopEnt} (for the upper bounds), we obtain the following.

\begin{corollary}
$ \cF = $ dim$_H(\Sigma) = $ dim$_B(\Sigma) =\log(n) $ and $\cG = h(T) = \log(n) $.
\end{corollary}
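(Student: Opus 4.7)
The plan is to combine the standard dimension and entropy computations for the full shift with the bounds already established in the paper, plus the existence theorem for aperiodic words. First I would verify the classical invariants. Since the length-$l$ cylinder at $w$ coincides with the ball $B(w,e^{-l})$, the Bernoulli measure with equal weights $1/n$ is $\log(n)$-Ahlfors regular and a maximal $e^{-l}$-separated set has $n^l$ elements; this gives $\dim_H(\Sigma)=\dim_B(\Sigma)=\log(n)$. Similarly, since $B_{d_l}(w,e^{-1})$ is the length-$(l+1)$ cylinder, a maximal $e^{-1}$-separated set in $d_l$ has exactly $n^{l+1}$ elements and $h(T)=\log(n)$.

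With these classical invariants in hand, Propositions \ref{ThmBoxDim} and \ref{ThmTopEnt} immediately give the upper bounds $\cF\le\dim_B(\Sigma)=\log(n)$ and $\cG\le h(T)=\log(n)$. For the matching lower bounds, I fix any $\sigma<\log(n)$ and set $\varphi_\sigma(l)=\lfloor e^{\sigma l}\rfloor$, so that $\limsup \frac{1}{l}\log\varphi_\sigma(l)=\sigma<\log(n)$. Theorem \ref{ExistenceWords} then produces a $\varphi_\sigma$-aperiodic sequence $w\in\Sigma$, and the conversion lemma says that $w$ is $F$-aperiodic with $F(e^{-(l+1)})=\varphi_\sigma(l)$ and $G$-aperiodic with $G(e^{-(k+1)},l)=\varphi_\sigma(l+k)$. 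Substituting $\e=e^{-(l+1)}$ into the definition of $\cF_w$ and $\e=e^{-(k+1)}$ (for any fixed $k$) into the definition of $\cG_w$ gives
\begin{equation*}
\cF_w\ \ge\ \limsup_{l\to\infty}\frac{\log\varphi_\sigma(l)}{l+1}\ =\ \sigma,\qquad \cG_w\ \ge\ \limsup_{l\to\infty}\frac{\log\varphi_\sigma(l+k)}{l}\ =\ \sigma.
\end{equation*}
Letting $\sigma\nearrow\log(n)$ closes all the required equalities.

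The corollary is essentially book-keeping, and I expect no real obstacle: the substantive input was delivered earlier by Theorem \ref{ExistenceWords} and by the general upper bounds of Propositions \ref{ThmBoxDim} and \ref{ThmTopEnt}. The only small care needed is that the functions $F$ and $G$ supplied by the conversion lemma are specified only at the geometric sequence $\e=e^{-(l+1)}$, but since $F_w^l$ is monotone in $\e$ and the complexities involve exponential rates, filling in the gaps by a non-increasing interpolation is harmless.
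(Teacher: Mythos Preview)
Your proposal is correct and follows essentially the same route as the paper: upper bounds from Propositions \ref{ThmBoxDim} and \ref{ThmTopEnt}, lower bounds from Theorem \ref{ExistenceWords} via the conversion lemma, together with the standard identification of $\dim_H(\Sigma)=\dim_B(\Sigma)=h(T)=\log(n)$ (the paper simply cites the existence of the $\log(n)$-Ahlfors regular Bernoulli measure for this). Your added detail of choosing $\varphi_\sigma(l)=\lfloor e^{\sigma l}\rfloor$ and letting $\sigma\nearrow\log(n)$ makes explicit what the paper leaves implicit; just note that Theorem \ref{ExistenceWords} only guarantees the aperiodicity condition for $l\ge l_0$, which is harmless for the limsups but worth a word.
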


To classify $\varphi$-aperiodic sequences in terms of periodic sequences, which lie densely in $\Sigma$, note that $(\Sigma, d, T)$ satisfies the strong $\d$-closing property, see \eqref{StrongClosingProperty}, for the function $\delta_1(l)=l$:

\begin{lemma}
Whenever $d(w, T^sw) \leq e^{-(l+1)}$, then there exists a periodic word $w_s$ of period $s$ such that $d(w, w_s) \leq e^{-(l+s+1)}$; hence $\frak{p}_{w_s, 1}(w) \geq s+l+1$.
\end{lemma}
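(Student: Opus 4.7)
The plan is to exhibit $w_s$ explicitly as the periodic extension of the length-$s$ prefix of $w$, and then to read off both the distance bound and the penetration-length bound directly from the definition of the metric $d$.

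First, I would unpack the hypothesis symbolically. By the definition of $d$, the inequality $d(w, T^s w) \le e^{-(l+1)}$ is equivalent to the termwise agreement $w(i) = w(i+s)$ for every $1 \le i \le l$; these are precisely the coordinates on which $w$ and $T^s w$ are forced to coincide. With this in hand, I would define the candidate periodic point $w_s \in \Sigma$ by
\[
w_s(i) := w\!\big(((i-1) \bmod s)+1\big), \qquad i \ge 1,
\]
so that $T^s w_s = w_s$ and $w_s(i) = w(i)$ for $1 \le i \le s$.

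Next I would verify, by a short case split, that $w(i) = w_s(i)$ for all $1 \le i \le s+l$. The range $1 \le i \le s$ is immediate from the construction of $w_s$. For $s < i \le s+l$, write $i = s + j$ with $1 \le j \le l$: periodicity of $w_s$ gives $w_s(i) = w_s(j) = w(j)$, while the unpacked hypothesis gives $w(i) = w(s+j) = w(j)$. Hence $w$ and $w_s$ share the entire block of their first $s+l$ symbols, and by the definition of $d$ this forces $d(w, w_s) \le e^{-(s+l+1)}$, which is the main claim.

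For the penetration length, the same block of agreement yields $(T^i w)(j) = (T^i w_s)(j)$ whenever $i+j \le s+l$, so in particular $T^i w$ and $T^i w_s$ agree at position $j = 1$ for every $0 \le i \le s+l$. Since any two sequences in $\Sigma$ have distance at most $e^{-1} < 1$, this gives $d(T^i w, T^i w_s) < 1$ throughout $0 \le i \le s+l$, so $w \in B_{d_{s+l}}(w_s, 1)$ and therefore $\frak{p}_{w_s, 1}(w) \ge s+l+1$. There is no substantive obstacle here; the only delicate point is the off-by-one bookkeeping between the length of agreement and the exponent appearing in $d$.
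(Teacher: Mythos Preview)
Your proof is correct and follows exactly the same construction as the paper: take $w_s$ to be the periodic extension of the length-$s$ prefix of $w$ and check agreement on the first $s+l$ symbols. One small remark: in your case $s<i\le s+l$ you write $i=s+j$ and invoke $w_s(j)=w(j)$, but when $l>s$ some of these $j$ exceed $s$, so that equality is part of what is being shown; phrasing the verification as an induction on $i$ (using $j<i$) closes this harmlessly. The paper's two-line proof is equally terse on this point.
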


\begin{proof}
In fact, let $w_s \in \Sigma$ be the periodic word of period $s$ such that $w_s(i)=w(i)$ for $i=1, \dots, s$.
Since $d(w, T^sw) \leq e^{-(l+1)}$, we also have $w_s(i)=w(i)$ for $i=s+1, \dots, s+l$.
The proof follows.
\end{proof}

Let $\varphi: (0, \infty)\to (0, \infty)$ be an increasing bijective function in the following.
We may reformulate the critical neighborhood of a periodic point given in \eqref{CriticalNbhd} as well as condition \eqref{Bad} 
and the strong $\d$-closing property to the setting of $\varphi$-aperiodic sequences.

\begin{proposition}
[\cite{SchroederWeil}, Proposition 3.4]
 \label{PeriodicDistance}
If $w\in \Sigma$ is $\varphi$-aperiodic (with say $l_0=0$), 
then for every periodic sequence $ w_s \in \Sigma$ of period $s$ and  for all times $n\in \N_0$ we have 
\be
\label{BernoulliCriticalNbhd}
	d(T^n w, w_s)\geq e^{- (s+\varphi^{-1}(s)  + 1)}.
\ee
Conversely, if $w$ satisfies \eqref{BernoulliCriticalNbhd}, then $w$ is $\varphi$-aperiodic.
\end{proposition}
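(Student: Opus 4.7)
The plan is to apply the general classification philosophy from Theorem \ref{ThmBounded}, specialized to the Bernoulli setting via the strong $\delta$-closing property established just before the proposition (with $\delta_1(l)=l$). The key observation is that in the metric $d$ on $\Sigma$, the inequality $d(u,v)\leq e^{-(k+1)}$ is equivalent to $u$ and $v$ agreeing on the initial segment of length $k$. Thus I would work throughout by translating distance bounds into equalities of coordinates and back.

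For the necessity direction, I would argue by contraposition: suppose some periodic $w_s$ of period $s$ and some $n\in\N_0$ satisfy $d(T^nw,w_s)<e^{-(s+\varphi^{-1}(s)+1)}$. Then $T^nw$ and $w_s$ agree on positions $1,\dots,s+\varphi^{-1}(s)+1$. Using $w_s(i+s)=w_s(i)$, this forces $(T^{n+s}w)(i)=(T^nw)(i+s)=w_s(i+s)=w_s(i)=(T^nw)(i)$ for $1\leq i\leq \varphi^{-1}(s)+1$, hence $d(T^nw,T^{n+s}w)\leq e^{-(\varphi^{-1}(s)+2)}$. Taking $l=\varphi^{-1}(s)+1$, the $\varphi$-aperiodicity of $w$ would require $s\geq \varphi(l)=\varphi(\varphi^{-1}(s)+1)>s$ by strict monotonicity of $\varphi$, a contradiction.

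For the converse, I would apply the preceding closing lemma directly. Given $d(T^nw,T^{n+s}w)\leq e^{-(l+1)}$, the lemma produces a periodic word $w_s$ of period $s$ with $d(T^nw,w_s)\leq e^{-(l+s+1)}$. The standing hypothesis $d(T^nw,w_s)\geq e^{-(s+\varphi^{-1}(s)+1)}$ then forces $l+s+1\leq s+\varphi^{-1}(s)+1$, so $l\leq \varphi^{-1}(s)$, whence $\varphi(l)\leq s$, which is the required aperiodicity estimate.

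The only real obstacle is bookkeeping: one must keep track of the off-by-one shift between agreement length and distance ($e^{-(k+1)}$ versus length $k$), and invoke strict monotonicity of $\varphi$ at the key step to obtain the strict inequality $\varphi(\varphi^{-1}(s)+1)>s$ that gives the contradiction. The substantive ingredient --- recurrence being caused by periodic orbits --- is already packaged in the closing lemma, so no further dynamical input is required.
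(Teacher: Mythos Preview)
Your proof is correct and essentially identical to the paper's. The only difference is organizational: for the converse direction you invoke the preceding closing lemma as a black box, whereas the paper reproves it inline by explicitly constructing the periodic word $w_s$ and verifying agreement on the first $s+l$ coordinates; the necessity direction is the same contrapositive argument in both, with the paper phrasing it as a direct bound on the exponent $l\leq s+\varphi^{-1}(s)$ rather than a contradiction.
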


\begin{proof} 
If $w$ is $\varphi$-aperiodic,  assume there exists $m\in \N$ such that $d(T^nw, w_s) =e^{-(l+1)}$ 
where we assume $l> s$ (otherwise the first statement follows).
Hence, 
\be
\nonumber
	w(n+1)\ldots w(n+l) = w_s(1)\ldots  w_s(l)
\ee
and, since $w_s$ is of period $s<l$,
we see that 
\be
\nonumber
	w(n+1)\ldots w(n+1+(l-s))=w(n+1+s)\ldots w(n +1 +l).
\ee
Thus, $d(T^nw, T^{n+s}w)\leq e^{-(l-s+1)}$ which implies $s\geq \varphi(l-s )$ and $l\leq s+\varphi^{-1}(s)$.

Conversely, assume that $d(T^nw, T^{n+s}w)\leq e^{-(l+1)}$ for some $s\in \N$, $l\in \N$.
Moreover, let $w_s$ be the periodic sequence of period $s$ such that 
\be
\label{periodicSeq}
	w_s(1)\ldots w_s(s) = w(n+1)\ldots w(n+s).
\ee
From $d(T^nw, T^{n+s}w)\leq e^{-(l+1)}$ we obtain 
\bea
\nonumber
	w(n+1 + s)\ldots w(n+1+s + l) &=& w(n+1)\ldots w(n+1+l)
	\\ \nonumber
	&=& w_s(1)\ldots w_s( l)  = w_s(s+1)\ldots w_s(s+1+l),
\eea
where we used \eqref{periodicSeq} if $l\leq s$, and, if $l>s$, say $l= ks + r$, ($k, r \in \N$, $r<s$), that  
\bea
\nonumber
	w(n+1 + is)\ldots w(n+1+(i+1) s ) &=& w(n+1 + (i-1) s)\ldots w(n+1+i s ) = \ldots 
	\\ \nonumber
	&=& w(n+1)\ldots w(n+1+ s ) = w_s(n+1) \ldots w_s(n+s)
	\\ \nonumber
	&=& w_s(n+1 + is)\ldots w_s(n+1+(i+1) s ) 
\eea
for $1 \leq i < k$ and analogously for $i=k$.
This yields
\be
\nonumber
	e^{-(s+l  +1)} \geq d(T^{n}w, w_s)\geq e^{-(s+\varphi^{-1}(s) + 1)},
\ee
using the assumption.
Hence, $s+\varphi^{-1}(s) \geq s+ l$ which shows  $s\geq \varphi(l)$.
\end{proof}

As in \eqref{Bad},
fix a periodic sequence $w_s \in \Sigma$ of  period $s\in \N$ and consider the set 
\bea
	\nonumber
	\textbf{Bounded}_{w_s} &=& 
	\{ w\in \Sigma : \exists \ c=c(w)< \infty \text{ such that } T^n w \not \in B(w_s,  e^{-(s+c +1)}) \text{ for all } n \in \N_0\}
	\\
	\nonumber
	&=& 
	\{ w\in \Sigma : \exists \ l=l(w)< \infty \text{ such that } \frak{p}_{w_s, 1}(T^n w) \leq s+ l  +1 \text{ for all } n \in \N_0 \}.
\eea

From \cite{Weil2}, Theorem 3.8,  we know the following result.
 
\begin{theorem}
The intersection $\bigcap \textbf{Bounded}_{w_s}$ over all periodic $w_s \in \cal{P}_T$ (as well as each particular set $\textbf{Bounded}_{w_s}$)  is a Schmidt-winning set.
\end{theorem}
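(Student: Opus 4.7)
The plan is to establish the stronger statement that each individual set $\textbf{Bounded}_{w_s}$ is $\alpha$-winning for some $\alpha \in (0,1)$ depending only on the alphabet size $n$, and then to invoke Schmidt's classical countable-intersection property for $\alpha$-winning sets. Since every periodic sequence is determined by a finite primitive block, $\cal{P}_T$ is countable, so the intersection $\bigcap_{w_s \in \cal{P}_T} \textbf{Bounded}_{w_s}$ is also $\alpha$-winning.

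First I would translate Schmidt's $(\alpha,\beta)$-game into the combinatorics of prefixes. Since $d$ is an ultrametric, every closed ball $B(w, e^{-(k+1)})$ coincides with the cylinder $[w(1)\dots w(k)]$, so each move of the game amounts to extending a prefix. Bob's ball $B_i$ specifies a prefix of length $k_i$; Alice's ball $A_i \subset B_i$ of radius $\alpha\cdot\mathrm{rad}(B_i)$ specifies a prefix of length $k_i + m$, where $m := \lceil \log(1/\alpha)\rceil$; Bob's next ball specifies a prefix of length $k_i + \lceil\log(1/(\alpha\beta))\rceil$. The winning point is the unique infinite sequence produced by these nested extensions. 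The key parameter is $m$, the number of digits Alice commits to per round, which can be made as large as we wish by shrinking $\alpha$.

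Next I would reformulate the target set via Proposition \ref{PeriodicDistance}: $w \in \textbf{Bounded}_{w_s}$ iff there exists $L = L(w) < \infty$ such that the finite word $w_s(1)\dots w_s(L)$ never appears as a factor of $w$. Alice's task therefore reduces to a combinatorial avoidance problem. The heart of the argument is a scheduling lemma asserting that for any fixed $L$, and provided $m \geq L$, Alice can maintain the invariant that her committed prefix contains no factor of length $L$ equal to $w_s(1)\dots w_s(L)$. At the start of her $i$-th move, with committed prefix $u_1\dots u_k$, call $j \in \{k-L+2,\dots,k\}$ a \emph{live match} if $u_j\dots u_k = w_s(1)\dots w_s(k-j+1)$; there are at most $L-1$ such. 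Alice appends new digits $u_{k+s}$ (for $s = 1,\dots,m$), choosing each in order so as to break the live match with the smallest $j$ (hence the narrowest remaining window), by selecting any symbol other than $w_s(k+s-j+1)$; since $n \geq 2$, an admissible choice always exists. A greedy ``break the most urgent match first'' schedule resolves all live matches within $m$ steps and introduces no new completed match of length $L$. Matches that begin among Bob's subsequent digits enter the queue at the following round, and by induction the invariant propagates throughout the play.

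The intersection point $w$ therefore contains no occurrence of $w_s(1)\dots w_s(L)$, so $w \in \textbf{Bounded}_{w_s}$, and $\textbf{Bounded}_{w_s}$ is $\alpha$-winning for any $\alpha$ sufficiently small that $m \geq L$. These choices depend only on $n$ and are uniform in $w_s$, so Schmidt's countable-intersection theorem yields that $\bigcap_{w_s \in \cal{P}_T}\textbf{Bounded}_{w_s}$ is $\alpha$-winning. The main obstacle I expect is the scheduling bookkeeping: one must verify that Alice's greedy strategy resolves every live match before it reaches length $L$ \emph{and} that no new match of length $L$ is inadvertently completed within the round, despite the fact that Bob, between Alice's moves, can reactivate or create matches by appending his own digits. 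The cleanest way to formalise this is probably to track the number of live matches as a nonincreasing potential across Alice's moves and to absorb Bob's bursts into a sufficiently large choice of $L$ relative to $m$ and $n$.
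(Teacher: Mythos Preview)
The paper does not give a proof of this theorem; it simply records it as a citation of Theorem~3.8 in \cite{Weil2}. So there is no in-paper argument to compare against---your proposal is an attempt to supply what the paper outsources.

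Your overall plan (translate the game into prefix extensions, show each $\textbf{Bounded}_{w_s}$ is $\alpha$-winning, then apply Schmidt's countable-intersection property) is the right architecture. But the greedy scheduling step has a genuine gap, not merely a bookkeeping nuisance. Take $n=2$, $w_s=(aab)^\infty$, $L=3$, $W=aab$, $m=3$, and suppose Bob hands Alice a prefix ending in $aa$ (live matches of ages $1$ and $2$). Your rule ``break the oldest match'' forces Alice to avoid $W(3)=b$, so she writes $a$; this kills the age-$2$ match but promotes the age-$1$ match to age $2$ and creates a fresh age-$1$ match. Three iterations later Alice has written $aaa$ and the live-match profile is exactly $(2,1)$ again. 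Bob now writes a single $b$ and completes $W$. The same trap persists for larger $L$ along this periodic word. The defect is that your invariant only prevents completions \emph{during} Alice's block; you must also ensure that at the \emph{end} of her block no live match has age exceeding $L-1-b$, so Bob's $b$ symbols cannot finish it. The greedy rule does not deliver this.

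There is a second issue with uniformity. You write that $\alpha$ must be small enough that $m\ge L$, and then assert the choice is uniform in $w_s$. But $L$ is the length of the block Alice commits to avoid, and with the greedy strategy it is forced to grow with the period $s$ (and with $b$), so $m$---hence $\alpha$---would depend on $w_s$, which is exactly what the countable-intersection step forbids. A correct argument fixes a small $\alpha$ once and for all, lets Alice choose $L$ as a function of $\beta$ and $s$, and replaces the greedy rule by a strategy that controls the \emph{tail} of her block---for instance, by forcing her last few symbols to break any match whose age would otherwise enter the danger zone $[L-b,\,L-1]$. This is where the real combinatorics lies, and it is substantially more delicate than the greedy heuristic you sketch.
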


Due to properties of Schmidt-winning sets, the intersection $\bigcap \textbf{Bounded}_{w_s}$ is nonempty and of Hausdorff-dimension $\log(n)$.
However, this is not sufficient to imply the existence of $\varphi$-aperiodic sequences.
In fact, the above Proposition states that a sequence $w$ is $\varphi$-aperiodic, if and only if 
\be
\nonumber
	w \in \bigcap_{w_s \in \cal{P}_T} \textbf{Bounded}_{w_s}( \varphi),
\ee
where $ \textbf{Bounded}_{w_s}( \varphi)$ is as in \eqref{PenetrationCalF} with $\varphi=G_1$.
Moreover, from Theorem $3.8$ in  \cite{Weil3}, each set $\textbf{Bounded}_{w_s}( \varphi)$ is seen to have Hausdorff-dimension less than $\log(n)$ and such sequences turn out to be extremely rare.

\begin{remark}
Recall that by Proposition \ref{ThmTopEnt} (or by a simple argument), 
every function for which $\varphi$-aperiodic sequences exist
is eventually bounded by $\varphi(l) \leq n^{l+1}=e^{ \log(n)(l+1)}$. 
On the other hand, in view of Theorem \ref{ExistenceWords} and Condition \eqref{BernoulliCriticalNbhd},
consider the function $\varphi_{\d} (l)\equiv e^{\d\log(n)l}$ with $\d<1$ for which $\varphi_{\d}^{-1}(l) = \frac{1}{\d\log(n)}\log(l)$;
hence $s+ \varphi_{\d}^{-1}(s) = s + \frac{1}{\d\log(n)}\log(s)$.
\end{remark}


\subsection{Geodesic flow on hyperbolic manifolds}
\label{SectionGF}
As our central example, we discuss the geodesic flow on hyperbolic manifolds.
Let $M= \H^{n+1} /\Gamma$ be a closed hyperbolic manifold in the following and let $\phi^t$ denote the geodesic flow on the unit tangent bundle $SM$ of $M$;
here, $\H^{n+1}$ denotes the $(n+1)$-dimensional real-hyperbolic space and $\Gamma$ is a torsion-free cocompact lattice in the isometry group of $\H^{n+1}$. 
First we discuss the existence of $G$-aperiodic geodesics and their classification in terms of penetration lengths in neighborhoods of closed geodesics, see Subsection \ref{GeodResults}.
Then we relate the results to a suitable model of Diophantine approximation in negatively curved spaces in Subsection \ref{GeodDA}.
After that we prove the main result of this section in Subsection \ref{GeodProof} and, finally, prove a `metric version' of the closing lemma, see Proposition \ref{ClosingLemma}, in the context of CAT(-1)-spaces in Subsection \ref{GeodClosingLemma}.
\\

\subsubsection{$\varphi$-aperiodic geodesics and main results}
\label{GeodResults}
In the following we identify an orbit $\phi^t(v)$ with the geodesic $\gamma_v \equiv \pi \circ \phi^t(v)$. 
With a slightly different notion using the metrics $d_l$ defined in \eqref{NewBowenMetric}, 
the existence of $\varphi$-aperiodic geodesics follows  from \cite{SchroederWeil}.

\begin{theorem}[\cite{SchroederWeil}, Theorem 4.3]
\label{ExistenceGeodesic}
Assume that $i_M > \log(2) $ and let $\e_0>0$ such that $\log(2) + \e_0 <i_M$. 
Let $\varphi : (1, \infty) \to (\e_0, \infty)$ be a non-decreasing function such that
\be
\nonumber
	\limsup_{l \to \infty} \frac{1}{l} \log(\varphi(l)) < n.
\ee
Then there exists  a length $l_0\geq 0$ and a vector $v \in SM$ 
which satisfies for all times $t_0 \in \R^+$, all lengths $l \geq l_0 $ and shifts $s> \e_0$,  whenever
\be 
\label{MTC}
	d_l( \phi^{t_0}v ,\phi^{t_0 +s}v)  \leq \varepsilon_0 \ \ \ \implies s \geq \varphi(l);
\ee
that is, the ray $\gamma_v$ is a $\varphi$-aperiodic ray with respect to $\e_0$ and $l_0$ (see  \eqref{WeakBarF}).
\end{theorem}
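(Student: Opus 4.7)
The plan is to reduce the existence of $\varphi$-aperiodic geodesics to the symbolic analogue in Theorem \ref{ExistenceWords} by coding the geodesic flow on $SM$ by an alphabet of $\varepsilon_0$-separated unit-length segments, and then to use the CAT($-1$) closing/shadowing properties of $M$ (made quantitative by Proposition \ref{ClosingLemma}) to transfer the symbolic construction into a genuine geodesic ray.

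First I would fix a unit time step and build an alphabet $\mathcal{A}\subset SM$ by choosing a maximal $\varepsilon_0$-separated set in the metric $d_1$ whose elements are based in a compact fundamental domain $K$ of $\tilde M$. Running the counting from the proof of Theorem \ref{ThmVolEnt} in reverse yields $|\mathcal{A}|\geq c\, e^{n}$ for some $c=c(M,\varepsilon_0)>0$, and the hypothesis $\log(2)+\varepsilon_0<i_M$ guarantees that $d_1$-balls of radius $\varepsilon_0$ lift injectively from $SM$ to $S\tilde M$, so the coding is faithful on both scales.

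Next I would apply the symbolic construction of Theorem \ref{ExistenceWords} to the alphabet $\mathcal{A}$: since $\limsup \tfrac{1}{l}\log\varphi(l)<n\leq \log|\mathcal{A}|$, one obtains an aperiodic sequence $(a_k)\in \mathcal{A}^{\N}$ in the sense of \eqref{AperiodicWord}. From this sequence I would build a geodesic ray $\gamma_v$ in $M$ iteratively: start from $a_0$ and, at each stage, concatenate the next symbol by invoking the CAT($-1$) shadowing/closing argument, producing $\gamma_v$ that $\varepsilon_0$-shadows the piecewise symbolic orbit in the metric $d_l$ for every $l\geq l_0$.

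Finally, I would verify \eqref{MTC} by contradiction: any recurrence $d_l(\phi^{t_0}v,\phi^{t_0+s}v)\leq\varepsilon_0$ with $s<\varphi(l)$ in $\gamma_v$ would, via shadowing, force a coincidence between blocks of $(a_k)$ that is ruled out by its symbolic aperiodicity. The main obstacle is this last shadowing step: the geodesic produced is only an $\varepsilon_0$-approximation of the symbolic concatenation, so the aperiodicity of $(a_k)$ at scale $\varepsilon_0$ must survive the shadowing error. This is precisely where the condition $i_M>\log(2)$ enters---combined with $\varepsilon_0< i_M-\log(2)$, it ensures that the shadowing error in $SM$ does not collapse distinct symbols in $\mathcal{A}$, so that the convexity of the distance function on $\tilde M$ allows the aperiodicity of $(a_k)$ to transfer cleanly to the condition \eqref{MTC} for $\gamma_v$.
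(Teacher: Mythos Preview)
The paper does not actually give a proof of this statement: Theorem~\ref{ExistenceGeodesic} is quoted verbatim from \cite{SchroederWeil} (Theorem~4.3 there) and used as a black box, so there is no ``paper's own proof'' to compare against. That said, your outline is close in spirit to the construction in \cite{SchroederWeil}, which also builds the ray step by step and uses a counting argument based on the volume growth $e^{nl}$ to guarantee enough room at each stage. The essential difference is that the original argument works \emph{directly} with geodesic segments in $\tilde M$ rather than passing through an abstract full shift.

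There is a genuine gap in your reduction to Theorem~\ref{ExistenceWords}. That theorem produces an aperiodic sequence in the \emph{full} shift $\mathcal{A}^{\N}$, but an arbitrary sequence $(a_k)$ of $\varepsilon_0$-separated unit vectors in $SM$ need not be shadowed by any geodesic: for the CAT($-1$) shadowing you invoke to apply, the terminal point $\pi\circ\phi^1(a_k)$ must lie close to the basepoint $\pi(a_{k+1})$, which is a nontrivial compatibility condition you have not imposed. In other words, the geodesic flow is coded by a \emph{subshift} of finite type (via a Markov partition), not by a full shift on $\mathcal{A}$, and Theorem~\ref{ExistenceWords} as stated does not apply to subshifts. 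The step ``concatenate the next symbol by invoking the CAT($-1$) shadowing/closing argument'' therefore fails without further work. The fix in \cite{SchroederWeil} is to avoid this detour entirely: one grows the geodesic in $\tilde M$ segment by segment, and at each extension step counts that the number of continuations violating \eqref{MTC} for some earlier time is $o(e^{nl})$ while the total number of $\varepsilon_0$-separated continuations is $\asymp e^{nl}$; the hypothesis $\limsup \tfrac{1}{l}\log\varphi(l)<n$ is exactly what makes this counting close. The condition $i_M>\log(2)+\varepsilon_0$ enters in that direct argument to control how many continuations can be ``bad'' for a fixed earlier time, not merely to keep the coding injective.
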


\noindent As remarked in \cite{SchroederWeil} the authors believe that the assumption $i_M> \log(2)$ is not necessary. Moreover, the result holds true in variable negative curvature.

Since a ray is $\varphi$-aperiodic if and only if it is $G$-aperiodic by Lemma \ref{Umrechnung}, we may focus on $\varphi$-aperiodic rays in the following.
Thus, using Theorem \ref{ThmVolEnt}, Theorem \ref{ExistenceGeodesic} and that $h(\phi^t)=\lambda$ by Manning \cite{Manning}, this shows the following.

\begin{corollary}
$\cG = \lambda = h(\phi^t) = n$.
\end{corollary}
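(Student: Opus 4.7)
The plan is to prove the chain of equalities by three separate arguments and combine them.

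First, I recall that for the closed hyperbolic manifold $M=\H^{n+1}/\Gamma$ the volume entropy is $\lambda=n$, since in $\H^{n+1}$ the volume of a metric ball grows like $c\sinh^n(r)\sim c' e^{nr}$, so $\lim_{r\to\infty}\frac{1}{r}\log V(o,r)=n$. Manning's theorem, recalled in Section \ref{SectionVolumeEntropy}, then yields $h(\phi^t)=\lambda$ because $M$ has (constant) nonpositive curvature. This settles the two right-hand equalities $\lambda=h(\phi^t)=n$.

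For the upper bound $\cG\leq \lambda$, I would invoke Theorem \ref{ThmVolEnt} directly. Given any $v\in SM$, Lemma \ref{Umrechnung} converts the orbit $\phi^t(v)$ into a $\varphi$-aperiodic ray with $\varphi(l)=G(l,\e)=F_v^l(\e)$ for $\e\leq \e_0$ (taking the time-one map). Theorem \ref{ThmVolEnt} then gives $\limsup_{l\to\infty}\frac{1}{l}\log F_v^l(\e)\leq \lambda$ for every such $\e$, and taking the limit $\e\to 0$ gives $\cG_v\leq\lambda$. Taking the supremum over $v$ yields $\cG\leq \lambda=n$.

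For the lower bound $\cG\geq n$, the plan is to apply Theorem \ref{ExistenceGeodesic} for a sequence of exponential rates approaching $n$. For each $\delta\in(0,n)$ choose the non-decreasing function $\varphi_\delta(l)\equiv e^{(n-\delta)l}$, which satisfies $\limsup_{l\to\infty}\frac{1}{l}\log\varphi_\delta(l)=n-\delta<n$. Theorem \ref{ExistenceGeodesic} then produces a vector $v_\delta\in SM$ and $l_0$ such that $\gamma_{v_\delta}$ is a $\varphi_\delta$-aperiodic ray with respect to $\e_0$ and $l_0$. By Lemma \ref{Umrechnung}, the orbit $\phi^t(v_\delta)$ is $G$-aperiodic for $G(l,\e)=\varphi_\delta(l)$ whenever $\e\leq\e_0$, so
\be
\nonumber
\cG_{v_\delta}=\lim_{\e\to 0}\limsup_{l\to\infty}\frac{\log F_{v_\delta}^l(\e)}{l}\geq \limsup_{l\to\infty}\frac{\log \varphi_\delta(l)}{l}=n-\delta.
\ee
Hence $\cG\geq \cG_{v_\delta}\geq n-\delta$ for every $\delta>0$, and letting $\delta\to 0$ yields $\cG\geq n$. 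Combining with the upper bound gives $\cG=n=\lambda=h(\phi^t)$.

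No step is really hard here: the upper bound is a direct citation, and the lower bound is a one-line application of the existence theorem to the natural family $\varphi_\delta$. The only mild subtlety is verifying that the parameter $\e_0$ in Theorem \ref{ExistenceGeodesic} does not interfere with the $\e\to 0$ limit defining $\cG_v$; this is immediate because the definition of $\cG_v$ already takes $\limsup_{l\to\infty}$ first and then $\e\to 0$, so it suffices that the inequality $F_{v_\delta}^l(\e)\geq \varphi_\delta(l)$ holds for all sufficiently small $\e$, which is exactly what the $\e_0$ threshold provides.
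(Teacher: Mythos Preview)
Your proof is correct and follows essentially the same approach as the paper: the paper's argument is a one-sentence citation of Theorem \ref{ThmVolEnt}, Theorem \ref{ExistenceGeodesic}, and Manning's equality $h(\phi^t)=\lambda$, and you have simply unpacked these citations, making explicit the family $\varphi_\delta(l)=e^{(n-\delta)l}$ needed to saturate the lower bound. The only point worth noting is that Theorem \ref{ExistenceGeodesic} carries the standing hypothesis $i_M>\log(2)$, which you do not mention but which is implicit in the paper's context as well.
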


\begin{remark} More precisely, in the case of non-positive curvature we have $\cG \leq \lambda = h(\phi^t)$ by  Theorem \ref{ThmVolEnt}, 
with equality in the cases of constant negative curvature by Theorem \ref{ExistenceGeodesic} and zero curvature (since $\lambda=h(\phi^t)=0$ in this case).
This shows Theorem \ref{ThmIntro}.
\end{remark}

Fix $0<\e_0<i_M/2$ as above.
For a closed geodesic $\alpha : \R\to M$, let $v_{\alpha} \equiv \dot \alpha(0)$ be the periodic vector of period $| \alpha |$, the length of $\alpha$.
Given a ray $\gamma=\gamma_v$ in $M$ we adjust the definition of the \emph{penetration length}  $\mathfrak{p}_{\alpha}(v, t_0)$ of $\gamma$ at time $t_0$ in the neighborhood of the closed geodesic $\a$;
that is, set $\mathfrak{p}_{\alpha}(v, t_0) = 0$ if $d(\gamma(t_0), \a) > \e_0/2$, and otherwise 
\be
\nonumber
	\mathfrak{p}_{\alpha} (v, t_0) \equiv \sup\{L\geq 0 : d_L (\phi^{t_0} v, \phi^t v_{\alpha})\leq \e_0/2  \text{ for some time } t \in [0, |\a|] \} \in [0, \infty].
\ee

\begin{remark}
Note that by compactness and local convexity of $\a$ and $d$, we actually have that, 
in other words, if $\mathfrak{p}_{\alpha} (v, t_0) = L \in (0, \infty)$ then there is a time $t \in [0, |\a|]$ such that
\be
\label{DistanceClose}
	d( \gamma(t_0 + s), \alpha( t + s)) \leq \e_0/2 \ \ \text{ for all } s\in [0,L].
\ee
Conversely, if we have \eqref{DistanceClose}, then $\mathfrak{p}_{\alpha} (v, t_0) \geq L$.
Moreover, note that while the $\e_0$-neighborhood of $\a$ might cover $M$, we have $\mathfrak{p}_{\alpha} (v, t_0)=\infty$ if and only if $\gamma_v$ is positively asymptotic to $\a$.
\end{remark}

We will next classify $\varphi$-aperiodic geodesics in terms of their penetration lengths in the neighborhoods of closed geodesics.
More precisely, fix a point $o\in M$ and for a closed geodesic $\a$ in $M$ define the set
\bea
\nonumber
	\textbf{Bounded}_{\a} &\equiv& \{v \in SM_o: \exists L< \infty \text{ such that } \mathfrak{p}_{\alpha} (v, t_0) \leq L \text{ for all } t_0 \geq 0 \},
\eea
as well as, for $L<\infty$, the subset
\bea
\nonumber
	\textbf{Bounded}_{\a}(L) &\equiv& \{v \in SM_o : \mathfrak{p}_{\alpha} (v, t_0) \leq L \text{ for all } t_0 \geq 0 \}.
\eea
The next result follows from \cite{Weil2}, Section $3.6$, where the penetration length is defined slightly differently.
\begin{theorem}
The intersection $\bigcap_\a \textbf{Bounded}_\a$ over all closed geodesics $\a$ in $M$ (as well as each particular set $\textbf{Bounded}_\a$)  is a Schmidt-winning set.
\end{theorem}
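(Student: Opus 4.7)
The plan is to prove that each individual set $\textbf{Bounded}_\a$ is $\alpha_0$-winning for a uniform $\alpha_0 \in (0,1)$ independent of the closed geodesic $\a$, and then invoke the countable intersection property of Schmidt winning sets. Since closed geodesics in $M$ are countable (enumerable, say, by length $|\a_k| \leq k$), and the intersection of countably many $\alpha_0$-winning sets for the same parameter $\alpha_0$ is again $\alpha_0$-winning, the combined statement follows. The game is played on $SM_o \cong S^n$ with the spherical metric; at round $i$ Bob plays a ball $B_i$ of radius $r_i$ and Alice picks $A_i \subset B_i$ of radius $\alpha_0 r_i$, so the radii contract by the factor $\rho = \alpha_0 \beta < 1$ per round.

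The key geometric input is a shadow lemma in $\H^{n+1}$. Fix a lift $\tilde o$ of $o$ and lift $\a$ to the $\Gamma$-orbit of geodesic lines $\{\tilde \a_\psi\}_{\psi \in \Gamma / \mathrm{Stab}(\tilde \a)}$. By the thin-triangle convexity estimates of CAT$(-1)$-geometry, the set of $v \in S_{\tilde o}\H^{n+1}$ whose geodesic remains in the $\e_0/2$-tube of $\tilde \a_\psi$ for parameter length $L$ is contained in a spherical cap of radius at most $C \, e^{-L}\, e^{-d(\tilde o, \tilde \a_\psi)}$ around the direction from $\tilde o$ to the nearest point of $\tilde \a_\psi$. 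Combining this with the volume growth bound $\lvert \{\psi : d(\tilde o, \tilde \a_\psi) \leq t\} \rvert = O(e^{nt})$ shows that the union of bad shadows at scale $r$ producing penetration length greater than $L(r) \equiv -C' \log(r)$ is contained in at most $N$ spherical caps of radius $\rho r$, for constants $N$ and $C'$ independent of $r$. This is the adaptation of the covering lemma of \cite{Weil2}, Section 3.6, to the present definition of $\mathfrak{p}_\a$; letting $t_0$ range over $\R^+$ corresponds precisely to summing over the lifts $\tilde \a_\psi$.

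Alice's winning strategy is then standard: at round $i$, she chooses $A_i \subset B_i$ disjoint from the bad caps at scale $r_i$. This is possible provided $\alpha_0$ is small enough that a ball of radius $\alpha_0 r_i$ can be embedded in $B_i$ avoiding $N$ balls of radius $\rho r_i$, a purely combinatorial-metric condition on $S^n$ satisfied once $\alpha_0$ is chosen small (depending on $N$ and the doubling constant of the sphere). Any $v \in \bigcap_i B_i$ then has penetration length bounded by $L(r_{i(t_0)})$ at every time $t_0$, where $i(t_0)$ is the first scale at which the shadow corresponding to $t_0$ would be excluded; hence $v \in \textbf{Bounded}_\a$.

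The hard part is the uniformity of the constants $N$ and $C'$ over all closed geodesics $\a$, which is required both so that a single $\alpha_0$ works for every $\textbf{Bounded}_\a$ and so that the countable intersection remains $\alpha_0$-winning with the same parameter. This uniformity rests on the compactness of $M$ (so that $\e_0 < i_M/2$ is fixed) and on the observation that once $L$ exceeds a threshold depending only on $\e_0$ and the ambient hyperbolic geometry, the local estimates near any lift $\tilde \a_\psi$ depend only on ambient quantities, not on the particular geodesic $\a$ nor on its length $|\a|$; this is essentially where the metric closing lemma in CAT$(-1)$-spaces (Proposition \ref{ClosingLemma}) enters the argument.
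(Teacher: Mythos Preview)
The paper does not give its own proof of this theorem; it is quoted as a consequence of \cite{Weil2}, Section~3.6, with the remark that the penetration length there is defined slightly differently. Your plan is therefore a reconstruction of that external argument rather than something to compare against an in-paper proof.

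Your outline follows the standard Dani--Schmidt strategy and is broadly on target: a shadow estimate bounding the angular size of directions with long penetration in a given lifted tube, an orbit-counting bound on the number of relevant lifts at each scale, Alice's avoidance of the resulting bad caps, and the countable-intersection property of $\alpha_0$-winning sets (closed geodesics in a compact hyperbolic manifold being countable). This is indeed how results of this type are proved.

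One point is misattributed. You write that the uniformity of the constants $N$, $C'$ over all closed geodesics $\a$ ``is essentially where the metric closing lemma in CAT$(-1)$-spaces (Proposition~\ref{ClosingLemma}) enters the argument.'' It is not. Proposition~\ref{ClosingLemma} is used in this paper only in the proof of Theorem~\ref{ThmClassificationGeodesics}, to pass from a recurrence condition $d_l(\phi^{t_0}v,\phi^{t_0+s}v)\le\e_0$ to closeness to an axis $A_\psi$; in the Schmidt-game argument for $\textbf{Bounded}_\a$ the closed geodesic $\a$ is given from the outset and no such step is needed. The uniformity you want comes directly from the ambient geometry: the shadow from $\tilde o$ of an $\e_0/2$-tube around any geodesic line at distance $t$ has angular size controlled by $e^{-t}$ with constants depending only on $\e_0$ and the curvature, and the orbit count $\lvert\{\psi:d(\tilde o,\psi\tilde o)\le t\}\rvert=O(e^{nt})$ depends only on $\Gamma$. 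Neither estimate sees which closed geodesic $\a$ is under consideration, so the same $\alpha_0$ works for all of them without any closing-type input.
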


In particular, the set  $\bigcap_\a \textbf{Bounded}_\a$ is of Hausdorff-dimension $n=$ dim$(SM_o)$ by properties of Schmidt winning sets.
On the other hand, from Theorem $1.5$ in  \cite{Weil3}, each set $\textbf{Bounded}_{\a}(L)$ follows to have Hausdorff-dimension less than $n$ and such vectors turn out to be extremely rare.

Letting $\d_0 = \log(1+ \sqrt{2})$ we may classify $\varphi$-aperiodic rays by the following Theorem, the main result of this section.

\begin{theorem}
\label{ThmClassificationGeodesics}
Let $v \in SM_o$, $\varphi : (1, \infty) \to (\e_0, \infty)$ be an increasing invertible function and fix $\e_0>0$.
Then, if $\gamma_v$ is $\varphi$-aperiodic (say for the parameter $l_0=0$), then
\be
\label{ConditionGeod}
	v \in \bigcap_{\a \text{ is a closed geodesic in }M}\textbf{Bounded}_{\a}( |\a| + \varphi^{-1}(|\a| )).
\ee
Conversely, if $v$ satisfies condition \eqref{ConditionGeod} and if  $i_M \geq 2\d_0 $, 
then there exists a constant $c_0=c_0(M, \e_0)>0$ as well as  a minimal length $l_0> 2c_0 + 2\e_0$ and a shift $s_0>\e_0$ 
such that $\gamma_v$ satisfies \eqref{MTC} for all shifts $s \geq s_0$ and lengths $l\geq l_0 $ 
for the function
\be
\nonumber
   \tilde \varphi(l) = \varphi( l - 2c_0 - 2\e_0) - \e_0.
\ee
\end{theorem}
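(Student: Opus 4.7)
The plan is to prove the two implications separately, mirroring the strategy of Theorem \ref{ThmBounded} in continuous time, with the converse relying crucially on the metric closing lemma for CAT($-1$) spaces to be established as Proposition \ref{ClosingLemma}.

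\textbf{Forward direction.} Fix a closed geodesic $\alpha$ of period $|\alpha|$ and a time $t_0 \geq 0$, and set $L = \mathfrak{p}_\alpha(v, t_0)$. If $L < |\alpha|$ the desired bound is immediate, so assume $L \geq |\alpha|$. By the definition of penetration length (cf.\ \eqref{DistanceClose}) there exists $t \in [0, |\alpha|]$ such that $d(\gamma_v(t_0 + s'), \alpha(t + s')) \leq \e_0/2$ for every $s' \in [0, L]$. Using $\alpha(t + s' + |\alpha|) = \alpha(t + s')$ and the triangle inequality,
\[
  d(\gamma_v(t_0 + s'), \gamma_v(t_0 + |\alpha| + s')) \leq \e_0 \qquad \text{for all } s' \in [0, L - |\alpha|],
\]
i.e., $d_l(\phi^{t_0} v, \phi^{t_0 + |\alpha|} v) \leq \e_0$ with $l = L - |\alpha|$. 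Since every closed geodesic in $M$ has length at least $2 i_M > 2\e_0$, $\varphi$-aperiodicity applied with shift $|\alpha|$ gives $|\alpha| \geq \varphi(L - |\alpha|)$, and inverting yields $L \leq |\alpha| + \varphi^{-1}(|\alpha|)$.

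\textbf{Converse direction.} Assume $v$ satisfies \eqref{ConditionGeod} and that $d_l(\phi^{t_0} v, \phi^{t_0 + s} v) \leq \e_0$ for some $t_0 \geq 0$, shift $s \geq s_0$, and length $l \geq l_0$. The main input is Proposition \ref{ClosingLemma}: under $i_M \geq 2\d_0$, for a constant $c_0 = c_0(M, \e_0) > 0$ it produces a closed geodesic $\alpha$ of period $|\alpha| \in [s - \e_0,\, s + \e_0]$ together with a time $t_0'$ at which $\gamma_v$ shadows $\alpha$ within $\e_0/2$ on an interval of length at least $|\alpha| + l - 2c_0 - 2\e_0$; in particular $\mathfrak{p}_\alpha(v, t_0') \geq |\alpha| + l - 2c_0 - 2\e_0$. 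By hypothesis \eqref{ConditionGeod} this penetration length is bounded above by $|\alpha| + \varphi^{-1}(|\alpha|)$, whence
\[
  l - 2c_0 - 2\e_0 \;\leq\; \varphi^{-1}(|\alpha|) \;\leq\; \varphi^{-1}(s + \e_0).
\]
Applying the increasing function $\varphi$ and rearranging gives $s \geq \varphi(l - 2c_0 - 2\e_0) - \e_0 = \tilde\varphi(l)$, which is \eqref{MTC} for $\tilde\varphi$. The thresholds $l_0 > 2c_0 + 2\e_0$ and $s_0 > \e_0$ are chosen precisely so that the argument of $\varphi$ falls in its domain $(1,\infty)$ and the closing lemma is applicable.

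\textbf{Main obstacle.} The entire weight of the converse rests on Proposition \ref{ClosingLemma}: one must extract, from a purely $C^0$ near-closing condition on a single long orbit segment, a \emph{bona fide} closed geodesic whose length is pinned within $\e_0$ of $s$ and whose shadowing of $\gamma_v$ extends essentially throughout the interval $[t_0, t_0 + |\alpha| + l]$, with only $O(c_0)$ loss at each end. The proof should exploit the exponential convergence of geodesics in CAT($-1$) geometry, applying a fixed-point argument on the visual boundary to produce an axial isometry in $\pi_1(M)$ whose axis projects to $\alpha$; the injectivity-radius assumption $i_M \geq 2\d_0$ with $\d_0 = \log(1+\sqrt 2)$ is the threshold ensuring that the nearly-closing loop is homotopically nontrivial and therefore descends to a genuine closed geodesic of length close to $s$.
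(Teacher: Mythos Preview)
Your proof of the theorem is correct and follows essentially the same route as the paper: the forward direction is the same triangle-inequality argument via periodicity of $\alpha$, and the converse defers to Proposition~\ref{ClosingLemma} and extracts the inequality $l-2c_0-2\e_0 \leq \varphi^{-1}(|\alpha|)$ exactly as the paper does (the paper phrases the shadowing length as $L \geq s+l-2c_0-\e_0$ and then uses $|\psi|\leq s+\e_0$, which is equivalent to your formulation).

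One clarification is worth making about your ``Main obstacle'' paragraph. You describe the closing lemma as needing to \emph{produce} an axial isometry via a fixed-point argument on the visual boundary, but in the paper's setup the isometry $\psi\in\Gamma$ is handed to you for free: the condition $d_l(\phi^{t_0}v,\phi^{t_0+s}v)\leq\e_0$ in $M$ lifts (via \eqref{BeziehungLift}) to $d(\gamma(t_0+s+t),\psi(\gamma(t_0+t)))\leq\e_0$ in $\H^{n+1}$ for some deck transformation $\psi$, and the hypothesis $i_M\geq 2\d_0$ guarantees $|\psi|\geq 2i_M\geq 4\d_0$. So Proposition~\ref{ClosingLemma} is not about constructing $\psi$ but about showing that the axis $A_\psi$ of this given $\psi$ lies $\e_0/8$-close to $\gamma$ on $[c_0,s+l-c_0]$ and that $|\psi|$ is within $2\e_0$ of $s$; the paper proves this by direct estimates on the displacement function $d_\psi$ (Lemma~\ref{Translation}) combined with a thin-triangle shadowing lemma (Lemma~\ref{Neighbor}), rather than any boundary dynamics.
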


\noindent The second assertion states that $\gamma_v$ is $\tilde \varphi$-aperiodic up to the restriction that $s\geq s_0$.
We remark that, due to a technical argument given in \cite{SchroederWeil}, this implies $\gamma_v$ to be $\bar \varphi$-aperiodic for a suitable function $\bar \varphi$ and a length $\tilde l_0 \geq l_0$.

\begin{remark}
If  the parameter $l_0>0$ in the first assertion, then simply replace $\varphi^{-1}(|\a| )$ by $\max\{ l_0, \varphi^{-1}(|\a| )\}$ in \eqref{ConditionGeod}.
We also believe that, again, the requirement $i_M \geq 2\d_0 $ is only a technical requirement and  can be removed.

Recall that by \eqref{IndependentEps}, any function $\varphi$ for which $\varphi$-aperiodic geodesics exist is bounded by $\varphi(l)\leq c\cdot e^{nl}$, where $c=c(M, \e_0)$.
On the other hand, in view of Theorem \ref{ExistenceGeodesic} and Condition \eqref{ConditionGeod},
consider the function $\varphi_{\d} (l)\equiv e^{\d n l}$ with $\d<1$ for which $\varphi_{\d}^{-1}(l) = \frac{1}{\d n}\log(l)$.
\end{remark}

Before proving the Theorem, we connect the above to the theory of Diophantine approximation in negatively curved spaces.
\\

\subsubsection{Diophantine approximation in $\H^{n+1}$}
\label{GeodDA}
For a more general setting, Hersonsky, Parkkonen and Paulin, see \cite{HersonskyPaulin,HersonskyPaulin2,Parkkonen2},
developed a model of Diophantine approximation in the context of negatively curved spaces.
In order to relate the concept of $\varphi$-aperiodic geodesics to this model, and to keep things simple and short, 
let us refer to  \cite{HersonskyPaulin,HersonskyPaulin2,Parkkonen2} for details and only remark the following:
in fact, for our setting let us consider a ray $\gamma$ in $\H^{n+1}$, a lift of a ray $\gamma_v$ in $M$,  starting in a base point $o \in \H^{n+1}$, and the collection $\cC \equiv \{\a_k : k \in \Z\}$ of lifts of a fixed closed geodesic $\a$ in $M$.
Each lift $\a_k$ determines a distance $h_k \equiv d(o, \a_k)$, called a \emph{height}, and two points $\partial_\infty \a_k =\{\a_k(-\infty), \a_k(\infty)\}$, called \emph{resonant points}, in the visual boundary $\partial_\infty \H^{n+1}=S^n$ of $ \H^{n+1}$, where the collection $\partial_\infty \cC \equiv \{\partial_\infty \a_k  \}$ gives a dense set in $S^n$.
The collection $(\partial_\infty \cC, \{h_k\})$ of resonant points and heights gives rise to a model of Diophantine approximation in $S^n$.
We will make use only of a dynamical correspondence in their model, that is,
properties of the point $\xi \equiv \gamma(\infty) \in S^n$ in terms of approximation by resonant points in $\partial_\infty \cC$ can be expressed in terms of the penetration lengths of $\gamma$ in the $\e$-neighborhoods $\cN_\e(\a_k)$ (a convex connected set) of the lifts $\a_k \in \cC$.
In particular, if each penetration length $p_k \equiv | \gamma(\R^+) \cap\cN_\e(\a_k) |$ of $\gamma$ in $\cN_\e(\a_k)$ is bounded by a constant $L<\infty$, then $\xi$ is called \emph{badly approximable}.

Hence, in our setting above, define for $v\in SM_o$,
\be
\nonumber
	\frak{p}_\a(v) \equiv \sup_{t_0 \geq 0} \mathfrak{p}_{\alpha} (v, t_0) \in [0, \infty],
\ee
which determines  the \emph{approximation constant} $c_{\a}(v) \equiv e^{- \frak{p}(v)}$  of $v$ with respect to the closed geodesic $\a$. 
It follows from \cite{HersonskyPaulin2} that for almost all $v \in SM_o$ (spherical measure) we have $c_\a(v)= 0$ (and $\frak{p}_\a(v) = \infty$).
However, by the above, the set \textbf{Bad}$_\a$ of vectors $v$ with $c_{\a}(v) >0$, which are called \emph{badly approximable}, is of Hausdorff-dimension $n$.
Moreover, if \textbf{Bad}$_\a(c)$ is the set of vectors with $c_\a(v)\geq c>0$, then Condition \eqref{ConditionGeod} reads
\be
\nonumber
	v \in \bigcap_{\a \text{ is a closed geodesic in }M}\textbf{Bad}_{\a}( e^{- (|\a| + \varphi^{-1}(|\a| )} )
\ee
and Theorem \ref{ThmClassificationGeodesics} may be stated in terms of this condition.
\\


\subsubsection{Proof of Theorem \ref{ThmClassificationGeodesics}}
\label{GeodProof}

The first assertion of the theorem is straightforward, uses the idea of the proof of Theorem \ref{BoundedLength}, and we only need to be careful with the adjusted definition of the penetration times.
We therefore skip the details.

For the second part, 
let us recall that $M=\H^{n+1}/\Gamma$, where $\Gamma$ is a cocompact torsion-free discrete subgroup of the isometry group of the hyperbolic space $\H^{n+1}$.
Note that every isometry $\psi \in \Gamma$  is of hyperbolic type and can be written as $\psi = \psi_0^k$, $k \in \Z$, with $\psi_0$ primitive.
Every $\psi $ determines an axis $A_\psi=A_{\psi_0}$ (the unique geodesic line which is invariant under $\psi$) and hence a closed geodesic $\alpha= A_{\psi_0}/\Gamma_0 \subset M$,
where $\Gamma_0 \equiv  \langle \psi_0 \rangle \subset \Gamma$.
Conversely, every lift of a closed geodesic in $M$ is an axis and determines a $\psi_0 \in \Gamma$ (or $\psi_0^{-1}$) as above.
For a hyperbolic isometry $\psi \in \Gamma$, denote by $\lvert \psi \rvert \equiv d(x, \psi(x))$, for any $x \in A_\psi$, the translation length of $\psi$ along its axis.
Moreover, for $\e>0$ anda geodesic line $\sigma : \R\to \H^{n+1}$,
note that the $\e$-neighborhood $\cal{N}_{\e}(\sigma)$ of $\sigma$ is a connected convex set.

We  first need to establish a `metric version' of the closing lemma, which implies the strong $\d$-closing property in our context.
Up to the authors' knowledge, this version does neither exist explicitly nor follows easily from a result in the literature so far.
Note that $\H^{n+1}$ is a proper geodesic CAT(-1) space and that the lemma holds even in this setting.

\begin{proposition}[Metric Closing Lemma]
\label{ClosingLemma}
Given $\e_0>0$, there exist a constant $c_0=c_0(\e_0) \leq 2\d_0 + \e_0 - \log(\e_0/8)$,
 a minimal shift $s_0 = 4\e_0 + 6\d_0 $ and a minimal length $l_0=l_0(\e_0) \geq 4\d_0+ \e_0$ such that $s_0 + l_0 \geq 2c_0$ with the following property:

For $l\geq l_0$ and $s>s_0 $, let $\gamma : [0,s+l] \to \H^{n+1}$ be a geodesic segment such that,
\be
\nonumber 
	d(\gamma(s+t),\psi (\gamma(t)) \leq \e_0 \ \ \ \ \text{ for all } t\in [0,l],
\ee
where $\psi \in \Gamma$ is of hyperbolic type with $\lvert \psi \rvert \geq 4 \d_0$.
Then, $s-2\e_0 \leq \lvert \psi \rvert \leq s+\e_0$ and 
\be
\label{ContainedInNbh}
	\gamma([c_0, s+l-c_0]) \subset \cal{N}_{\e_0/8}(A_{\psi}).
\ee
\end{proposition}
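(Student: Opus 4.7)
My plan is to structure the proof in three parts: (i) showing $\gamma$ is trapped in the $\e_0/8$-tube around $A_\psi$ on the middle interval $[c_0,s+l-c_0]$; (ii) deriving the upper bound $\lvert\psi\rvert\le s+\e_0$ directly from the triangle inequality; and (iii) deriving the lower bound $\lvert\psi\rvert\ge s-2\e_0$ from (i) by projecting a single interior point of $\gamma$ onto $A_\psi$.

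For the geometric core (i), the hypothesis rewrites as: the two geodesic segments $\sigma(t)=\psi(\gamma(t))$ and $\tau(t)=\gamma(s+t)$, parametrized by $t\in[0,l]$, stay $\e_0$-close throughout. Equivalently, the extension $\bar\gamma$ of $\gamma$ to a complete geodesic line and its $\psi$-translate $\psi(\bar\gamma)$ remain within $\e_0$ of each other over an interval of length $l$ (with a parameter shift by $s$). In a proper CAT$(-1)$ space, two distinct geodesic lines can stay $\e_0$-close only on a bounded interval unless they share at least one endpoint at infinity, the threshold length being of order $\delta_0$. Since $\psi$ is hyperbolic with fixed points exactly $\partial_\infty A_\psi$, any endpoint shared by $\bar\gamma$ and $\psi(\bar\gamma)$ must lie in $\partial_\infty A_\psi$, forcing $\bar\gamma$ to be asymptotic to $A_\psi$. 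The fellow-traveler property in $\H^{n+1}$ then yields exponential convergence $r(t)\equiv d(\gamma(t),A_\psi)\le C\,\e_0\, e^{-\min(t-c_0,\,s+l-c_0-t)}$ on the middle, and forcing this below $\e_0/8$ gives the stated bound $c_0\le 2\delta_0+\e_0-\log(\e_0/8)$. The hypothesis $\lvert\psi\rvert\ge 4\delta_0$ keeps the exponential trapping nontrivial, while the choice of $l_0$ constrained by $s_0+l_0\ge 2c_0$ guarantees that both boundary layers of width $c_0$ fit inside $[0,s+l]$.

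Step (ii) is immediate from the hypothesis at $t=0$: $\lvert\psi\rvert\le d(\gamma(0),\psi\gamma(0))\le d(\gamma(0),\gamma(s))+d(\gamma(s),\psi\gamma(0))\le s+\e_0$. For step (iii), pick any interior $t_0\in[c_0,\min(l,s+l-c_0)]$, nonempty by the choice of $l_0$. By (i), $r(t_0)\le\e_0/8$; let $p$ be the nearest-point projection of $\gamma(t_0)$ onto $A_\psi$, so $d(\gamma(t_0),p)=r(t_0)\le\e_0/8$. Since $\psi$ is an isometry preserving $A_\psi$ and translating along it by $\lvert\psi\rvert$, we have $d(\psi\gamma(t_0),\psi p)=d(\gamma(t_0),p)\le\e_0/8$ and $d(p,\psi p)=\lvert\psi\rvert$, hence $d(\gamma(t_0),\psi\gamma(t_0))\le\lvert\psi\rvert+\e_0/4$. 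On the other hand, the hypothesis at $t_0$ (still in $[0,l]$) gives $d(\gamma(t_0),\psi\gamma(t_0))\ge d(\gamma(t_0),\gamma(s+t_0))-d(\gamma(s+t_0),\psi\gamma(t_0))\ge s-\e_0$. Chaining the two yields $\lvert\psi\rvert\ge s-5\e_0/4\ge s-2\e_0$.

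The main obstacle is step (i): converting the qualitative CAT$(-1)$ fellow-traveler statement into the explicit constants of the lemma. That two geodesic lines close over a long interval must share an endpoint at infinity is standard; what requires care is pinning down the threshold length past which this sharing is forced, and quantifying the exponential decay rate that sharpens initial $\e_0$-proximity into $\e_0/8$-proximity to the axis. The term $2\delta_0$ in the bound on $c_0$ absorbs the initial thin-triangles defect, the $-\log(\e_0/8)$ accounts for the exponential squeeze, and the extra $\e_0$ is the triangle-inequality slack in the hypothesis. Since the argument is purely CAT$(-1)$-flavored, the same proof should go through in variable negative curvature, consistent with the paper's remarks elsewhere.
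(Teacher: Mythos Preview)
Your steps (ii) and (iii) are fine and essentially match the paper's treatment of the translation-length bounds. The real problem is step (i), where there is a genuine gap.

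You extend $\gamma$ to a complete line $\bar\gamma$ and claim that since $\bar\gamma$ and $\psi(\bar\gamma)$ stay $\e_0$-close over an interval of length $l$, they must share an endpoint at infinity once $l$ exceeds a threshold ``of order $\d_0$''. This is not correct: two distinct geodesic lines in $\H^{n+1}$ sharing no endpoint at infinity can stay within $\e_0$ of each other on an interval of arbitrary finite length (take lines whose minimal distance is sufficiently small). There is no finite threshold after which $\e_0$-closeness on a bounded interval forces a common point at infinity. Moreover, even if such a conclusion were available, sharing a \emph{single} endpoint yields exponential convergence only toward that end, not the two-sided decay $r(t)\le C\,\e_0\, e^{-\min(t-c_0,\,s+l-c_0-t)}$ you write down. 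The passage through $\partial_\infty$ is therefore both unjustified and, as formulated, too weak to give the stated $c_0$.

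The paper's argument for (i) avoids the boundary entirely and works with finite data. The displacement estimate (Lemma~\ref{Translation}) applied at $x=\gamma(0)$ gives $2d(x,A_\psi)-4\d_0\le d_\psi(x)\le s+\e_0$, hence a coarse bound $d(x,A_\psi)\le s/2+2\d_0+\e_0/2$, and similarly for $\gamma(l)$ and $\gamma(s+l)$. This bound, which grows with $s$, is then upgraded to the $s$-independent bound $d(\gamma(0),A_\psi),\,d(\gamma(s+l),A_\psi)\le 2\d_0+\e_0$ via a direct $\d_0$-thin-triangles argument in the triangles $(x,\mathrm{pr}(x),z)$ and $(\mathrm{pr}(x),\mathrm{pr}(z),z)$; here the hypotheses $s>s_0$, $l\ge l_0$ and $|\psi|\ge 4\d_0$ are actually used. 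Only then is Lemma~\ref{Neighbor} applied with $D=2\d_0+\e_0$ and target $\e_0/8$, yielding \eqref{ContainedInNbh} with $c_0\le D-\log(\e_0/8)$. What your sketch is missing is precisely the first of these steps: a finite, quantitative bound on the distance from the \emph{endpoints} of $\gamma$ to the axis $A_\psi$, obtained from the displacement function rather than from asymptotic behaviour.
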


Assuming the Proposition for the moment, we are able prove the Theorem.
We want to verify \eqref{MTC} in our context.
Therefore, let $\gamma$ be a lift of $\gamma_v$ and  assume that 
\be
\nonumber
	d(\gamma(t_0+s+t), \psi( \gamma(t_0 + t ))\leq \e_0 \ \ \ \ \forall t\in [0,l]
\ee
for a shift $s>s_0$ and length $l>l_0 + 2 c_0 + 2\e_0$ ($s_0$ and $l_0$ as in the Proposition above) and some $\psi\in \Gamma$.
Note that $\psi \in \Gamma$ is of hyperbolic type
with  $\lvert \psi \rvert  \geq 2i_M \geq 4 \d_0$.
Applying Proposition \ref{ClosingLemma}  we get that $s-2\e_0 \leq \lvert \psi \rvert \leq s+\e_0$ and 
\be
	\nonumber
	\gamma([t_0 + c_0, t_0 + s+l-c_0]) \subset \cal{N}_{\e_0/8}(A_{\psi}).
\ee
It is readily shown that this implies 
\be
\nonumber 
	d( \gamma(t_0 + c_0 + t) , A_{\psi}(\bar t_0 + t)) \leq \e_0/2
\ee 
for at least all $t \in [0, L]$ with $L \geq s+l-2c_0 - \e_0$ (for a suitable parametrization of $A_\psi$ and a time  $\bar t_0$).
Using Conditon \eqref{ConditionGeod} we get
\be
\nonumber
	s+ l - 2c_0 - \e_0 \leq L \leq \lvert \psi \rvert +  \varphi^{-1}(\lvert \psi \rvert ),
\ee
and since $\lvert \psi \rvert \leq s+\e_0$, we obtain that
\bea
\nonumber
	s \geq \lvert \psi \rvert - \e_0 \geq  \varphi( l - 2c_0 - 2\e_0) - \e_0 =  \tilde \varphi(l).
\eea
This finishes the proof of the Theorem, up to replacing $l_0$ by $l_0 + 2c_0 + 2\e_0$.
\\


\subsubsection{Proof of the Metric Closing Lemma}
\label{GeodClosingLemma}
In order to prove the proposition, we need the following Lemmata.
In the following,  $Z$ denotes a proper geodesic CAT(-1) space and we let $\delta_0$ be the constant such that $Z$ is a $\d_0$-hyperbolic space;
recall that $\H^{n+1}$ is  $\log(1 + \sqrt{2})$-hyperbolic.
For two points $x$ and $y \in Z$, let $[x, y]$ (identified with its image) denote the unique geodesic segment from $x$ to $y$.

\begin{lemma}[\cite{Weil2}, Lemma 3.19]
\label{Neighbor}
Let $D \geq \e>0$.
Let $\gamma$ and $\alpha$ be two geodesics in $Z$ such that $d(\gamma(-L) , \alpha) \leq D$ and $d(\gamma(L), \alpha) \leq D$, 
where $L\geq 2(D - \log(\e))$.
Then there exists a constant $c=c(D, \e) \leq D - \log(\e)$ such that  $\gamma([-L+c, L-c]) \subset \cal{N}_{\e}(\alpha)$.
\end{lemma}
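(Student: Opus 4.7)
The plan is to analyze the distance function $f(t) \equiv d(\gamma(t),\alpha)$ on $[-L,L]$, where the boundary data $f(\pm L) \leq D$ must be propagated inward via an exponential-decay estimate that forces $f(t) \leq \e$ on a large symmetric sub-interval. First I would introduce the nearest-point projections $P \equiv \pi_\alpha(\gamma(-L))$ and $Q \equiv \pi_\alpha(\gamma(L))$, which are well-defined and $1$-Lipschitz since $\alpha$ is a complete geodesic in a CAT$(-1)$ space. By hypothesis, $d(\gamma(-L),P), d(\gamma(L),Q) \leq D$, and by the triangle inequality $d(P,Q) \leq 2L + 2D$; on the other hand $d(P,Q)$ is close to $2L$ up to an additive $2D$ error. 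In particular the quadrilateral with vertices $\gamma(-L), P, Q, \gamma(L)$ is a ``long thin'' object whose two short sides have length $\leq D$ and whose base on $\alpha$ has length comparable to $2L$.

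The core of the proof is comparison with the hyperbolic plane $\mathbb{H}^2$. I would build a Saccheri-type model quadrilateral $\bar \gamma(-L), \bar P, \bar Q, \bar \gamma(L) \subset \mathbb{H}^2$ with matching side lengths and base length $d(P,Q)$. Splitting $\Delta$ along the diagonal $[\gamma(-L),Q]$ and applying the CAT$(-1)$ triangle comparison to each of the two resulting triangles yields $f(t) \leq \bar f(t)$, where $\bar f(t)$ is the distance from the corresponding point on $[\bar \gamma(-L),\bar \gamma(L)]$ to $\bar \alpha$. The explicit hyperbolic trigonometry of such a quadrilateral gives an inequality of the shape $\sinh \bar f(t) \leq \sinh(D)/\cosh(L - |t|)$, so $\bar f(t) \leq \e$ as soon as $L-|t| \geq D - \log(\e) + O(1)$. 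Setting $c \equiv D - \log(\e)$ (absorbing the $O(1)$ into the inequality $c \leq D - \log(\e)$ by choosing the explicit constant appropriately), the conclusion $\gamma([-L+c,L-c]) \subset \mathcal{N}_\e(\alpha)$ follows; the standing hypothesis $L \geq 2(D - \log \e)$ is exactly what guarantees that the sub-interval is non-empty.

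An alternative, essentially equivalent route avoids the explicit quadrilateral altogether: in a CAT$(-1)$ space the function $\cosh f$ along a geodesic is ``super-convex'' in the sense that it satisfies a differential inequality of the form $(\cosh f)'' \geq \cosh f$ in a comparison sense. Integrating this inequality with boundary data $\cosh f(\pm L) \leq \cosh D$ gives $\cosh f(t) \leq \cosh(D)\,\cosh(t)/\cosh(L)$ (up to the obvious asymmetric correction near the endpoints), leading to the same exponential decay $f(t) \lesssim D e^{-(L - |t|)}$ in the regime $f(t) \ll 1$, and hence to the same bound on $c$.

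The main obstacle is keeping the additive constants honest, so that the final $c$ satisfies the stated $c \leq D - \log(\e)$ rather than merely $c \leq D - \log(\e) + \mathrm{const}$. I would handle this by working with $\sinh(f/2)$ (or another convenient monotone function of $f$) for which the CAT$(-1)$ convexity inequality has the cleanest form, and by choosing the comparison quadrilateral symmetrically so that the model calculation is explicit. The secondary subtlety is the projection step: in CAT$(-1)$ the projections $P, Q$ are unique only when the relevant geodesic is complete, which is given here, so no issue arises. With these technical points in place the proof is a direct exponential-decay estimate for a convex function with controlled boundary data.
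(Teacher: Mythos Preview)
The paper does not prove this lemma; it merely quotes it from \cite{Weil2} (the citation in the lemma header) and uses it as a black box inside the proof of Proposition~\ref{ClosingLemma}. So there is no ``paper's own proof'' to compare against.

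As for your proposal itself: the strategy is the standard one for CAT$(-1)$ spaces and is essentially correct. The convexity route is the cleanest: in $\mathbb{H}^2$, with Fermi coordinates $(s,r)$ along $\alpha$ (metric $dr^2+\cosh^2(r)\,ds^2$), one checks that $u(t)=\sinh d(\gamma(t),\alpha)$ satisfies $u''=u$ along any unit-speed geodesic $\gamma$ that stays on one side of $\alpha$; solving with boundary data $u(\pm L)\leq\sinh D$ gives $u(t)\leq\sinh(D)\,\cosh(t)/\cosh(L)$, and the CAT$(-1)$ comparison (applied via the two triangles obtained from the diagonal, exactly as you describe) transfers this inequality to $Z$. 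From $\sinh f(t)\leq\sinh(D)\,e^{-(L-|t|)}$ and $\sinh D\leq e^{D}/2$, $\sinh\varepsilon\geq\varepsilon/2$ (using $D\geq\varepsilon>0$), one gets $f(t)\leq\varepsilon$ as soon as $L-|t|\geq D-\log\varepsilon$, so the bound $c\leq D-\log\varepsilon$ really does come out without an additive constant; your worry about the $O(1)$ is unfounded once you track $\sinh$ rather than the raw distance. The only genuine technical point you should make explicit is that the comparison inequality for the distance-to-a-convex-set function passes through the triangle decomposition of the quadrilateral (CAT$(-1)$ gives triangle comparison, not quadrilateral comparison directly), but you already flag this.
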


Moreover, we need estimates for the displacement function $d_{\psi}(x) \equiv  d(x, \psi(x))$.

\begin{lemma}\label{Translation}
For $\psi \in \Gamma$ hyperbolic with $\lvert \psi \rvert \geq 4 \delta_0$  
and $x\in Z$, we have
\be
\nonumber
	\max\{ 2d(x, A_{\psi}), \lvert \psi \rvert \} - 4\delta_0 \leq d_{\psi}(x) \leq \lvert \psi \rvert + 2d(x, A_{\psi}).
\ee
\end{lemma}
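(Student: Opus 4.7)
The plan is to set $y \equiv \pi_{A_\psi}(x)$, the nearest-point projection of $x$ onto the axis $A_\psi$ (which exists and is unique in a CAT(-1) space because $A_\psi$ is closed and convex), and to write $r \equiv d(x, A_\psi) = d(x,y)$. Since $\psi$ is an isometry preserving the axis $A_\psi$ setwise and translating it by $|\psi|$, one has $\psi(y) \in A_\psi$ with $d(y, \psi(y)) = |\psi|$, and $\psi(y)$ is the nearest-point projection of $\psi(x)$ onto $A_\psi$ with $d(\psi(y), \psi(x)) = r$.

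For the upper bound, the triangle inequality applied along the piecewise geodesic $x \to y \to \psi(y) \to \psi(x)$ immediately gives
\[
	d_\psi(x) \leq d(x,y) + d(y, \psi(y)) + d(\psi(y), \psi(x)) = r + |\psi| + r.
\]
For the lower bound I would handle the two terms in the maximum separately. The estimate $d_\psi(x) \geq |\psi|$ is standard: for a hyperbolic isometry of a CAT(-1) space, $|\psi| = \inf_{z \in Z} d(z, \psi(z))$ and this infimum is attained precisely on $A_\psi$. The remaining inequality $d_\psi(x) \geq 2r - 4\delta_0$ is the main substance.

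For this main step the idea is to exploit the $\pi/2$ angle condition at the projection foot together with the CAT(-1) comparison inequality (hyperbolic law of cosines in the comparison triangle). Applied to the triangles $(x,y,\psi(y))$ and $(x,\psi(y),\psi(x))$, this yields the hyperbolic-quadrilateral bound
\[
	\cosh\bigl(d_\psi(x)\bigr) \geq \cosh^2(r)\cosh(|\psi|) - \sinh^2(r),
\]
which can be rewritten as $\cosh^2(r)(\cosh(|\psi|)-1) + 1$. Since the hypothesis $|\psi| \geq 4\delta_0 = 4\log(1+\sqrt 2)$ forces $\cosh(|\psi|)-1$ to be bounded below by a positive absolute constant (in fact $\geq \tfrac{1}{2}\cosh(|\psi|)$), taking logarithms gives $d_\psi(x) \geq 2r + |\psi| - c$ for an explicit constant $c$, and a direct check shows $c \leq 4\delta_0$. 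Combining with $d_\psi(x) \geq |\psi|$ and using $\min\{2r,|\psi|\} \geq 0$ then yields
\[
	d_\psi(x) \geq 2r + |\psi| - 4\delta_0 \geq \max\{2r, |\psi|\} - 4\delta_0,
\]
as required.

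The main obstacle is the passage from the pointwise CAT(-1) comparison for a triangle to the quadrilateral estimate above; this requires applying the angle-$\pi/2$ condition twice (once at $y$ for the configuration $(x,y,\psi(y))$, and a second time exploiting symmetry under $\psi$ at $\psi(y)$) and keeping track of constants. The assumption $|\psi| \geq 4\delta_0$ is used precisely to guarantee that the projection feet $y$ and $\psi(y)$ are sufficiently separated on $A_\psi$ for this linearized quadrilateral bound to deliver the additive constant $4\delta_0$ rather than an uncontrolled correction; without it, the two projection feet could coincide and the $2r$-term would be lost.
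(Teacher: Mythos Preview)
Your plan is sound and reaches the lemma, but by a genuinely different route from the paper. The paper's argument is a thin-triangle case analysis: it takes the midpoint $m$ of $[y,\psi(y)]$ on the axis and, using $\delta_0$-slimness of the triangles $(x,y,\psi(y))$ and $(x,\psi(y),\psi(x))$ together with the $1$-Lipschitz nearest-point projection, locates a point $\bar x$ on $[x,\psi(x)]$ within $2\delta_0$ of $m$; the two pieces $d(x,\bar x)$ and $d(\bar x,\psi(x))$ then each dominate $\max\{r,|\psi|/2\}-2\delta_0$, and summing gives the bound. Your approach instead uses the CAT($-1$) comparison to obtain the Saccheri-type inequality $\cosh d_\psi(x)\geq \cosh^2 r\,\cosh|\psi|-\sinh^2 r$, which after linearisation yields the stronger $d_\psi(x)\geq 2r+|\psi|-4\log 2$ (and indeed $4\log 2<4\delta_0$). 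The paper's method is more elementary and works in any $\delta_0$-hyperbolic CAT(0) space; yours gives a tighter intermediate inequality and makes the role of the hypothesis $|\psi|\geq 4\delta_0$ transparent as the threshold for the $\cosh$-to-linear step.

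One point to tighten: the two-triangle derivation of the Saccheri bound is not just ``symmetry under $\psi$''. The right angle you control at $\psi(y)$ is $\angle_{\psi(y)}(y,\psi(x))$, not the angle $\angle_{\psi(y)}(x,\psi(x))$ that appears in the second comparison triangle. To close this you need the triangle inequality for Alexandrov angles together with the law of sines in the first comparison triangle, which gives $\cos\angle_{\psi(y)}(x,\psi(x))\leq \sinh r/\sinh d(x,\psi(y))$ and hence exactly $\cosh d_\psi(x)\geq \cosh d(x,\psi(y))\cosh r-\sinh^2 r$; alternatively, invoke Reshetnyak majorization of the quadrilateral. Either route works, but the step deserves to be spelled out.
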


\begin{proof}
Note that if $pr : X \to A_{\psi}$ denotes the closest point projection on the convex closed set $A_{\psi}$, then $pr(\psi (x)) = \psi (pr(x))$. 
Hence, $d(x, A_{\psi}) = d(\psi(x), A_{\psi})$ and $d(pr(x), pr(\psi(x))= \lvert \psi \rvert$.
Therefore, the upper bound follows easily.

Let $m \in [pr(x),pr(\psi(x))]$ such that $d(m, pr(x))= \lvert \psi \rvert/2$.
Note that if $m$ is $\delta_0$-close to $[x, pr(x)]$, then $\lvert \psi \rvert/2 = d(pr(x), m) < \delta_0$.
Hence, assume there is a point  $\bar m \in [x, pr(\psi(x))]$  which is $\delta_0$-close to $m$. 
If $\bar m$ is in turn $\delta_0$-close to $[x, \psi x]$, say to the point $\bar x$, then let $d_1=d(x, \bar x)$ and $d_2=d(\bar x, \psi(x))$.
Considering the triangle $(x, pr(x), m)$ with $\angle_{pr(x)}(x, m)\geq \pi/2$, we have
\bea 
\nonumber
	d_1 		&\geq& 	d(x, m) - 2 \delta_0	 
	\\ \nonumber
			&\geq& \max\{ d(x, pr(x)), d(pr(x), m) \} - 2\delta_0 = \max\{ d(x, A_{\psi}), \lvert \psi \rvert/2 \} - 2\delta_0.
\eea 
The same lower bound holds for $d_2$ which shows the claim in this case.

If there exists no such point $\bar x$, then $\bar m$ is $\delta_0$-close to a point $\bar y$ in $[pr(\psi(x)), \psi(x)]$
and, since $\angle_{pr(\psi(x))}(\bar y, m) \geq \pi/4$, we have $\lvert \psi \rvert/2= d(m, pr(\psi(x)) \leq d(m, \bar y) < 2\delta_0$.
\end{proof}

\begin{proof}[Proof of Proposition \ref{ClosingLemma}]
Set $x=\gamma(0)$, $y=\gamma(l)$ and $z=\gamma(s+l)$.
Let $n\in \N$ be the minimal integer such that $\lvert \psi^n \rvert = n \lvert \psi \rvert \geq 4\delta_0$.
Note that since $\lvert \psi \rvert \geq2 i_M>0$ we have $n \leq \lceil 2\d_0 /i_M \rceil$.
By Lemma \ref{Translation}, 
\bea 
\nonumber
	n(s + \e) &\geq&  n(d(\gamma(0), \gamma(s)) + d(\gamma(s), \psi(\gamma(0)) ) 
		\\ \nonumber
		&\geq& n d_{\psi}(x) \geq d_{\psi^n}(x)
		\\ \nonumber
		&\geq& \max\{ 2d(x, A_{\psi}), n\lvert \psi \rvert \} -4 \d_0 \geq 2d(x, A_{\psi}) -4 \d_0 .
\eea
Hence, $d(x, A_{\psi}) \leq D_s = D(s, \e, i_M)$ and analogously, $d(y, A_{\psi}) \leq D_s$.
Moreover, 
\be
\nonumber
	d(\gamma(s+l), A_{\psi}) \leq d(\gamma(s+l), \psi(\gamma(l)) + d(\psi(\gamma(l)), A_{\psi}) \leq D_s + \e.
\ee
Thus, using that $\lvert \psi \rvert \geq 4\d_0$, hence $n=1$, we already have for $d_1\equiv d(\gamma(0), A_{\psi})$ and $d_2 \equiv d(\gamma(s+l), A_{\psi})$ the bounds
\be
\label{FirstBound}
	d_i \leq D_s + \e \leq \frac{1}{2} (s+\e) + 2\d_0 + \e = s/2 + 2\d_0 + \e/2 \equiv s/2 + c_1,
\ee
and we claim, when $s >  2c_1 + 2\d_0 = s_0$,  that $d_i \leq 2\d_0 + \e$.

In fact, note first that, up to reversing the orientation of $\gamma$ and using $\psi^{-1}$ instead of $\psi$ (and consider the reversed situation), we may assume that $d_1 \geq d_2$.
Let $x_s \equiv \gamma(s)$ and denote by $pr : Z \to A_{\psi}$ again the closest point projection onto $A_{\psi}$.
Considering the geodesic triangle $(x, pr(x), z)$, 
for every point  $y \in[x, pr(x)]$ we have using \eqref{FirstBound} that 
\be
\nonumber
	d(x_s,y) \geq d(x_s,x ) - d(x, y) \geq s - d_1 \geq s - s/2 - c_1 > \d_0,
\ee
and there must exists a point $\bar x_s \in [pr(x), z]$ with $d(x_s, \bar x_s) \leq \d_0$. 
Now consider the geodesic triangle $(pr(x), pr(z), z)$.
If there exists a point on $[pr(x), pr(z)]$ which is $\d_0$-close to $\bar x_s$, hence $d(x_s, A_{\psi}) \leq 2\d_0$, we have 
\be
\nonumber
	d_2 \leq d_1 = d(x, A_{\psi})= d(\psi(x), A_{\psi}) \leq d(\psi(x), x_s) + d(x_s, A_{\psi}) \leq \e + 2\d_0,
\ee
hence the claim.
Thus, let $m \in [z, pr(z)]$ be a point with $d(\bar x_s, m) \leq \d_0$.
Clearly, this requires
\be
\nonumber
	d(z, m) \geq d(z, x_s) - 2\d_0 = l-2\d_0.
\ee
However, as above, we have
\bea
	\nonumber
	d_1 &\leq& d(\psi(x), x_s) + d(x_s, A_{\psi}) 
	\\ \nonumber	
	&\leq& d(x_s, m) +   d(m, pr(z)) +  \e  
	\\ \nonumber
	&\leq&  2\d_0 + \e + (d_2 - d(m, z))  \leq 2\d_0 + \e + d_1 - (l - 2\d_0),
\eea
which is a contradiction whenever $l\geq 4\d_0 + \e$. This finishes the claim.

Hence, by Lemma \ref{Neighbor}, there exists a constant $c_0 \leq 2\d_0 + \e  - \log(\e/8)$ 
such that \eqref{ContainedInNbh} holds.
Finally, since 
\be	
\nonumber
	s-\e \leq d_{\psi}(\gamma( c_0)) \leq \lvert \psi \rvert + 2d(\gamma(c_0),A_{\psi}) \leq \lvert \psi \rvert + \e,
\ee
and  $\lvert \psi\rvert \leq d_{\psi}(x) \leq s+\e$,
we have $s- 2\e \leq \lvert \psi \rvert \leq  s+\e$.
This finishes the proof.
\end{proof}


\bibliographystyle{amsplain}

\bibliography{cup_ref.bib}

\end{document}